\documentclass[a4,11pt]{article}
%%This version is version 2. We extend the results Theorem 1.2 (3). 

% パッケージ %%%%%%%%%
\usepackage{amsmath,amssymb,amscd,amsthm}
\usepackage[all]{xy}
\usepackage{enumerate}
\usepackage[dvips]{graphicx}
\usepackage{wrapfig}

%%% 定理関連の書式 %%%%%
\theoremstyle{plain}
\numberwithin{equation}{section}
\newtheorem{thm}{Theorem}[section]
\newtheorem{prop}[thm]{Proposition}
\newtheorem{cor}[thm]{Corollary}
\newtheorem{lem}[thm]{Lemma}

\theoremstyle{definition}
\newtheorem{dfn}[thm]{Definition}
\newtheorem{ex}[thm]{Example}
\newtheorem{rmk}[thm]{Remark}

% \addtocounter{section}{-1}
%%%%% 記号関連 %%
\def\rank{\mathop{\mathrm{rank}}\nolimits}
\def\dim{\mathop{\mathrm{dim}}\nolimits}
\def\Im{\mathop{\mathrm{Im}}\nolimits}

\def\Hom{\mathop{\mathrm{Hom}}\nolimits}
\def\hom{\mathop{\mathrm{hom}}\nolimits}
\def\Ext{\mathop{\mathrm{Ext}}\nolimits}

\def\<{{\langle}}
\def\>{{\rangle}}

\def\Aut{\mathop{\mathrm{Aut}}\nolimits}
\def\Stab{\mathop{\mathrm{Stab}}\nolimits}

\def\+{\mathop{\oplus}\nolimits}

%% \mathbf,\mathbb,\mathfrac,\mathcal等 %%%%%%

\newcommand{\mf}[1]{{\mathfrak{#1}}}

\newcommand{\bb}[1]{{\mathbb{#1}}}
\newcommand{\mca}[1]{{\mathcal{#1}}}
\newcommand{\mr}[1]{{\mathrm{#1}}}

%%%%%%%%%%%%%%%%%%%
\title{Stability of Gieseker stable sheaves on K3 surfaces in the sense of Bridgeland and some applications}
\author{Kotaro Kawatani\footnote{2010 Mathematics Subject Classification. Primary 14F05, 14J28, Secondary 14J60, 14J10}}
%{Graduate School of Mathematics, Osaka University, Osaka 563-0043, Japan.}
%{e-mail: kawatani@cr.math.sci.osaka-u.ac.jp} 
\date{}
 
\begin{document}
\maketitle

\begin{abstract}
We show that some Gieseker stable sheaves on a projective K3 surface $X$ are stable with respect to a stability condition of Bridgeland on the derived category of $X$ if the stability condition is in explicit subsets of the space of stability conditions depending on the sheaves. 
Furthermore we shall give two applications of the result. 
As a part of these applications, we show that the fine moduli space of Gieseker stable torsion free sheaves on a K3 surface with Picard number one is the moduli space of $\mu$-stable locally free sheaves if the rank of the sheaves is not a square number. 
%The second application is the generalization of \cite[Theorem 1.1]{Kaw}. 
\end{abstract}

\tableofcontents

\section{Introduction} 

%%%%%This introduction is version 1. 
%
In the article $\cite{Bri}$, Bridgeland constructed the theory of stability conditions on arbitrary triangulated categories $\mca D$. 
A stability condition $\sigma$ is a pair $(\mca A, Z)$ with some axioms where $\mca A$ is the heart of a bounded $t$-structure on $\mca D$ and $Z$ is a group homomorphism from the Grothendieck group $K(\mca D)$ of $\mca D$ to $\bb C$. 
Let $\Stab (\mca D)$ be the space of stability conditions on $\mca D$. 
If $\Stab (\mca D)$ is not empty then $\Stab (\mca D)$ is known to be a complex manifold by \cite{Bri}. 
If a stability condition $\sigma$ on $\mca D$ exists we can define the notion of $\sigma$-stability for objects $E \in \mca D$.

Suppose that $\mca D$ is the bounded derived category $D(X)$ of coherent sheaves on a  smooth projective variety $X$ over $\bb C$. 
In this paper we study the case where $X$ is a projective K3 surface. 
Then as is well-known, the space $\Stab (X)$ of stability conditions on $D(X)$ is not empty by virtue of Bridgeland \cite{Bri2}. 
Then for coherent sheaves on $X$ we have have the notion of $\sigma$-stability in addition to Gieseker stability and $\mu$-stability. 
Thus it is natural to compare these stabilities. 
We shall give an partial answer to this problem.

We have two goals. 
The first goal is to show the $\sigma$-stability of Gieseker stable (or $\mu$-stable) sheaves on $X$ if $\sigma$ is in explicit subsets of $\Stab (X)$ depending on the sheaves. 
This result will be proved in Theorems \ref{A.4} and \ref{A.10}. 
The second goal is to give two applications of these two theorems.

We comment on Theorems \ref{A.4} and \ref{A.10}. 
Recall that the space $\Stab (X)$ has the subset $U(X)$ described by
\begin{eqnarray}
U(X) &=& \{ \sigma  \in \Stab (X)|\forall x \in X,\  \mca O_x\mbox{ is $\sigma$-stable with a common phase} \notag \\
&&\mbox{ and $\sigma$ is good, locally finite and numerical}   \},
\label{U(X)}
\end{eqnarray}
where $\mca O_x$ is the structure sheaf of a closed point $x \in X$. 
Very roughly this subset $U(X)$ is also a trivial $\tilde {GL}^+(2, \bb R)$ bundle over a set $V(X)$, where $\tilde {GL}^+(2, \bb R)$ is the universal cover of $GL^+(2, \bb R)$ (See also Section $3$). 
In addition $V(X)$ is roughly parametrized by $\bb R$ divisors $\beta$ and $\bb R$ ample divisors $\omega$.   
Hence we can write $\sigma \in U(X)$ as $\sigma = \sigma_{(\beta, \omega)} \cdot \tilde  g$ where $\sigma _{(\beta,\omega)}\in V(X)$ and $\tilde g \in \tilde{GL}^+(2, \bb R)$. 
 It is shown in \cite{Bri2} that if we take a sufficiently large $\omega >> 0$, then the notion of $\sigma$-stability is just $(\beta, \omega)$-twisted stability for coherent sheaves. 
Namely, for any sufficiently large $\lambda >>0$ if $E \in D(X)$ is $\sigma_{(\beta, \lambda \omega)}\cdot \tilde g$-stable then $E$ is a $(\beta, \omega )$-twisted stable sheaf and vice versa. 
In some sense we strengthen this result. 
We give an explicit bound for $\lambda$ depending on sheaves so that Gieseker stable sheaf is $\sigma_{(\beta,\lambda \omega)}\cdot \tilde g$-stable.

In Theorem \ref{B.4}, which is the first application, we study fine moduli spaces of Gieseker stable sheaves on a projective K3 surface $X$ with Picard number one. 
More precisely we show the fine moduli space of Gieseker stable torsion free sheaves is the fine moduli space of $\mu$-stable locally free sheaves if the rank of the sheaves is not a square number. 
We also show that if the rank is a square number then the fine moduli space is the moduli space of $\mu$-stable locally free sheaves or the moduli space of properly Gieseker stable torsion free sheaves\footnote{Namely the sheaf is neither $\mu$-stable nor locally free. }. 
Furthermore we show that if the latter case occurs then the moduli space is isomorphic to $X$ itself. 
The key idea of the proof of Theorem \ref{B.4} is to compare two Jordan-H\"{o}lder filtrations of a Gieseker stable sheaf with respect to $\mu$-stability and $\sigma$-stability for some $\sigma \in \Stab (X)$. 
This comparison is enabled by Proposition \ref{B.2}. 
In this proposition we show that the $\sigma$-stability of some Gieseker stable sheaf $E$ on $X$ is equivalent to the $\mu$ stability and the local freeness of $E$ if $\sigma$ is in a subset $V^-_{v(E)}$ (See Section 5 for the definition of $V_{v(E)}^-$). 
As a consequence of Theorem \ref{B.4}, we see that any non trivial Fourier-Mukai partner should be the fine moduli of $\mu$-stable locally free sheaf. 
%This gives an answer to Huybrechts's question in the step (iv) of the proof of \cite[Proposition 3.2]{Huy}

The second application is Theorem \ref{C.7}, which is a generalization of \cite[Theorem 1.1]{Kaw}. 
%First, we briefly recall this result of \cite{Kaw}. 
%If $X$ is a projective K3 surface or an abelian surface, there is an explicit locus $U(X)$ in $\Stab (X)$ described by Bridgeland and $U(X)$ has the following description:
%\begin{multline}
%U(X) = \{ \sigma  \in \Stab (X)|\forall x \in X,\  \mca O_x\mbox{ is $\sigma$-stable with a common phase}\\
%\mbox{ and $\sigma$ is good, locally finite and numerical}   \}, 
%\label{U(X)}
%\end{multline}
Let $\Phi:D(Y) \to D(X)$ be an equivalence where $X$ and $Y$ are projective K3 surfaces with Picard number one and let $\Phi_* :\Stab (Y) \to \Stab (X)$ be a natural map induced by $\Phi$. 
In \cite[Theorem 1.1]{Kaw}, 
the author showed that, if $\Phi$ satisfies the condition $\Phi_* U(Y) = U(X)$ then the equivalence $\Phi$ is given by $M \otimes f_* (-)[n]$ 
where $M$ is a line bundle on $X$, $f$ is an isomorphism from $Y$ to $X$ and $n \in \bb Z$.

As the second application, we remove the assumption that the Picard number of $X$ is one from \cite[Theorem 1.1]{Kaw}. 
We proceed as follows. 
For an equivalence $\Phi :D(Y) \to D(X)$ satisfying the assumption $\Phi_* U(Y)=U(X)$, 
one can see that it is enough to prove $\Phi(\mca O_y) =\mca O_x[n]$ where $x \in X$ and $n \in \bb Z$, since $U(X)$ is given by (\ref{U(X)}). 
In \cite{Kaw}, this was proved by using \cite[Theorem 6.6]{Kaw}. 
Hence the crucial part of the proof of \cite[Theorem 1.1]{Kaw} is \cite[Theorem 6.6]{Kaw}. 
A necessary generalization of this result of \cite{Kaw} will be done in Corollary \ref{C.6} by applying Theorem \ref{A.6}. 

We finally explain the motivation of our study.  
In the previous paper \cite {Kaw} we also showed the $\sigma$-stability of Gieseker stable or $\mu$-stable sheaves. 
Before we started the previous study we expected that there would be a Gieseker stable torsion free sheaf $E$ with $\dim \Ext^1_X(E,E)=2$ on a polarized K3 surface $(X, L)$ such that $E$ is $\sigma$-stable for all $\sigma$ in $U(X)$. 
This conjecture is based on the fact that any line bundles $P$ with $c_1(P)=0$ on an abelian surface\footnote{These line bundles are Gieseker stable with $\dim \Ext^1_X(E, E) =2$. } are $\sigma$-stable for all $\sigma \in U(X)$. 
However throughout the previous study we showed our conjecture never holds if $X$ is a projective K3 surface with Picard number one. 
Hence we had to give up our first conjecture and tried the following two things. 
One is to find explicit subsets of $U(X)$ depending on Gieseker stable sheaves so that the sheaves are $\sigma$-stable if $\sigma$ is in the subsets. 
The other is to find interesting applications of $\sigma$-stability of Gieseker stable sheaves.

\section{Review of classical stability for sheaves}

In this section we recall the $\mu$-stability, Gieseker stability and twisted stability for coherent sheaves on a projective K3 surface. 

We first introduce some notations. 
Throughout this section $X$ is a projective K3 surface over $\bb C$. 
Let $A$ and  $B$ be in $D(X)$. 
If the $i$-th cohomology $H^i(A)$ is concentrated only at degree $i=0$, we call $A$ a \textit{sheaf}. 
We put $\Hom ^n_X(A,B) := \Hom_{D(X)}(A,B[n]) $ and $\hom^n_{X}(A,B) := \dim _{\bb C}\Hom_X^n(A,B)$ where $[n] $ means $n \in \bb Z $ times shifts. 
We remark that 
\[
\Hom_X^n(A,B) = \Hom^{2-n}_X(B,A)^*
\]
by the Serre duality. 
Then the Euler paring $\chi(E, F)= \sum_{i}(-1)^i \hom_X^i(E,F)$ is a $\bb Z$-bilinear symmetric form on the Grothendieck group $K(X)$ of $D(X)$. 

Let $\mca N(X)$ be the quotient of $K(X)$ by numerical equivalence with respect to the Euler pairing $\chi$. 
Then $\mca N(X)$ is isomorphic to $$H^0(X, \bb Z) \+ \mr{NS}(X)\+ H^4(X, \bb Z)$$ where $\mr{NS}(X)$ is the N\'{e}ron-Severi Lattice of $X$. 
For $E \in D(X)$, we define the Mukai vector $v(E)$ of $E$ by $ch(E)\sqrt{td_X}$. 
Then $v(E)=r_E\+ \delta _E \+ s_E$ is in $H^0(X, \bb Z) \+ \mr{NS}(X) \+ H^4(X, \bb Z)$ and we have $r_E=\rank E$, $\delta_E = c_1(E)$ and $s_E = \chi (\mca O_X, E)-r_E$.

Let $\< -,-\>$ be the Mukai paring on $\mca N(X)$:
\[
\< r\+ \delta \+ s, r'\+\delta' \+ s'  \> = \delta \delta '-rs' -r's,
\]
where $r \+ \delta \+ s, r' \+ \delta ' \+ s' \in \mca {N}(X)$. 
Then, by the Riemann-Roch formula, we see  
\[
\chi (E,F) = -\< v(E), v(F)  \>.
\]

We secondly recall the notion of the $\mu$-stability. 
For a torsion free sheaf $F$ and an ample divisor $\omega$, the \textit{slope} $\mu_{\omega}(F)$ is 
defined by $(c_1(F) \cdot \omega)/\rank F$. 
If the inequality $\mu_{\omega}(A) \leq \mu_{\omega}(F)$ holds for any non-trivial subsheaf $A$ of $F$, then $F$ is said to be \textit{$\mu_{\omega}$-semistable}. 
Moreover if the strict inequality $\mu_{\omega}(A) < \mu_{\omega}(F)$ holds for any non-trivial subsheaf $A$ with $\rank A < \rank F$, then 
$F$ is said to be \textit{$\mu_{\omega}$-stable}. 
If $\mr{NS}(X) = \bb Z L$, we write $\mu$-(semi)stable instead of $\mu_L$-(semi)stable. 
The notion of the $\mu_{\omega}$-stability admits the Harder-Narashimhan filtration of $F$ (details in \cite{HL}). 
We define $\mu_{\omega}^+ (F)$ by the maximal slope of semistable factors of $F$, and 
$\mu_{\omega}^- (F)$ by the minimal slope of semistable factors of $F$.

Let $\beta$ be an $\bb R$ divisor and $\omega $ an $\bb R$ ample divisor on $X$\footnote{Originally the notion of twisted stability is defined on projective surfaces. To avoid the complexity we add the assumption that $X$ is a projective K3 surface. }.  
For a pair $(\beta, \omega)$ we define the notion of $(\beta,\omega)$ twisted stability introduce by \cite{MW}.  
For a torsion free sheaf $E$ with $v(E) = r_E \+ \delta _E\+ s_E$, 
we define a polynomial $p_{(\beta, \omega)}(E)$ by 
\[
p_{(\beta, \omega)}(E) := \frac{\omega ^2}{2}\cdot n^2 +  \Bigl(\frac{\delta _E}{r_E}-\beta \Bigr)\omega\cdot n + \frac{s_E}{r_E}- \frac{\delta _E\beta }{r_E}+ \frac{\beta ^2}{2} +1 \in \bb R[n]. 
\]
Suppose that $\omega$ is an integral class and put $\omega = \mca O_X(1)$. Then $p_{(\beta, \omega )}(n)$ is simply given by 
\[
p_{(\beta, \omega)}(E) = -\frac{\< v(\mca O_X(-n)), \exp(-\beta)v(E)\>}{r_E}.  
\]

\begin{dfn}
Let $E$ be a torsion free sheaf on a projective K3 surface $X$. 
$E$ is said to be \textit{$(\beta, \omega)$-twisted (semi)stable} if $p_{(\beta, \omega)}(F) < (\leq )p_{(\beta, \omega)}(E)$ for any nontrivial subsheaf $F$ of $E$. 

Moreover if $\beta =0$ then $E$ is said to be \textit{Gieseker (semi)stable} with respect to $\omega$. 
For a torsion free sheaf $E$, we write $p_{\omega }(E)$ instead of $p_{(0, \omega )}(E)$. 
\end{dfn}

\begin{rmk}
For a torsion free sheaf $E$, we can easily check the following relation between the $\mu_{\omega}$-stability and the $(\beta, \omega)$-twisted stability:
\begin{center}
\resizebox{0.99\hsize}{!}{
$
\mu_{\omega}\mbox{-stable} \Rightarrow (\beta, \omega)\mbox{-twisted stable} \Rightarrow (\beta, \omega)\mbox{-twisted semistable} \Rightarrow \mu_{\omega}\mbox{-semistable}. 
$
}
\end{center}

We also see the following relation between the $\mu_{\omega}$-stability 
and the Gieseker stability:
\begin{center}
\resizebox{0.99\hsize}{!}{
$
\mu_{\omega}\mbox{-stable} \Rightarrow \mbox{Gieseker stable} \Rightarrow \mbox{Gieseker semistable} \Rightarrow \mu_{\omega}\mbox{-semistable}. 
$
}
\end{center}
\end{rmk}

Finally we cite the following lemma which plays an important role when we study the space of stability conditions on abelian or K3 surfaces. 
A prototype of Lemma \ref{hom1} was first proved by Mukai and Bridgeland. 
Finally \cite{HMS} refined it. 

\begin{lem}(\cite[Lemma 2.7]{HMS})\label{hom1}
Let $X$ be an abelian surface or a K3 surface. 
Suppose that $A\to B \to C \to A[1]$ is a distinguished triangle in $D(X)$. 
If $\hom_X^i(A,C)=0$ for any $i\leq 0$ and $\hom_X^j(C,C)=0$ for any $j<0$ then we have the following inequality:
\[
0 \leq  \hom_X^1(A,A) + \hom_X^1(C,C) \leq \hom_X^1(B,B). 
\]
\end{lem}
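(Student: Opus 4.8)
The plan is to dispense with the left-hand inequality immediately (it merely asserts that dimensions are nonnegative) and concentrate all effort on the right-hand inequality $\hom^1_X(A,A)+\hom^1_X(C,C)\le \hom^1_X(B,B)$. Before starting I would push the hypotheses through Serre duality: since $\Hom^n_X(E,F)=\Hom^{2-n}_X(F,E)^*$ on a K3 or abelian surface, the assumption $\hom^i_X(A,C)=0$ for $i\le 0$ is equivalent to $\hom^j_X(C,A)=0$ for $j\ge 2$. Writing the triangle as $A\xrightarrow{f}B\xrightarrow{g}C\xrightarrow{h}A[1]$ and applying $\Hom_X(-,B)$, I get a long exact sequence containing
\[
\Hom^1_X(C,B)\xrightarrow{g^*}\Hom^1_X(B,B)\xrightarrow{f^*}\Hom^1_X(A,B)\xrightarrow{h^*}\Hom^2_X(C,B),
\]
so that $\Ker(f^*)=\Im(g^*)$ and $\Im(f^*)=\Ker(h^*)$. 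By rank--nullity $\hom^1_X(B,B)=\dim\Im(f^*)+\dim\Ker(f^*)$, and the whole proof reduces to two separate lower bounds: an ``$A$-contribution'' $\dim\Im(f^*)\ge\hom^1_X(A,A)$ and a ``$C$-contribution'' $\dim\Ker(f^*)\ge\hom^1_X(C,C)$.

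For the $A$-contribution I would apply $\Hom_X(A,-)$ to the triangle. Because $\Hom^0_X(A,C)=0$, the postcomposition map $f_*\colon\Hom^1_X(A,A)\to\Hom^1_X(A,B)$ is injective. It remains to see that its image lies in $\Im(f^*)=\Ker(h^*)$: for $\alpha\in\Hom^1_X(A,A)$ the element $h^*(f_*\alpha)$ is obtained by composing $\alpha$ with $h$ and then with $f$, and the inner composite $\alpha[1]\circ h$ lives in $\Hom^2_X(C,A)$, which vanishes by the Serre-dual hypothesis. Hence $f_*\Hom^1_X(A,A)\subseteq\Ker(h^*)=\Im(f^*)$ and $\dim\Im(f^*)\ge\hom^1_X(A,A)$.

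For the $C$-contribution, recall $\Ker(f^*)=\Im\bigl(g^*\colon\Hom^1_X(C,B)\to\Hom^1_X(B,B)\bigr)$. Applying $\Hom_X(C,-)$ and using $\Hom^2_X(C,A)=0$ shows that the postcomposition map $g_*\colon\Hom^1_X(C,B)\to\Hom^1_X(C,C)$ is surjective, so $\dim\Im(g_*)=\hom^1_X(C,C)$. The key is then to compare the two kernels inside $\Hom^1_X(C,B)$: from the $\Hom_X(-,B)$ sequence one has $\Ker(g^*)=\Im\bigl(h^*\colon\Hom^0_X(A,B)\to\Hom^1_X(C,B)\bigr)$, and for $\beta\in\Hom^0_X(A,B)$ the composite $g_*(h^*\beta)$ is controlled by $g\circ\beta\in\Hom^0_X(A,C)=0$, whence $g_*\circ h^*=0$ and $\Ker(g^*)\subseteq\Ker(g_*)$. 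Comparing image dimensions gives $\dim\Ker(f^*)=\dim\Im(g^*)\ge\dim\Im(g_*)=\hom^1_X(C,C)$, and adding the two bounds finishes the argument. (Note that this approach uses only the instance $i=0$ of the first hypothesis together with its Serre dual $\Hom^2_X(C,A)=0$.)

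The main obstacle I anticipate is not conceptual but bookkeeping: one must correctly identify every connecting homomorphism across the $\Hom_X(-,B)$, $\Hom_X(A,-)$ and $\Hom_X(C,-)$ sequences as pre- or post-composition with $f$, $g$ or $h$, and then verify the two vanishing identities $h^*(f_*\alpha)=0$ and $g_*(h^*\beta)=0$, each of which must be traced to a composite landing in one of the Serre-dual zero groups $\Hom^2_X(C,A)$ or $\Hom^0_X(A,C)$. Once the inclusion $\Ker(g^*)\subseteq\Ker(g_*)$ is secured, the final assembly via rank--nullity is immediate.
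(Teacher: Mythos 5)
Your proof is correct. Note that the paper offers no proof of this lemma at all: it is quoted verbatim from \cite[Lemma 2.7]{HMS} (whose proof in turn goes back to Mukai, via Bridgeland), so the only meaningful comparison is with the cited source, and your argument is essentially that standard Mukai-type dimension count: rank--nullity applied to $f^*\colon\Hom_X^1(B,B)\to\Hom_X^1(A,B)$, with the covariant and contravariant long exact sequences plus Serre duality supplying the two bounds $\dim\Im (f^*)\geq \hom_X^1(A,A)$ and $\dim\Ker (f^*)\geq \hom_X^1(C,C)$. I checked the four key identifications and they are all sound: injectivity of $f_*$ on $\Hom_X^1(A,A)$ from $\Hom_X^0(A,C)=0$; the inclusion $\Im (f_*)\subseteq\Ker (h^*)=\Im (f^*)$ from $\Hom_X^2(C,A)=0$; surjectivity of $g_*\colon\Hom_X^1(C,B)\to\Hom_X^1(C,C)$ again from $\Hom_X^2(C,A)=0$; and $\Ker (g^*)=\Im\bigl(h^*\colon\Hom_X^0(A,B)\to\Hom_X^1(C,B)\bigr)\subseteq\Ker (g_*)$ from $\Hom_X^0(A,C)=0$ (sign conventions on the connecting maps are irrelevant, since only vanishing and kernel/image dimensions enter). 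Your closing observation is also correct and worth retaining: the argument uses only the $i=0$ case of the hypothesis $\hom_X^i(A,C)=0$ together with its Serre dual $\hom_X^2(C,A)=0$, and never uses $\hom_X^j(C,C)=0$ for $j<0$; the latter conditions are automatic in the situations where the lemma is applied (objects in the heart of a $t$-structure, or semistable objects of close phases), which is presumably why they appear in the statement.
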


\section{Review of Bridgeland's work}

In this section we briefly recall the theory of stability conditions. 
The details are in the original articles \cite{Bri} and \cite{Bri2}. 
For a projective K3 surface $X$ we put $\mr{NS}(X)_{\bb R} = \mr{NS}(X) \otimes _{\bb Z}\bb R$ and $\mr{Amp}(X)$ by the set of $\bb R$-ample divisors. 

Let $\mca A$ be the heart of a bounded $t$-structure on the derived category $D(X)$ of $X$ and let $Z$ be a group homomorphism from $K(X)$ to $\bb C$. 
Notice that $K(X)$ is isomorphic to the Grothendieck of the heart $\mca A$. 
The morphism $Z$ is called a \textit{stability function} on $\mca A$ if $Z$ satisfies the following:
\[
0 \neq E \in \mca A \Rightarrow Z(E) =m\exp(\sqrt{-1}\pi \phi_E),
\]
where $m \in \bb R_{>0}$ and $\phi _E$ is in the interval $(0,1]$. 
Then we put $\arg Z(E) = \phi_E$ and call the pair $(\mca A, Z)$ a \textit{stability pair} on $D(X)$. 
If we take a stability pair $(\mca A, Z)$, we can define the notion of $Z$-stability for objects in $\mca A$ as follows: 

\begin{dfn}
Let $(\mca A, Z)$ be a stability pair on $D(X)$ and $E$ in $\mca A$. 
The object $E$ is said to be $Z$-\textit{(semi)stable} if $E$ satisfies $\arg Z(F) <(\leq ) \arg (E)$ for any non-trivial subobject. 
\end{dfn}

By using the notion of $Z$-stability, we define a stability condition on $D(X)$ as follows:

\begin{dfn}
A stability pair $(\mca A, Z) $ is said to be a \textit{stability condition} on $D(X)$ if any $E \in \mca A$ has the filtration 
$0 = E_0 \subset E_1 \subset \cdots E_{n-1}\subset E_n =E$ such that 
$A_i := E_i/E_{i-1}$ ($i=1, 2,\cdots , n$) is $Z$-semistable with $\arg Z(A_1) > \cdots > \arg Z(A_n)$.  
We call such a filtration the \textit{Harder-Narashimhan filtration} of $E$. 
Moreover if $Z$ factors through $\mca N(X)$, $\sigma$ is said to be \textit{numerical}. 
\end{dfn}

%Recall the Euler pairing $\chi$. $\chi$ is $\bb Z$-bilinear paring on $K(X)$. 

%Note that $K(X)/\equiv $ is isomorphic to $\mca N(X)$. 
Let $\sigma=(\mca A, Z)$ be a stability condition on $D(X)$. 
Then we can define the notion of $\sigma$-stability for any object in $D(X)$\footnote{For a stability pair $(\mca A, Z)$, we can logically define the notion of $\sigma$-stability for objects in $D(X)$. However in the original article \cite{Bri}, the notion of stability of arbitrary objects in $D(X)$ is defined by a stability condition. Thus we follow the original style. }. 
An object $E \in D(X)$ is said to be $\sigma$-\textit{(semi)stable} if there is an integer $n \in \bb Z$ such that $E[n]$ is in $\mca A$ and $E[n]$ is $Z$-(semi)stable. 
We define $\arg Z(E)$ by $\arg Z(E[n])-n$ and call it the \textit{phase} of $E$. 

We put $\mca P(\phi)= \{ E \in \mca D(X) | E \mbox{ is $Z$-semistable with phase } \phi \}\cup \{ 0 \}$. 
Then $\mca P(\phi)$ is an abelian category. 
For an interval $I \subset \bb R$, we define $\mca P(I)$ by the extension closed full subcategory generated by $\mca P(\phi )$ for all $\phi \in I$. 
If for any $\phi \in \bb R$ there is a positive number $\epsilon$ such that $\mca P((\phi-\epsilon, \phi + \epsilon))$ is artinian and noetherian, then the stability condition $\sigma=(\mca A, Z)$ is said to be \textit{locally finite}.

In general we cannot define the argument of $Z(E)$ for $E \in D(X)$. 
However if $E $ is in $\mca A$ (or $\mca A[-1]$) then we can define the argument of $Z(E)$ uniquely 
since the argument $\arg Z(E)$ is in $(0,1]$ (respectively in $(-1,0]$). 

Take a stability condition $\sigma =(\mca A, Z)$ on $D(X)$. 
Then we can easily check that there exists the following sequence of distinguished triangles for an arbitrary object $E \in D(X)$:
\begin{equation*}
\resizebox{0.99\hsize}{!}{
\xymatrix{
0\ar[rr]	&	&	E_1\ar[ld]\ar[rr]	& 	& E_2\ar[r]\ar[ld] & \cdots \ar[r] & E_{n-1}\ar[rr]	&	  &E_n=E ,\ar[ld]\\ 
	&A_1\ar@{-->}[ul]^{[1]}& 		&A_2\ar@{-->}[ul]^{[1]}& 	  &		  &			&A_n\ar@{-->}[ul]^{[1]}& 
}
%\label{HNF}
}
\end{equation*}
where each $A_i$ is $\sigma$-semistable with $\arg Z(A_1) > \cdots > \arg Z(A_n)$.  
One can easily check that the above sequence is unique up to isomorphism. 
We also call this sequence the \textit{Harder-Narashimhan filtration} (for short HN filtration). 
If $E $ is in $\mca A$ then the above filtration is nothing but the filtration defined in Definition 3.2. 
In addition assume that $\sigma$ is locally finite. 
Then for a $\sigma$-semistable object $F$ with phase $\phi$ we have the following sequence of distinguished triangles:
\begin{equation*}
\resizebox{0.99\hsize}{!}{
\xymatrix{
0\ar[rr]	&	&	F_1\ar[ld]\ar[rr]	& 	& F_2\ar[r]\ar[ld] & \cdots \ar[r] & F_{m-1}\ar[rr]	&	  &F_m=F ,\ar[ld]\\ 
	&S_1\ar@{-->}[ul]^{[1]}& 		&S_2\ar@{-->}[ul]^{[1]}& 	  &		  &			&S_m\ar@{-->}[ul]^{[1]}& 
}
%\label{HNF}
}
\end{equation*}
where each $S_j$ is $\sigma$-stable with $\arg Z(S_j) = \phi$. 
We call this filtration a \textit{Jordan-H\"{o}lder filtration} (for short JH filtration). 
We remark that a JH filtration of $F$ is not unique 
but the direct sum $\oplus_{i=1}^{m} S_i$ of all stable factors of $F$ is unique up to isomorphism.

Now we put 
\[
\Stab (X) = \{ \sigma | \sigma \mbox{ is a numerical locally finite stability condition on }D(X) \}.
\]
Bridgeland \cite{Bri2} describes a subset $U(X)$ of $\Stab (X)$. 
We shall recall its definition. 
We put 
\[
\Delta ^+ (X) := \{ v = r \+ \delta \+ s \in \mca N(X) | v^2 =-2, r >0 \}, 
\]
and define a subset $\mca V(X)$ of $\mr{NS}(X)_{\bb R} \times \mr{Amp}(X)$ by 
\begin{multline*}
\mca V(X) := \{  (\beta, \omega) \in\mr{NS}(X)_{\bb R} \times \mr{Amp}(X) | \\
\< \exp(\beta + \sqrt{-1}\omega) , v \> \not\in \bb R_{\leq 0} (\forall v \in \Delta ^+(X) ) \}. 
\end{multline*}
Let $(\beta, \omega) \in \mca V(X)$. 
Then $(\beta ,\omega)$ gives a numerical locally finite stability condition $\sigma_{(\beta, \omega)}=(\mca A_{(\beta, \omega)}, Z_{(\beta ,\omega)})$ in the following way.  
We put $\mca A_{(\beta,\omega)}$ by 
\[
\mca A_{(\beta,\omega )} :=   \{ E^{\bullet} \in D(X)| 
H^i(E^{\bullet})	\begin{cases}
					\in \mca T_{(\beta,\omega)} & (i=0) \\ 
					\in \mca F_{(\beta,\omega)} & (i=-1) \\ 
					= 0 & (i \neq 0,-1) 
					\end{cases}
\},
\]
where 
\begin{eqnarray*}
\mca T_{(\beta,\omega )} &:=& \{ E \in \mr{Coh}(X) | E\mbox{ is a torsion sheaf or }
\mu _{\omega}^-(E/\mr{torsion} ) > \beta \omega  \} \mbox{ and }\\
\mca F_{(\beta,\omega)} &:=& \{ E \in \mr{Coh}(X) | E\mbox{ is torsion free and } 
\mu _{\omega}^+ (E) \leq \beta \omega  \}. 
\end{eqnarray*}
We define a stability function $Z_{(\beta, \omega)}$ by $Z_{(\beta, \omega)}(E) := \< \exp(\beta+ \sqrt{-1}\omega), v(E) \>$. 
Then the pair $\sigma_{(\beta, \omega)}=(\mca A_{(\beta, \omega)}, Z_{(\beta, \omega)})$ gives a numerical locally finite stability condition by \cite{Bri2}.

Then we put $V(X):=\{ \sigma_{(\beta ,\omega)} | (\beta ,\omega )\in \mca V(X) \}$. 
If $\sigma$ is in $V(X)$ then for any closed point $x \in X$, $\mca O_x$ is $\sigma$-stable with phase $1$ by \cite[Lemma 6.3]{Bri2}. 
Let $\tilde{GL}^+(2, \bb R) $ be the universal cover of $GL^+(2, \bb R)$. 
Then $\Stab (X)$ has the right group action of $\tilde{GL}^+(2, \bb R)$ by \cite[Lemma 8.2]{Bri}. 
%By using the right $\tilde{GL}^+(2, \bb R)$ action we can expand $V(X)$. 
We put $U(X):= V(X)\cdot \tilde{GL}^+(2, \bb R)$. 
We remark that $U(X)$ is isomorphic to $V(X) \times \tilde{GL}^+(2, \bb R)$.

Let $\sigma$ be in $\Stab (X)$.  
Since $\sigma$ is numerical and the Euler paring is nondegenerate on $\mca N(X)$, we have a natural map
\[
\pi:\Stab (X) \to \mca N(X), \pi(\mca A,Z) \to Z^{\vee},
\]
where $Z(E)= \<  Z^{\vee}, v(E) \>$. 
The map $\pi$ gives a complex structure on $\Stab (X)$. 
In particular each connected component of $\Stab (X)$ is a complex manifold by \cite{Bri}. 
If $\pi(\sigma)$ spans a positive real $2$-plane and is orthogonal to all $(-2)$ vectors in $\mca N(X)$ then $\sigma$ is said to be \textit{good}. 

\begin{prop}(\cite[Proposition 10.3]{Bri2})\label{good}
The special locus $U(X)$ is written by 
\[
U(X) = \{  \sigma \in \Stab(X) | \mca O_x\mbox{ is $\sigma$-stable with a common phase and }\sigma\mbox{ is good}  \}.
\]
\end{prop}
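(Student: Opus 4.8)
\textit{Proof proposal.} The plan is to prove the asserted equality of sets by establishing the two inclusions separately; write $\mca U$ for the right-hand side. The inclusion $U(X)\subseteq \mca U$ is the routine direction. Since $U(X)=V(X)\cdot\tilde{GL}^+(2,\bb R)$, it suffices to verify the two defining properties on $V(X)$ and then check they are preserved by the group action. For $\sigma_{(\beta,\omega)}\in V(X)$ each $\mca O_x$ is $\sigma_{(\beta,\omega)}$-stable of phase $1$ by \cite[Lemma 6.3]{Bri2} (recalled above), so all the $\mca O_x$ share a common phase; and $\pi(\sigma_{(\beta,\omega)})=\exp(\beta+\sqrt{-1}\omega)$ has real and imaginary parts spanning a positive $2$-plane because $\omega^2>0$, while the $(-2)$-part of goodness is exactly what membership in $\mca V(X)$ is designed to encode, so $\sigma_{(\beta,\omega)}$ is good. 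Finally, acting by $\tilde g\in\tilde{GL}^+(2,\bb R)$ changes neither the class of $\sigma$-(semi)stable objects nor the cyclic order of their phases, so a common phase stays common; and $\pi(\sigma\cdot\tilde g)$ is obtained from $\pi(\sigma)$ by the induced $GL^+(2,\bb R)$-action on the spanned $2$-plane, which preserves both its positivity and its relation to the $(-2)$-vectors. Hence $U(X)\subseteq\mca U$.

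For the reverse inclusion $\mca U\subseteq U(X)$, fix $\sigma=(\mca A,Z)\in\mca U$. As both $\mca U$ and $U(X)$ are stable under the $\tilde{GL}^+(2,\bb R)$-action, I would first normalise. All skyscrapers have the same Mukai vector $v(\mca O_x)=0\+0\+1$, hence the same central charge $Z(\mca O_x)=\<Z^{\vee},v(\mca O_x)\>$; choosing a suitable $\tilde g$ I arrange that the common phase is $1$ and $Z(\mca O_x)=-1$, so that $\mca O_x\in\mca A=\mca P((0,1])$ for every $x$. The substance of the argument is then to show that this single hypothesis forces $\mca A$ to be one of the standard hearts $\mca A_{(\beta,\omega)}$.

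The key step is to prove that $\mca A$ is a tilt of $\mr{Coh}(X)$: there is a torsion pair $(\mca T,\mca F)$ on $\mr{Coh}(X)$ with $\mca A=\langle \mca F[1],\mca T\rangle$, equivalently every coherent sheaf has $\mca A$-cohomology concentrated in degrees $0$ and $-1$. I would deduce this from the fact that the $\sigma$-stable objects of Mukai vector $0\+0\+1$ are precisely the point sheaves $\mca O_x$, all lying in $\mca A$ with phase $1$: the $\mca O_x$ generate the subcategory of sheaves supported in dimension zero and thereby pin down the possible degrees in which the $\mca A$-cohomology of an arbitrary sheaf can sit, which rigidifies the $t$-structure to a single tilt of the standard one. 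Granting this, I write $Z^{\vee}=\pi(\sigma)$; goodness gives a positive $2$-plane correctly placed relative to the $(-2)$-classes, and together with $Z(\mca O_x)=-1$ this lets me write $Z^{\vee}=\exp(\beta+\sqrt{-1}\omega)$ with $\beta,\omega\in\mr{NS}(X)_{\bb R}$. The $(-2)$-part of goodness, combined with the requirement that $Z$ be a stability function on $\mca A$ (so $\mr{Im}\,Z\ge 0$ on $\mca A$), forces $\omega$ into the ample cone rather than merely the positive cone, since on a K3 surface the walls of the ample cone are cut out exactly by $(-2)$-classes. Finally I would match the torsion pair: positivity of $Z$ shows that a $\mu_\omega$-semistable sheaf lies in $\mca T$ precisely when its slope exceeds $\beta\omega$ and in $\mca F$ otherwise, identifying $(\mca T,\mca F)$ with $(\mca T_{(\beta,\omega)},\mca F_{(\beta,\omega)})$ and hence $\sigma$ with $\sigma_{(\beta,\omega)}\in V(X)$; undoing the normalisation yields $\sigma\in U(X)$.

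I expect the heart-is-a-tilt step to be the main obstacle: the remaining work is bookkeeping with the $\tilde{GL}^+(2,\bb R)$-action and with the explicit shape of $\exp(\beta+\sqrt{-1}\omega)$, whereas reconstructing the entire $t$-structure from the single datum that all point sheaves are stable of one common phase is where the genuine geometric input about $D(X)$ enters. A secondary subtlety is upgrading positivity of the $2$-plane to \emph{ampleness} of $\omega$, which is exactly the point at which the $(-2)$-class condition in the definition of goodness must be invoked.
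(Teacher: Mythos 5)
The first thing to note is that the paper contains no proof of this proposition at all: it is quoted, with citation, from \cite[Proposition 10.3]{Bri2} and used as a black box, so your attempt can only be measured against Bridgeland's own argument. Your outline does reproduce the global strategy of that argument --- the easy inclusion by checking the two defining properties on $V(X)$ and their invariance under the $\tilde{GL}^+(2,\bb R)$-action, and the hard inclusion by normalising so that every $\mca O_x$ is stable of phase $1$ with $Z(\mca O_x)=-1$ and then reconstructing the heart as a tilt of $\mr{Coh}(X)$. (One caveat even in the easy direction: the defining condition of $\mca V(X)$ quantifies only over $\Delta^+(X)$, i.e.\ roots of positive rank, whereas goodness concerns all $(-2)$-vectors; the rank-zero roots $0\+ D\+ s$ are handled not by the definition of $\mca V(X)$ but by ampleness of $\omega$ together with the fact that any $D$ with $D^2=-2$ is $\pm$effective on a K3, whence $\omega D\neq 0$.)

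As a proof, however, the proposal has a genuine gap precisely at the step you yourself flag. Saying that the $\mca O_x$ ``pin down the possible degrees and thereby rigidify the $t$-structure to a single tilt'' is not an argument; it is a restatement of the conclusion, and it is the entire mathematical content of the proposition. What is actually required (and what Bridgeland supplies) is: for $E$ in the heart, stability of the $\mca O_x$ at the maximal phase together with Serre duality gives $\Hom^i_X(E,\mca O_x)=0$ for $i\notin\{0,1,2\}$, from which one bounds the cohomology sheaves of $E$; one must then construct the torsion pair, identify it with a slope-theoretic one, and --- the genuinely geometric point --- prove that the class $\omega$ so obtained is \emph{ample} rather than merely of positive square. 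Moreover, your endgame contains an actual error: after your normalisation you cannot conclude $Z^{\vee}=\exp(\beta+\sqrt{-1}\omega)$. The stabiliser of the normalisation ($Z(\mca O_x)=-1$, common phase $1$) inside $\tilde{GL}^+(2,\bb R)$ is a two-parameter subgroup which preserves the heart, and applying it to any $\sigma_{(\beta,\omega)}\in V(X)$ produces stability conditions that satisfy your normalisation and lie in the right-hand set, yet whose vector $\pi(\sigma)$ is \emph{not} isotropic --- whereas $\exp(\beta+\sqrt{-1}\omega)$ always is, since $\<\exp(\beta+\sqrt{-1}\omega),\exp(\beta+\sqrt{-1}\omega)\>=0$. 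So the correct target after normalising is not $\sigma\in V(X)$ but $\sigma\in V(X)\cdot\tilde{GL}^+(2,\bb R)=U(X)$: one shows the heart equals some $\mca A_{(\beta,\omega)}$ and that $Z$ lies in the orbit of $Z_{(\beta,\omega)}$ under the residual subgroup, rather than forcing $Z^{\vee}$ into exponential form.
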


Let us consider the boundary $\partial U(X) := \overline{U(X)}\backslash U(X)$ where $\overline {U(X)}$ is the closure of $U(X)$. 
Then $\partial U(X)$ consists of locally finite union of real codimension $1$ submanifolds by \cite[Proposition 9.2]{Bri2}. 
If $\sigma \in \partial U(X)$ lies on only one these submanifold, then $\sigma$ is said to be \textit{general}. 

\begin{thm}(\cite[Theorem 12.1]{Bri2})\label{wall}
Let $\sigma \in \partial U(X)$ be general. 
Then exactly one of the following holds:

$(A^+):$ There is a spherical locally free sheaf $A$ such that both $A$ and $T_A(\mca O_x)$ are stable factors of $\mca O_x$ for any $x \in X$, where $T_A$ is the spherical twist by $A$.  
Moreover a JH filtration of $\mca O_x$ is given by 
\[
\begin{CD}
A^{\oplus \rank A} @>>> \mca O_x @>>> T_A(\mca O_x) @>>> A^{\oplus \rank A}[1].
\end{CD}
\]
In particular $\mca O_x$ is properly $\sigma$-semistable\footnote{Namely $\mca O_x$ is not $\sigma$-stable but $\sigma$-semistable. } for all $x \in X$ 
and $A$ does not depend on $x \in X$. 

$(A^-):$ There is a spherical locally free sheaf $A$ such that both $A$ and $T_A^{-1}(\mca O_x)$ are stable factors of $\mca O_x$ for any $x \in X$, where $T_A$ is the spherical twist by $A$.  
Moreover a JH filtration of $\mca O_x$ is given by 
\[
\begin{CD}
T_A^{-1}(\mca O_x) @>>> \mca O_x @>>> A^{\oplus \rank A}[2] @>>> T_A^{-1}(\mca O_x)[1].
\end{CD}
\]
In particular $\mca O_x$ is properly $\sigma$-semistable for all $x \in X$ and $A$ does not depend on $x \in X$. 

$(C_k):$ There are a $(-2)$-curve $C$ and an integer $k$ such that $\mca O_x$ is $\sigma$-stable if $x \not\in C$ and $\mca O_x$ is properly $\sigma$-semistable if $x \in C$. 
Moreover a JH filtration of $\mca O_x$ for $x \in C$ is given by 
\[
\begin{CD}
\mca O_C(k+1) @>>> \mca O_x @>>> \mca O_C(k)[1] @>>> \mca O_C(k+1)[1]. 
\end{CD}
\]
\end{thm}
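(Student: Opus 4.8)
The plan is to read off the three cases by analysing the Jordan–H\"older factors of the skyscraper sheaves, which become strictly semistable exactly on $\partial U(X)$. Since $\sigma$ is general it lies on a single codimension-one wall, so I expect all the destabilising data to be controlled by one spherical $(-2)$-class, and the three alternatives to correspond to how that class is realised geometrically. First I would fix a point $x$ with $\mca O_x$ properly $\sigma$-semistable and pass to a JH filtration whose stable factors $S_1,\dots,S_m$ all have the common phase $\phi = \arg Z(\mca O_x)$ and satisfy $\sum_i v(S_i) = v(\mca O_x)$, where $v(\mca O_x)^2 = 0$.

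Next I would extract the numerical constraints. Because the $S_i$ are stable of equal phase, Schur's lemma gives $\hom^0_X(S_i,S_j) = \delta_{ij}$, and Serre duality on the K3 surface gives $\hom^2_X(S_i,S_j) = \hom^0_X(S_j,S_i) = \delta_{ij}$; hence $\chi(S_i,S_j) = 2\delta_{ij} - \hom^1_X(S_i,S_j)$. Feeding $v(\mca O_x)^2 = 0$ and $\chi = -\<-,-\>$ into $\bigl(\sum_i v(S_i)\bigr)^2 = 0$ yields the balance identity $\sum_{i,j}\hom^1_X(S_i,S_j) = 2m$. Applying Lemma \ref{hom1} to the two-step truncations of the filtration, with the total budget $\hom^1_X(\mca O_x,\mca O_x) = 2$, then forces all but one kind of factor to be rigid, i.e.\ spherical with $v(S_i)^2 = -2$; in particular at least one spherical factor $A$ must occur.

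Then I would carry out the lattice-theoretic identification. The equal-phase condition confines every $v(S_i)$ to the primitive sublattice $\Lambda \subset \mca N(X)$ on which $Z$ takes values in a single real ray, and the hypothesis that $\sigma$ is general is precisely what guarantees $\rank \Lambda = 2$. Since $\Lambda$ contains the isotropic class $v(\mca O_x)$ together with a $(-2)$-class, it is hyperbolic of signature $(1,1)$, and enumerating the decompositions $v(\mca O_x) = \sum_i v(S_i)$ into equal-phase stable classes inside $\Lambda$ leaves only the two-factor patterns in the statement, the dichotomy being whether the spherical class has positive rank or rank zero. In the positive-rank case the class is represented by a unique $\mu$-stable locally free sheaf $A$, and evaluating $\Hom^\bullet_X(A,\mca O_x)$ produces the triangle $A^{\oplus \rank A} \to \mca O_x \to T_A(\mca O_x)$; the computation $v(T_A(\mca O_x)) = v(\mca O_x) - (\rank A)\,v(A)$, again isotropic, confirms $T_A(\mca O_x)$ is the point-like factor and separates $(A^+)$ from $(A^-)$ according to whether $A$ appears as a sub- or as a quotient object. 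In the rank-zero case the class is supported on a $(-2)$-curve $C$ and represented by the line bundles $\mca O_C(k)$, and the sequence $0 \to \mca O_C(k) \to \mca O_C(k+1) \to \mca O_x \to 0$ for $x \in C$ gives the filtration of $(C_k)$; for $x \notin C$ these curve-supported factors admit no nonzero maps to or from $\mca O_x$, so $\mca O_x$ stays stable. Independence of $A$ (resp.\ $C$) from $x$ follows because the class, hence the spherical object realising it, is pinned down by the single wall and not by $x$.

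The hard part will be combining the lattice enumeration with the $\hom^1$-budget argument so as to rule out every configuration with more factors than those listed, and then upgrading the purely numerical classification to the level of honest objects. The latter rests on the uniqueness of spherical objects in a fixed spherical class inside $U(X)$ and on the rigidity and stability used to realise the positive-rank class by a genuine $\mu$-stable locally free sheaf; keeping these two layers — numerical and object-theoretic — in step is where I expect the real work to lie.
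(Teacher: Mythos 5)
The first thing to note is that the paper does not prove this statement at all: Theorem \ref{wall} is quoted verbatim from Bridgeland (\cite[Theorem 12.1]{Bri2}), and its proof lives in that reference, where it rests on a long analysis of spherical stable objects, the wall structure of $\partial U(X)$, and the behaviour of the skyscrapers $\mca O_x$ near the boundary. So your attempt is not being measured against an argument in this paper but against Bridgeland's; judged on its own merits, your sketch does reproduce some genuine ingredients of that argument (JH factors of a common phase, a $\hom^1$-budget in the spirit of Lemma \ref{hom1}, a single spherical or curve class attached to a general boundary point), but it is not a proof, and two of its pivotal steps are wrong or unjustified as stated.

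First, your Schur-plus-Serre identity $\hom^0_X(S_i,S_j)=\delta_{ij}$, and hence the balance identity $\sum_{i,j}\hom^1_X(S_i,S_j)=2m$, fails exactly in the cases being classified: in $(A^{\pm})$ the factor $A$ occurs with multiplicity $\rank A$, so for $i\neq j$ with $S_i\cong S_j\cong A$ one has $\hom^0_X(S_i,S_j)=1$, not $0$. Second, the lattice step has no basis in the generality claimed: the classes of objects whose phase equals that of $\mca O_x$ form the kernel of a single real-linear condition ($Z(v)\in\bb R\,Z(\mca O_x)$) inside $\mca N(X)_{\bb R}$, hence a corank-one sublattice of rank $\rho(X)+1$; this equals $2$ only when the Picard number is one, whereas the theorem concerns arbitrary projective K3 surfaces (and this paper needs it as such). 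Generality of $\sigma$ means that $\pi(\sigma)$ is orthogonal to essentially one $(-2)$-class in $\mca N(X)$; it does not force your sublattice $\Lambda$ to have rank $2$, so the ``enumeration inside a hyperbolic rank-two lattice'' collapses. Finally, the two points you explicitly defer --- ruling out configurations with more stable factors, and upgrading numerical classes to honest objects (uniqueness and local freeness of the spherical stable object in its class, effectivity of the rank-zero class by an actual $(-2)$-curve, and stability of $\mca O_x$ for $x\notin C$, which requires excluding \emph{all} potential destabilizers, not merely the curve-supported ones) --- are precisely where the real proof does its work, so conceding them leaves the core of the theorem unproved.
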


We recall the map $\Phi_*:\Stab(Y) \to \Stab (X)$ induced by an equivalence $\Phi :D(Y) \to D(X)$. 
Let $X$ and $Y$ be projective K3 surfaces, and $\Phi:D(Y) \to D(X)$ an equivalence. 
Then $\Phi$ induces a natural morphism $\Phi_* : \Stab(Y) \to \Stab(X)$ as follows:
\[
\begin{matrix}
	&	\Phi_* : \Stab(Y) \to  \Stab (X),\  	
		\Phi _*\bigl( (\mca A_Y, Z_Y) \bigr)= (\mca A_X, Z_X)	& \\
	& \mbox{where }Z_X(E) = Z_Y \bigl( \Phi ^{-1}(E) \bigr) ,\ 
	\mbox{and }\mca A_X = \Phi ( \mca A_Y ) .
\end{matrix}
\]
Then the following proposition is almost obvious. 

\begin{prop}(\cite[Proposition 6.1]{Kaw})\label{6.1}
Let $X$ and $Y$ be projective K3 surfaces, and $\Phi:D(Y) \to D(X)$ an equivalence. 
For $\sigma \in U(X)$, $\sigma$ is in $\Phi _* (U(Y) )$ if and only if $\Phi (\mca O_y) $ is $\sigma$-stable with the same phase 
for all closed points $y \in Y$. 
\end{prop}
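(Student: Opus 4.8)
The plan is to derive the equivalence from the intrinsic description of $U$ in Proposition \ref{good}, together with the elementary fact that $\Phi_*$ is constructed precisely so as to intertwine stability on $D(Y)$ with stability on $D(X)$. Recall that $\Phi_*$ is a bijection $\Stab(Y)\to\Stab(X)$ with inverse $(\Phi^{-1})_*$. Writing $\tau=(\mca A_Y,Z_Y)$ and $\Phi_*(\tau)=\sigma=(\mca A_X,Z_X)$, we have $\mca A_X=\Phi(\mca A_Y)$ and $Z_X(E)=Z_Y(\Phi^{-1}(E))$. Since $\Phi$ is a triangulated equivalence carrying the heart $\mca A_Y$ onto $\mca A_X$ and preserving the central charge, an object $F\in D(Y)$ is $\tau$-(semi)stable if and only if $\Phi(F)$ is $\sigma$-(semi)stable, and the phases agree, $\arg Z_X(\Phi(F))=\arg Z_Y(F)$. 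I will use this phase-preserving property, valid for any equivalence, as the engine of both implications.

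For the direction $(\Rightarrow)$, suppose $\sigma=\Phi_*(\tau)$ with $\tau\in U(Y)$. By Proposition \ref{good} applied to $Y$, every $\mca O_y$ is $\tau$-stable and the values $\arg Z_Y(\mca O_y)$ share a common phase. Transporting along $\Phi$ via the phase-preserving property, each $\Phi(\mca O_y)$ is $\sigma$-stable with $\arg Z_X(\Phi(\mca O_y))=\arg Z_Y(\mca O_y)$, so these phases again share a common value. This is exactly the asserted conclusion.

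For the direction $(\Leftarrow)$, assume $\sigma\in U(X)$ and that all $\Phi(\mca O_y)$ are $\sigma$-stable with a common phase. Set $\tau:=(\Phi^{-1})_*(\sigma)=\Phi_*^{-1}(\sigma)\in\Stab(Y)$; it suffices to prove $\tau\in U(Y)$, since then $\sigma=\Phi_*(\tau)\in\Phi_*(U(Y))$. Applying the phase-preserving property to the equivalence $\Phi^{-1}$, the object $\mca O_y=\Phi^{-1}(\Phi(\mca O_y))$ is $\tau$-stable with common phase. By Proposition \ref{good} for $Y$, it then remains only to verify that $\tau$ is \emph{good}.

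Goodness is a condition on $\pi(\tau)=Z_Y^{\vee}\in\mca N(Y)$, and this is where the cohomological Fourier--Mukai transform enters. The equivalence $\Phi$ induces an isometry $\Phi^H:\mca N(Y)\to\mca N(X)$ of the Mukai lattices with $v(\Phi(E))=\Phi^H(v(E))$; combining this with $Z_X(E)=Z_Y(\Phi^{-1}(E))$ and the fact that $\Phi^H$ preserves the pairing $\<-,-\>$ yields $\pi(\sigma)=\Phi^H(\pi(\tau))$. As an isometry, $\Phi^H$ carries the positive real $2$-plane spanned by (the real and imaginary parts of) $\pi(\tau)$ to the corresponding $2$-plane of $\pi(\sigma)$, preserving positivity, and restricts to a bijection of the $(-2)$-vectors preserving orthogonality; hence $\pi(\tau)$ is good precisely when $\pi(\sigma)$ is. Since $\sigma\in U(X)$ is good by Proposition \ref{good}, so is $\tau$, whence $\tau\in U(Y)$. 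The whole argument is formal: the forward implication is pure functoriality of $\Phi_*$, and the backward one adds only this last verification. I expect the single point requiring genuine care — rather than a true obstacle — to be the identity $\pi(\sigma)=\Phi^H(\pi(\tau))$ together with the observation that the Mukai isometry $\Phi^H$ preserves both the positivity of $2$-planes and the set of $(-2)$-vectors; everything else follows immediately from the definitions of $\Phi_*$ and of $Z$-stability.
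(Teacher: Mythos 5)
Your proof is correct and follows essentially the same route as the paper's (which defers to \cite[Proposition 6.1]{Kaw} as ``almost obvious''): the forward direction is pure functoriality of $\Phi_*$, and the backward direction transports $\sigma$ to $\tau=\Phi_*^{-1}(\sigma)$, checks stability and common phase of the $\mca O_y$, and verifies goodness via the induced Mukai-lattice isometry so that Proposition \ref{good} applies to $Y$. Your explicit verification that $\pi(\sigma)=\Phi^H(\pi(\tau))$ and that an isometry preserves positive $2$-planes and $(-2)$-vectors is exactly the content hidden in the paper's ``almost obvious.''
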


\section{Stability of classically stable sheaves}\label{A}
%section 4のことをsection Aとよぶことにする. 

The goal of this section is to show the $\sigma$-stability of Gieseker stable (or $\mu$-stable) sheaves on a projective K3 surface $X$ for some $\sigma \in \Stab (X)$. 

We first prepare a function (\ref{N}) which plays an important role in this section. 
Let $L_0$ be an ample line bundle on $X$ with $L_0^2=2d$. 
We define a subset $V(X)_{L_0}$ of $V(X) $ by 
\[
V(X)_{L_0} := \{ \sigma _{(\beta, \omega)} \in V(X) | (\beta , \omega) = (x L_0, y L_0) \mbox{ where }(x,y) \in \bb R^2  \}. 
\]
Take an element $\sigma_{(\beta, \omega)} \in V(X)_{L_0}$. 
For an arbitrary object $F \in D(X)$ we put the Mukai vector $v(F)$ by $v(F) = r_F \+ \delta _F \+ s_F$ .
We have the orthogonal decomposition of $\delta _F$ with respect to $L_0$ in $\mr{NS}(X)_{\bb R}$:
\[
\delta _F = n_F L_0 + \nu _F,
\]
where $\nu_F$ is in $\mr{NS}(X)_{\bb R}$ with $\nu _F L_0=0$. 
%Note that $n_F$ is in $\bb Q$. 
Then we have 
\begin{eqnarray*}
Z(F)	&=& \frac{v(F)^2}{2 r_F} + \frac{r_F}{2}\Bigl( \omega + \sqrt{-1}\bigl(\frac{\delta _F}{r_F} - \beta \bigr)   \Bigr)^2\\
		 &=& \frac{v(F)^2}{2 r_F} + \frac{r_F}{2}\Bigl( \omega + \sqrt{-1}\bigl(\frac{n_F L_0}{r_F} - \beta \bigr)   \Bigr)^2 - \frac{\nu _F^2}{2 r_F}. 
\end{eqnarray*}
We see that the imaginary part $\mf{Im}Z(F)$ of $Z(F)$ is given by $2\sqrt{-1}yd\lambda _F$ where $\lambda _F = n_F - r_F x$. 
Put $$Z^{L_0} (F) := Z(F) + \frac{\nu _F^2}{2r_F}.$$ 
We define a function $N_{A,E}(x,y)$ for objects $A$ and $E \in D(X)$ and for $\sigma_{(\beta, \omega)} \in V(X)_{L_0}$ by
%\begin{equation}
%\tilde N_{A,E}(x,y) := \lambda _E \mf{Re}Z(A) - \lambda _A \mf{Re}Z(E) 
%\end{equation}
%and
\begin{equation}
N_{A,E}(x,y) := \lambda _E \mf{Re}Z^{L_0}(A) - \lambda _A \mf{Re}Z^{L_0}(E), 
\label{N}
\end{equation}
where $\mf{Re}$ means taking the real part of a complex number. 

Now suppose that $E$ is a $\mu_{\omega}$-semistable torsion free sheaf 
where $\omega\in \mr{Amp}(X)$. 
For a stability condition $\sigma_{(\beta , \omega)} = (\mca A, Z) \in V(X)$ we see the following:
\begin{itemize}
\item $\mu_{\omega }(E) > \beta \omega \iff E \in \mca A$. 
\item $\mu_{\omega }(E) \leq \beta \omega \iff E \in \mca A[-1]$. 
\end{itemize}

We shall consider following three cases: $\mu_{\omega}(E) > \beta \omega$, $\mu_{\omega}(E) = \beta \omega$ and $\mu_{\omega}(E)  < \beta \omega$. 
We first treat the case when $\mu_{\omega}(E) > \beta \omega $. 

\begin{lem}\label{A.1}
Let $X $ be a projective K3 surface and $\sigma_{(\beta ,\omega)} =(\mca A, Z) \in V(X)$. 
Assume that $A \to E \to F $ is a non trivial distinguished triangle in $\mca A$. 
Namely $A $, $E$ and $F$ are in $\mca A$ (This means that the triangle gives a short exact sequence in $\mca A$. ). 

$(1)$ If $E$ is a torsion free sheaf then $A$ is also torsion free sheaf. 

$(2)$ In addition to $(1)$,  if $E$ is Gieseker stable with respect to $\omega$ then we have $p_{\omega }(A) < p_{\omega }(E)$. 

$(3)$ Let $L_0$ be an ample line bundle. In addition to $(2)$, assume $\sigma_{(\beta, \omega)} \in V(X)_{L_0}$ and $\mu_{\omega }(A)= \mu_{\omega }(E)$. Then we have $\arg Z(A) < \arg Z(E)$. 
\end{lem}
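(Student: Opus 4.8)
The plan is to study the short exact sequence $0 \to A \to E \to F \to 0$ in $\mca A_{(\beta,\omega)}$ through the long exact sequence of cohomology sheaves of the triangle $A \to E \to F \to A[1]$, using that any object of $\mca A_{(\beta,\omega)}$ has cohomology only in degrees $-1,0$, with $H^{-1}(-)\in\mca F_{(\beta,\omega)}$ and $H^0(-)\in\mca T_{(\beta,\omega)}$. For $(1)$, the hypothesis that $E$ is a sheaf gives $H^{-1}(E)=0$, so the sequence becomes $0 \to H^{-1}(A) \to 0 \to H^{-1}(F) \to H^0(A) \to E \to H^0(F) \to 0$. This immediately yields $H^{-1}(A)=0$, so $A=H^0(A)$ is a sheaf, and it identifies $\Ker(A\to E)$ with $H^{-1}(F)$. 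Since the torsion subsheaf of $A$ maps to $0$ in the torsion free sheaf $E$, it lies in $\Ker(A\to E)=H^{-1}(F)$, which is torsion free as an object of $\mca F_{(\beta,\omega)}$; hence the torsion of $A$ vanishes and $A$ is torsion free.

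For $(2)$ I would set $A':=\Im(A\to E)\subseteq E$ and use the sheaf sequence $0\to H^{-1}(F)\to A\to A'\to 0$ just produced. The function $G\mapsto r_G\,p_\omega(G)$ is additive on short exact sequences of sheaves, since its coefficients $\tfrac{\omega^2}{2}r_G$, $\delta_G\omega$ and $s_G+r_G$ are additive, so $r_A\,p_\omega(A)=r_{H^{-1}(F)}\,p_\omega(H^{-1}(F))+r_{A'}\,p_\omega(A')$. I would bound each summand: Gieseker stability of $E$ gives $p_\omega(A')\le p_\omega(E)$, strict unless $A'=E$; and $H^{-1}(F)\in\mca F_{(\beta,\omega)}$ has $\mu_\omega(H^{-1}(F))\le\mu_\omega^+(H^{-1}(F))\le\beta\omega<\mu_\omega(E)$, and the strictly smaller slope forces $p_\omega(H^{-1}(F))<p_\omega(E)$. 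A short case check (nontriviality of the triangle excludes $H^{-1}(F)=0$ together with $A'=E$, and $\mca T_{(\beta,\omega)}\cap\mca F_{(\beta,\omega)}=\{0\}$ excludes $r_{A'}=0$) shows at least one summand is strict, so $p_\omega(A)<p_\omega(E)$.

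For $(3)$ I would reduce the comparison of phases to a comparison of central charges. With $(\beta,\omega)=(xL_0,yL_0)$ one computes $\mf{Im}\,Z(F)=2dy\lambda_F$ and $\mf{Re}\,Z(F)=2dx\,n_F-s_F-dx^2 r_F+dy^2 r_F$ (where $L_0^2=2d$, and the $\nu_F$ term drops because $\beta,\omega\in\bb R L_0$). Since $E$ is Gieseker stable it is $\mu_\omega$-semistable and lies in $\mca T_{(\beta,\omega)}$, so $\mu_\omega(E)>\beta\omega$; the hypothesis $\mu_\omega(A)=\mu_\omega(E)$ then gives $\mf{Im}\,Z(A)/r_A=\mf{Im}\,Z(E)/r_E>0$ (ampleness gives $d,y>0$). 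Hence $Z(A),Z(E)$ lie in the open upper half plane, and $\arg Z(A)<\arg Z(E)$ is equivalent to $\mf{Re}\,Z(A)/r_A>\mf{Re}\,Z(E)/r_E$. The equality of slopes also gives $n_A/r_A=n_E/r_E$, which cancels the $x$-term and leaves $\mf{Re}\,Z(A)/r_A-\mf{Re}\,Z(E)/r_E=s_E/r_E-s_A/r_A$. Finally, applying $(2)$ in the equal-slope case, where $p_\omega(A)<p_\omega(E)$ reduces to the comparison of constant terms $s_A/r_A<s_E/r_E$, makes this difference positive, giving $\arg Z(A)<\arg Z(E)$.

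The main obstacle I anticipate is the bookkeeping in $(1)$ and $(3)$. In $(1)$ the crucial point is the identification of $\Ker(A\to E)$ with the torsion free sheaf $H^{-1}(F)$, which is exactly what kills the torsion of $A$. In $(3)$ the key observation is that the single hypothesis $\mu_\omega(A)=\mu_\omega(E)$ does double duty: it places both $Z(A)$ and $Z(E)$ strictly in the upper half plane so that phases are genuinely comparable, and it annihilates the $x$-dependent part of the real parts, so the phase comparison collapses precisely onto the Gieseker constant-term inequality furnished by $(2)$.
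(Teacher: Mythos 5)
Your proposal is correct and follows essentially the same route as the paper's own proof: both arguments split the map $A\to E$ through the exact sequence $0\to H^{-1}(F)\to A\to \Im f\to 0$, bound the kernel using $\mu_\omega^+(H^{-1}(F))\le\beta\omega$, bound the image using Gieseker stability of $E$, and in $(3)$ reduce the phase comparison to the inequality $s_A/r_A<s_E/r_E$ supplied by $(2)$ in the equal-slope case. The only differences are bookkeeping: in $(2)$ you combine the two bounds via additivity of $r\,p_{\omega}$ (a weighted average) where the paper chains the slope inequalities $\mu_{\omega}(H^{-1}(F))<\mu_{\omega}(A)<\mu_{\omega}(\Im f)$, and in $(3)$ you compute $\mf{Re}\,Z$ and $\mf{Im}\,Z$ explicitly in coordinates where the paper isolates the rank-normalized common summand via its auxiliary function $J$.
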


\begin{proof}
Let $H^i(F)$ be the $i$-th cohomology of $F$. Since $F$ is in $\mca A$, $H^i(F)$ is $0$ unless $i$ is $0$ or $-1$. 
Then one can easily check the first assertion by taking cohomologies to the given distinguished triangle and by this fact. 
Hence we start the proof of the second assertion $(2)$. 

We have the following exact sequence of sheaves:
\[
\begin{CD}
0 @>>> H^{-1}(F) @>>> A @>f>> E @>>> H^0(F) @>>> 0.
\end{CD}
\]

Suppose that $H^{-1}(F)$ is not $0$. 
One can easily see 
\[
\mu_{\omega }(H^{-1}(F)) \leq \mu_{\omega }^+(H^{-1}(F)) \leq \beta \omega < \mu_{\omega }^{-}(A) \leq \mu_{\omega }(A). 
\]
Thus we have $\mu_{\omega }(H^{-1}(F)) < \mu_{\omega}(A) < \mu_{\omega }(\Im f)$ where $\Im f$ is the image of the morphism $f :A \to E$. 
Thus we have $p_{\omega }(A) < p_{\omega }(\Im f) \leq p_{\omega }(E)$ since $E$ is Gieseker stable with respect to $\omega$. 

Suppose that $H^{-1}(F) =0$. Then $A$ is a subsheaf of $E$. Since $E$ is Gieseker stable, the assertion is obvious. 

Let us prove the third assertion. 
We put $L_0^2 =2d$. 
For $E$ and $A$ in $D(X)$ we put $v(E) = r_E \+ \delta _E \+ s_E$ and $v(A) = r_A \+ \delta_A \+ s_A$. 
We decompose $\delta _E$ and $\delta _A$ by 
\[
\delta _E = n_E L _0+ \nu _E\mbox{ and }\delta _A = n_A L_0 + \nu _A,
\]
where $\nu _E$ and $\nu _A$ are in $\mr{NS}(X)_{\bb R}$ with $\nu_E L_0 = \nu _A L_0 =0$. 
We remark that both $\nu _E^2 $ and $\nu _A^2$ are semi negative and 
that the number $m_A = \delta _A L_0$ (respectively $m_E = \delta _E L_0$) is an integer. 
Then we have $n_A = m_A/2d$ (respectively $n_E = m_E /2d$) and  
\[
\frac{Z(A)}{r_A} = \frac{v(A)^2}{2r_A^2} + \frac{1}{2}\Bigl( \omega +\sqrt{-1}\bigl( \frac{n_A L_0}{r_A} - \beta \bigr) \Bigr)^2 - \frac{\nu _A^2}{2r_A^2}. 
\]

Now we put $J(A) = \frac{1}{2r_A^2}(v(A)^2-\nu _A^2)$ and $J(E) = \frac{1}{2r_E^2}(v(E)^2-\nu _E^2)$. 
Then we see 
\[
J(A) = \frac{1}{2}\Bigr( \frac{n_A^2L_0^2}{r_A^2}- \frac{s_A}{r_A} \Bigl) \mbox{ and } 
J(E) = \frac{1}{2}\Bigr( \frac{n_E^2L_0^2}{r_E^2}- \frac{s_E}{r_E} \Bigl). 
\]
Since $\mu_{\omega }(E) = \mu_{\omega }(A)$ we see $\frac{Z(E)}{r_E}-J(E) = \frac{Z(A)}{r_A}-J(A)$. 
Thus we see $\arg Z(A) < \arg Z(E)$ if and only if $J(E) < J(A)$. 
Since $p_{\omega}(A) < p _{\omega}(E)$ and $\mu_{\omega}(A) = \mu_{\omega }(E)$, we see 
\begin{equation}
\frac{n_A}{r_A} = \frac{n_E}{r_E} \mbox{ and } \frac{s_A }{r_A} < \frac{s_E}{r_E}. \label{kome}
\end{equation}
Then the inequality $J(E) < J(A)$ follows from the inequality (\ref{kome}). 
Thus we have finished the proof. 
\end{proof}

\begin{lem}\label{A.2}
Let $X$ be a projective K3 surface, let $L_0$ be an ample line bundle on $X$ and let $\sigma_{(\beta ,\omega)}=(\mca A, Z) \in V(X)_{L_0}$. 
Assume that $A \to E \to F$ is a distinguished triangle in $\mca A$ with $\hom_X^0(A,A)=1$ and that 
$E$ is a  torsion free sheaf with $\delta _E = n_E L_0$ for an integer $n_E$ where we put $v(E) = r_E \+ \delta _E \+ s_E$. 

$(1)$ If $v(E)^2=-2$, $\mu_{\omega }(A) < \mu_{\omega }(E)$ and $(\delta _E L_0 - r_E \beta L_0) \leq \frac{\omega ^2}{2}$ then $\arg Z(A) < \arg Z(E)$. 

$(2)$ If $v(E)^2 \geq 0$, $\mu_{\omega }(A) < \mu_{\omega }(E)$ and 
$$(\delta _E L_0 - r_E \beta L_0)\Bigl(\frac{v(E)^2}{2r_E} +1\Bigr)   \leq \frac{\omega ^2}{2}$$
then $\arg Z(A) < \arg Z(E)$. 
\end{lem}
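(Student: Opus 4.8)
The plan is to reduce the phase inequality to a positivity statement about the function $N_{A,E}$. Since $A$ and $E$ both lie in $\mca A$, their phases $\arg Z(A)$ and $\arg Z(E)$ lie in $(0,1]$, so $\arg Z(A)<\arg Z(E)$ is equivalent to positivity of the cross term $\mf{Re}Z(A)\,\mf{Im}Z(E)-\mf{Im}Z(A)\,\mf{Re}Z(E)>0$. Recalling $\mf{Im}Z(F)=2dy\lambda_F$ and that $y>0$, this reduces to $\lambda_E\mf{Re}Z(A)-\lambda_A\mf{Re}Z(E)>0$. By Lemma \ref{A.1}(1), $A$ is a torsion free sheaf, so $A$ and $E$ both lie in $\mca T_{(\beta,\omega)}$, whence $\lambda_A,\lambda_E>0$. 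Because $\delta_E=n_EL_0$ forces $\nu_E=0$ we have $\mf{Re}Z(E)=\mf{Re}Z^{L_0}(E)$, while $\mf{Re}Z(A)=\mf{Re}Z^{L_0}(A)-\nu_A^2/2r_A$ with $\nu_A^2\le 0$; since $\lambda_E>0$, the quantity equals $N_{A,E}(x,y)-\lambda_E\nu_A^2/2r_A\ge N_{A,E}(x,y)$. Thus it suffices to prove $N_{A,E}(x,y)>0$.

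Next I would compute $N_{A,E}$ explicitly. Substituting $\mf{Re}Z^{L_0}(F)=\dfrac{v(F)^2}{2r_F}+r_Fdy^2-\dfrac{d\lambda_F^2}{r_F}$ gives
\[
N_{A,E}=\frac{\lambda_E v(A)^2}{2r_A}-\frac{\lambda_A v(E)^2}{2r_E}+dk\Bigl(y^2+\frac{\lambda_A\lambda_E}{r_Ar_E}\Bigr),\qquad k:=\lambda_Er_A-\lambda_Ar_E=n_Er_A-n_Ar_E.
\]
Here $k>0$ because $\mu_\omega(A)<\mu_\omega(E)$, and, crucially, $2dk=2dn_Er_A-(\delta_AL_0)r_E$ is a \emph{positive integer}, so $2dk\ge 1$. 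In particular $N_{A,E}$ is affine and strictly increasing in $y^2$, with positive slope $dk$.

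The only numerical input from the hypotheses is $v(A)^2\ge -2$: from $\hom^0(A,A)=1$, Serre duality and Riemann--Roch give $-v(A)^2=\chi(A,A)=2\hom^0(A,A)-\hom^1(A,A)\le 2$. In case $(1)$, where $v(E)^2=-2$, the term $-\lambda_Av(E)^2/2r_E$ becomes $+\lambda_A/r_E>0$, so only $\lambda_Ev(A)^2/2r_A\ge -\lambda_E/r_A$ is negative. The hypothesis reads $2d\lambda_E\le \omega^2/2=dy^2$, i.e. $y^2\ge 2\lambda_E$, whence $dky^2\ge 2dk\lambda_E\ge\lambda_E$. Therefore $N_{A,E}\ge\lambda_E(1-1/r_A)+\lambda_A/r_E+dk\lambda_A\lambda_E/r_Ar_E>0$ since $r_A\ge 1$. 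This settles $(1)$ by purely numerical means.

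Case $(2)$ is the main obstacle. For $v(E)^2\ge 0$ the term $-\lambda_Av(E)^2/2r_E$ is now \emph{negative}, and whether the slope term can absorb it comes down to comparing $2dk\,\lambda_E$ with $\lambda_A$; this \emph{cannot} be decided from the Mukai vectors alone, as the corresponding numerical inequality is genuinely false for some numerically admissible triples $(v(A),v(E),\sigma)$. The way I would close the gap is to feed in the sheaf structure of the triangle. From the long exact sequence $0\to H^{-1}(F)\to A\xrightarrow{f}E\to H^0(F)\to 0$ one factors $f$ as $A\twoheadrightarrow B\hookrightarrow E$ with $B=\Im f$ a subsheaf of $E$ and $\ker f=H^{-1}(F)\in\mca F_{(\beta,\omega)}$. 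The torsion-pair memberships $H^{-1}(F)\in\mca F_{(\beta,\omega)}$ and $H^0(F)\in\mca T_{(\beta,\omega)}$ yield $r_B\le r_E$, $\delta_BL_0\le\delta_EL_0$ and the slope bound $\mu_\omega^+(\ker f)\le\beta\omega$, which together control $\delta_AL_0$ — hence $\lambda_A$ — in terms of $\delta_EL_0$ and $\lambda_E$. Combined with $2dk\ge 1$ and the sharper hypothesis $2d\lambda_E\bigl(\tfrac{v(E)^2}{2r_E}+1\bigr)\le\omega^2/2$, this is exactly what lets $dky^2$ swallow the negative $v(E)^2$-term and forces $N_{A,E}>0$. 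I expect the delicate bookkeeping of these torsion-pair inequalities against the $\omega$-bound, rather than any single estimate, to be the real difficulty.
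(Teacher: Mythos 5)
Your reduction to $N_{A,E}(x,y)>0$ and your proof of case $(1)$ are correct, and they follow the paper's own route: the same expansion of $N_{A,E}$, the same use of $v(A)^2\ge -2$ coming from $\hom_X^0(A,A)=1$, and the same integrality observation that $2dk=2d(n_Er_A-n_Ar_E)$ is a positive integer, hence $2dk\ge 1$. The problem is case $(2)$, where you assert that the comparison of $2dk\,\lambda_E$ with $\lambda_A$ ``cannot be decided from the Mukai vectors alone'' and that the numerical inequality ``is genuinely false for some numerically admissible triples.'' That assertion is wrong, and it is wrong because you have dropped one numerical consequence of the hypotheses that you never use: since $F\in\mca A$ and $Z$ is a stability function on $\mca A$, we have $\mf{Im}Z(F)=\mf{Im}Z(E)-\mf{Im}Z(A)=2dy(\lambda_E-\lambda_A)\ge 0$, i.e.\ $\lambda_A\le\lambda_E$. (The paper records exactly this inequality at the start of its proof of Lemma \ref{A.2}.) With it, case $(2)$ closes by the same two lines as your case $(1)$: dropping the strictly positive term $dk\lambda_A\lambda_E/(r_Ar_E)$ and using $v(A)^2\ge -2$, $r_A\ge 1$, $v(E)^2\ge 0$ and $0<\lambda_A\le\lambda_E$,
\[
N_{A,E}\;>\;dky^2-\frac{\lambda_E}{r_A}-\frac{\lambda_A v(E)^2}{2r_E}\;\ge\;dky^2-\lambda_E\Bigl(\frac{v(E)^2}{2r_E}+1\Bigr),
\]
and the hypothesis $2d\lambda_E\bigl(\frac{v(E)^2}{2r_E}+1\bigr)=(\delta_EL_0-r_E\beta L_0)\bigl(\frac{v(E)^2}{2r_E}+1\bigr)\le dy^2$, multiplied by $k$ and combined with $k\ge\frac{1}{2d}$, gives $dky^2\ge\lambda_E\bigl(\frac{v(E)^2}{2r_E}+1\bigr)$. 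Hence $N_{A,E}>0$.

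Consequently the sheaf-theoretic detour you propose for case $(2)$ (factoring $f$ through its image and bookkeeping the torsion-pair memberships of $H^{-1}(F)$ and $H^0(F)$) is unnecessary, and in any event it is only a plan: you never carry out the ``delicate bookkeeping,'' so on its own terms your case $(2)$ is incomplete. Note that the only places where the sheaf structure of the triangle enters the paper's argument are the two facts you already extracted at the outset — $A$ is torsion free (Lemma \ref{A.1}), and $F\in\mca A$ forces $\mf{Im}Z(F)\ge 0$ — after which everything, in both cases, is pure numerics.
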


\begin{proof}
We first note that $A$ is a torsion free sheaf by Lemma \ref{A.1}. 
Since there is no $\sigma_{(\beta, \omega)}$-stable torsion free sheaf with phase $1$ (See \cite[Remark 3.5 (1)]{Kaw} or \cite[Lemma 10.1]{Bri2}), 
we see $\mu_{\omega }(A)>\beta \omega$. 
For the Mukai vector $v(A)= r_A \+ \delta _A \+ s_A$ of $A$ we put 
\[
\delta _A = n_A L_0 + \nu _A,
\]
where $\nu_A$ is in $\mr{NS}(X)_{\bb R}$ with $\nu _A L_0=0$. 
Then we have 
\begin{eqnarray*}
Z(A)	&=& \frac{v(A)^2}{2 r_A} + \frac{r_A}{2}\Bigl(\omega +\sqrt{-1}\bigl(\frac{n_A}{r_A}L_0 - r_A \beta  \bigr)   \Bigr) -\frac{\nu _A^2}{2r_A}\\
		&=& Z^{L_0}(A) - \frac{\nu _A^2}{2r_A}. 
\end{eqnarray*}
We note that both $\lambda _A = n_A - r_A x$ and $\lambda _E = n_E - r_E x$ are positive. 
Since $F$ is in $\mca A$, we have $\mf{Im}Z(F) \geq 0$. Thus we see $\lambda _A \leq \lambda _E$. 
Since $\nu _A^2 \leq 0$, we see $\arg Z(A) \leq  \arg Z^{L_0} (A)$. 
Thus it is enough to show that $\arg Z^{L_0}(A) < \arg Z(E)$. 
Since $\mf{Im}Z(A)= \mf{Im}Z^{L_0}(A)= 2yd \lambda _A >0$ and $\mf{Im }Z(E)= 2yd\lambda _E >0$, we see
\[
\arg Z^{L_0}(A) < \arg Z(E) \iff 0< N_{A,E}(x,y). 
\]
Note that $Z^{L_0}(E)=Z(E)$. 

Now we have
\begin{eqnarray*}
N_{A,E}(x,y)	&=&	\lambda _E \mf{Re}Z^{L_0}(A) - \lambda _A \mf{Re}Z(E)\\	
				&=&	\lambda _E \bigl(\frac{v(A)^2}{2r_A}+ dr_Ay^2 - \frac{d \lambda _A^2}{r_A} \bigr) -\lambda _A \bigl(\frac{v(E)^2}{2r_E}+ dr_Ey^2 - \frac{d \lambda _E^2}{r_E} \bigr)\\
				&=&	dy^2(r_A n_E - r_E n_A ) + \lambda _E\frac{v(A)^2}{2 r_A}- \lambda _A \frac{v(E)^2}{2r_E} + d\lambda _A \lambda _E \bigl( \frac{n_E}{r_E}- \frac{n_A}{ r_A} \bigr). 
\end{eqnarray*}
Since $\mu_{\omega}(A) < \mu _{\omega }(E)$ we have $\frac{n_E}{r_E}-\frac{n_A}{r_A}>0$ and $r_A n_E - r_E n_A >0$. 
Since the last term $d\lambda _A \lambda _E \bigl( \frac{n_E}{r_E}- \frac{n_A}{ r_A} \bigr)$ is positive, we have 
\[
N_{A,E}(x,y) > N'_{A,E}(x,y):= dy^2(r_A n_E - r_E n_A ) + \lambda _E\frac{v(A)^2}{2 r_A}- \lambda _A \frac{v(E)^2}{2r_E}.
\]
Since $\hom_X^0(A,A)=1$ we have $v(A)^2\geq -2$. 
Thus we see
\[
N'_{A,E}(x,y) \geq N''_{A,E}(x,y):= dy^2(r_A n_E - r_E n_A ) -\frac{\lambda _E}{ r_A}- \lambda _A \frac{v(E)^2}{2r_E}. 
\]
Hence it is enough to prove $N_{A,E}''(x,y) \geq 0$. 

Let us prove the first assertion $(1)$. 
Since $v(E)^2=-2$ we have 
\begin{eqnarray*}
 N''_{A,E}(x,y)	&=&	dy^2(r_A n_E - r_E n_A ) -\frac{\lambda _E}{ r_A}+\frac{\lambda _A}{r_E} \notag \\ 
 				&>&	dy^2(r_A n_E - r_E n_A ) -\frac{\lambda _E}{ r_A} \label{kimete}
\end{eqnarray*}
We shall show 
\[
0<dy^2(r_A n_E - r_E n_A ) -\frac{\lambda _E}{ r_A} .
\]
Since $n_A = m_A/2d$ with $m_A $ and $d \in \bb Z$,  
we see 
\begin{eqnarray}
\frac{\omega^2}{2} = dy^2 > (\delta _E L_0- r_E \beta L_0) &=& 2d  \lambda _E \notag\\
															&\geq &\frac{2d  \lambda _E}{r_A(2dr_A n_E - r_E m_A)}\notag\\ 
										&=&\frac{\lambda _E}{r_A(r_A n_E - r_E n_A)}. \notag
\end{eqnarray}
Hence we have 
\[
dy^2 (r_A n_E - r_E n_A) - \frac{\lambda _E}{r_A} \geq 0. 
\]
by $r_A n_E -r_E n_A>0$. 
Thus we have proved the assertion.

Let us prove the second assertion. Essentially the proof is the same as the one of the first assertion. 
Assume that $v(E)^2 \geq 0$. 
It is enough to show that $N'' _{A,E}(x,y) \geq 0$. 
Since $0 < \lambda _A \leq \lambda _E$ we have 
\begin{eqnarray}
N''_{A,E}(x,y)	&=&	dy^2(r_A n_E -r_E n_A) - \frac{\lambda _E}{r_A} -\mu_{A}\frac{v(E)^2}{2r_E} \notag \\
				&\geq & dy^2(r_A n_E -r_E n_A) - {\lambda _E} -\mu_{E}\frac{v(E)^2}{2r_E} \label{kimete2}
\end{eqnarray}
Hence it is enough to show that $dy^2(r_A n_E -r_E n_A) - \lambda _E -\mu_{E}\frac{v(E)^2}{2r_E}  \geq 0$. 
Similarly to the first assertion, one can easily prove this inequality by using the assumption 
\[
\frac{\omega ^2}{2}  \geq ( \delta _E L_0 - r_E \beta L_0  )\Bigl( \frac{v(E)^2}{2 r_E} +1\Bigr).
\]
Thus we have proved the second assertion. 
\end{proof}

\begin{cor}\label{A.3}
Notations and assumptions are being as Lemma \ref{A.2}. 
Furthermore we assume that $\mr{NS}(X) = \bb Z L_0$. 

$(1)$ If $v(E)^2=-2$, $\mu_{\omega}(A) < \mu_{\omega}(E)$ and 
$\frac{1}{L_0^2}(\delta _E L_0 - r_E \beta L_0) \leq \frac{\omega ^2}{2}$, 
then $\arg Z(A) < \arg Z(E)$. 

$(2)$ If $v(E)^2 \geq 0$, $\mu_{\omega}(A) < \mu_{\omega}(E)$ and 
\[
\frac{1}{L_0^2}(\delta _E L_0 - r_E \beta L_0) \Bigl( \frac{v(E)^2}{2r_E}+1 \Bigr) \leq \frac{\omega ^2}{2},
\] 
then $\arg Z(A) < \arg Z(E)$. 
\end{cor}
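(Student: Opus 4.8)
The plan is to re-run the proof of Lemma \ref{A.2}, exploiting the extra integrality forced by the additional hypothesis $\mr{NS}(X) = \bb Z L_0$. First I would record what that hypothesis buys us. Since every class in $\mr{NS}(X)$ is now an integer multiple of $L_0$, the Mukai vectors $v(A) = r_A \+ \delta_A \+ s_A$ and $v(E) = r_E \+ \delta_E \+ s_E$ satisfy $\delta_A = n_A L_0$ and $\delta_E = n_E L_0$ with $n_A, n_E \in \bb Z$; in particular $\nu_A = \nu_E = 0$, so all the assumptions of Lemma \ref{A.2} remain in force, and the genuinely new fact is that $n_A$ (not just $n_E$) is an integer. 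I would also translate the two hypotheses of the Corollary into the notation of that proof: because $\delta_E L_0 - r_E \beta L_0 = (n_E - r_E x)L_0^2 = L_0^2 \lambda_E$, the bound $\frac{1}{L_0^2}(\delta_E L_0 - r_E \beta L_0) \le \frac{\omega^2}{2}$ reads simply $\lambda_E \le \frac{\omega^2}{2} = d y^2$ in part $(1)$, and $\lambda_E\bigl(\frac{v(E)^2}{2r_E}+1\bigr) \le d y^2$ in part $(2)$.

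From here the formal part of the argument is identical to Lemma \ref{A.2}: $A$ is torsion free with $\mu_\omega(A) > \beta\omega$, one has $0 < \lambda_A \le \lambda_E$ from $\mf{Im}Z(F) \ge 0$, and $v(A)^2 \ge -2$ from $\hom^0_X(A,A)=1$, so the problem reduces to showing $N''_{A,E}(x,y) \ge 0$ where
\[
N''_{A,E}(x,y) = d y^2 (r_A n_E - r_E n_A) - \frac{\lambda_E}{r_A} - \lambda_A \frac{v(E)^2}{2 r_E}.
\]
The one and only place where the present hypothesis changes the estimate is the lower bound for the integer combination $r_A n_E - r_E n_A$, which is positive because $\mu_\omega(A) < \mu_\omega(E)$.

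This is the entire content of the Corollary. In the setting of Lemma \ref{A.2} one only knows $2d(r_A n_E - r_E n_A) = r_A m_E - r_E m_A \in \bb Z_{>0}$, i.e. the crude bound $r_A n_E - r_E n_A \ge \frac{1}{2d}$, and it is exactly this factor $L_0^2 = 2d$ that appears in the hypothesis there. Under $\mr{NS}(X) = \bb Z L_0$, however, $n_A$ and $n_E$ are themselves integers, so $r_A n_E - r_E n_A$ is a positive integer and hence $\ge 1$. With this sharp bound the conclusion drops out: in part $(1)$, using $r_A \ge 1$ and $v(E)^2 = -2$,
\[
N''_{A,E}(x,y) > d y^2 (r_A n_E - r_E n_A) - \frac{\lambda_E}{r_A} \ge d y^2 - \lambda_E \ge 0,
\]
while in part $(2)$, using $0 < \lambda_A \le \lambda_E$ and $v(E)^2 \ge 0$,
\[
N''_{A,E}(x,y) \ge d y^2 (r_A n_E - r_E n_A) - \lambda_E\Bigl(\frac{v(E)^2}{2 r_E}+1\Bigr) \ge d y^2 - \lambda_E\Bigl(\frac{v(E)^2}{2 r_E}+1\Bigr) \ge 0.
\]
Since $\mf{Im}Z(A), \mf{Im}Z(E) > 0$, the inequality $N_{A,E}(x,y) > 0$ is equivalent to $\arg Z(A) < \arg Z(E)$ just as in Lemma \ref{A.2}, completing the proof. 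I anticipate no real obstacle here, since the genuine difficulty has already been isolated in Lemma \ref{A.2}; the only point to be careful about is that $r_A n_E - r_E n_A$ is \emph{strictly} positive, so that being an integer yields $\ge 1$ rather than merely $\ge 0$, and this strict positivity is guaranteed by $\mu_\omega(A) < \mu_\omega(E)$ exactly as in the Lemma.
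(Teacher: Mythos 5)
Your proof is correct and is essentially the paper's own argument: the paper likewise re-runs the reduction from Lemma \ref{A.2} to the inequality $0 \leq dy^2(r_A n_E - r_E n_A) - \frac{\lambda_E}{r_A}$ (resp.\ its variant with the factor $\frac{v(E)^2}{2r_E}+1$), and then uses $\mr{NS}(X) = \bb Z L_0$ to conclude $n_A \in \bb Z$, hence $r_A n_E - r_E n_A \geq 1$, which is exactly the sharpened integrality bound you isolate as the whole content of the corollary.
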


\begin{proof}
We use the same notations as in the proof of Lemma \ref{A.2}. 

Let us prove the first assertion. 
Supposet that $v(E)^2=-2$. 
By using the same argument in the proof of Lemma \ref{A.2}, one can see that 
it is enough to show that 
\begin{equation}
0 \leq dy^2 (r_A n_E - r_E n_A)  -\frac{\lambda _E}{r_A}.\label{darui}
\end{equation}
Since $\mr{NS}(X)= \bb Z L_0$, we see $n_A \in \bb Z$.  
Thus the inequality (\ref{darui}) follows from the assumption $\frac{1}{L_0^2}(\delta _E L_0 - r_E \beta L_0) \leq \frac{\omega ^2}{2}$. 

One can easily prove the second assertion since the proof is essentially as the same as the first assertion. %Hence we omit the proof. 
In fact one can easily see that it is enough to show 
\begin{equation}
0 \leq dy^2 (r_A n_E - r_E n_A) -\frac{\lambda _E}{r_A}\Bigl( \frac{v(E)^2}{2r_E} +1 \Bigr) ,\label{darui2}
\end{equation}
instead of (\ref{darui}) as above. 
This inequality follows from the assumption $\frac{1}{L_0^2}(\delta _E L_0 - r_E \beta L_0) \Bigl( \frac{v(E)^2}{2r_E}+1 \Bigr) \leq \frac{\omega ^2}{2}.
$
\end{proof}

\begin{thm}\label{A.4}
Let $X$ be a projective K3 surface, $L_0$ an ample line bundle and $\sigma_{(\beta ,\omega)} = (\mca A , Z)\in V(X)_{L_0}$. 
We assume that $E$ is a Gieseker stable torsion free sheaf with respect to $L_0$ with $\mu_{\omega}(E) > \beta \omega $ and that the Mukai vector $v(E)$ is $ r_E \+ \delta _E \+ s_E$ with $\delta _E = n_E L_0$ for some $n_E \in \bb Z$.  

$(1)$ Assume that $v(E)^2=-2$. If $\delta _E L_0 - r_E \beta L_0 \leq \omega^2/2$, then $E$ is $\sigma_{(\beta , \omega)}$-stable.  

$(2)$ Assume that $v(E)^2\geq 0$. If $(\delta _E L_0 - r_E \beta L_0)(\frac{v(E)^2}{2r_E}+1) \leq \omega^2/2$, then $E$ is $\sigma_{(\beta , \omega)}$-stable.  
\end{thm}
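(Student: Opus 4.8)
The plan is to prove $\sigma_{(\beta,\omega)}$-stability directly from the structural Lemmas \ref{A.1} and \ref{A.2}, whose numerical bounds have clearly been engineered to match the two hypotheses of the theorem. Since $\mu_{\omega}(E) > \beta\omega$ forces $E \in \mca A$, the object $E$ is $\sigma_{(\beta,\omega)}$-stable precisely when every proper nonzero subobject $A \subset E$ in the abelian category $\mca A$ satisfies $\arg Z(A) < \arg Z(E)$. I would argue by contradiction: assume $E$ is not $\sigma_{(\beta,\omega)}$-stable, so that there exists a proper nonzero subobject of phase at least $\arg Z(E)$.

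The key preliminary step is to replace such a destabilizing subobject by a $\sigma$-stable one, so that the hypothesis $\hom_X^0(A,A)=1$ of Lemma \ref{A.2} becomes available. Using that $\sigma_{(\beta,\omega)} \in V(X) \subset \Stab(X)$ is locally finite, I would pass to the maximal Harder--Narashimhan factor of $E$ (in case $E$ is not $\sigma$-semistable) or, if $E$ is $\sigma$-semistable but not stable, to $E$ itself, and then to a $\sigma$-stable Jordan--H\"older sub-factor. This produces a $\sigma$-stable proper subobject $S \subset E$ with $\arg Z(S) \geq \arg Z(E)$. A $\sigma$-stable object is simple, so $\hom_X^0(S,S)=1$, and the short exact sequence $0 \to S \to E \to E/S \to 0$ is the required nontrivial distinguished triangle in $\mca A$; hence both Lemma \ref{A.1} and Lemma \ref{A.2} apply to $S \subset E$.

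Now I would run a dichotomy on slopes. By Lemma \ref{A.1}(1) the subobject $S$ is torsion free, and since $E$ is Gieseker stable with respect to $L_0$ (hence with respect to the proportional polarization $\omega = yL_0$), $E$ is $\mu_{\omega}$-semistable, so as in the proof of Lemma \ref{A.1} one gets $\mu_{\omega}(S) \leq \mu_{\omega}(E)$. If $\mu_{\omega}(S) = \mu_{\omega}(E)$, then Lemma \ref{A.1}(3) yields $\arg Z(S) < \arg Z(E)$. If $\mu_{\omega}(S) < \mu_{\omega}(E)$, then in case $(1)$ where $v(E)^2=-2$ the hypothesis $\delta_E L_0 - r_E \beta L_0 \leq \omega^2/2$ is exactly the bound demanded by Lemma \ref{A.2}(1), and in case $(2)$ where $v(E)^2 \geq 0$ the hypothesis $(\delta_E L_0 - r_E \beta L_0)(\tfrac{v(E)^2}{2r_E}+1) \leq \omega^2/2$ is exactly the bound demanded by Lemma \ref{A.2}(2); in either case we obtain $\arg Z(S) < \arg Z(E)$. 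Thus in all cases $\arg Z(S) < \arg Z(E)$, contradicting $\arg Z(S) \geq \arg Z(E)$, and so no destabilizing subobject exists.

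I expect the only delicate point to be the reduction step: one must verify that passing through the Harder--Narashimhan and Jordan--H\"older filtrations keeps $S$ a genuine subobject of $E$ in $\mca A$ while only increasing the phase, and that the resulting $\sigma$-stable factor is simple so that $\hom_X^0(S,S)=1$ really holds. Granting this, the remainder is a mechanical case split feeding the two tailored estimates of Lemmas \ref{A.1} and \ref{A.2}, with no further computation required.
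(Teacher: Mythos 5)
Your proof is correct and is essentially the paper's own argument: the paper likewise argues by contradiction, takes a $\sigma$-stable destabilizing subobject $A \subset E$ in $\mca A$ with $\arg Z(A) \geq \arg Z(E)$ (your Harder--Narashimhan/Jordan--H\"older reduction is left implicit there), uses Lemma \ref{A.1} to get torsion-freeness, $p_{\omega}(A) < p_{\omega}(E)$, hence $\mu_{\omega}(A) \leq \mu_{\omega}(E)$, and to settle the equal-slope case, and then invokes Lemma \ref{A.2} in the strict-slope case. The only difference is that you spell out the passage to a $\sigma$-stable subobject and the resulting simplicity $\hom^0_X(S,S)=1$ needed for Lemma \ref{A.2}, which the paper takes for granted.
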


\begin{proof}
Suppose to the contrary that $E$ is not $\sigma_{(\beta,\omega)}$-stable. 
Then there is a $\sigma_{(\beta, \omega)}$-stable subobject $A$ of $E$ in $\mca A$ with $\arg Z(A) \geq \arg Z(E)$ and 
we have the following distinguished triangle in $\mca A$:
\[
\begin{CD}
A@>>> E @>>> F @>>> A[1].
\end{CD}
\]
Since $E$ is Gieseker stable with respect to $\omega= yL_0$ we see that $A$ is a torsion free sheaf with $p_{\omega}(A) < p_{\omega}(E)$ by Lemma \ref{A.1}. 
Since $p_{\omega}(A) < p_{\omega }(E)$ we see $\mu_{\omega }(A) \leq \mu_{\omega }(E)$. 
If $\mu_{\omega}(A) = \mu_{\omega}(E)$ then $\arg Z(A) < \arg Z(E)$ by Lemma \ref{A.1}. 
Thus $\mu_{\omega }(A)$ should be strictly smaller than $\mu_{\omega}(E)$. 
Then whether $v(E)^2=-2$ or $v(E)^2 \geq 0$, we see $\arg Z(A) < \arg Z(E)$ by Lemma \ref{A.2}. 
Hence $E$ is a $\sigma_{(\beta,\omega)}$-stable. 
\end{proof}

\begin{cor}\label{A.5}
Notations and assumptions are being as Theorem \ref{A.4}. 
Furthermore we assume that $\mr{NS}(X) = \bb Z L_0$. 

$(1)$ Assume that $v(E)^2=-2$. If $\frac{1}{L_0^2}(\delta _E L_0 - r_E \beta L_0) \leq \omega^2 /2$, then $E$ is $\sigma_{(\beta, \omega)}$-stable. 

$(2)$ Assume that $v(E)^2\geq 0$. If $\frac{1}{L_0^2}(\delta _E L_0 - r_E \beta L_0) (\frac{v(E)^2}{2r_E}+1) \leq \omega^2 /2$, then $E$ is $\sigma_{(\beta, \omega)}$-stable. 
\end{cor}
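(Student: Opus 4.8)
The plan is to reproduce, essentially verbatim, the contrapositive argument used in the proof of Theorem \ref{A.4}, replacing the single appeal to Lemma \ref{A.2} by the corresponding appeal to Corollary \ref{A.3}. The whole point of the corollary is that the extra hypothesis $\mr{NS}(X) = \bb Z L_0$ is exactly the hypothesis under which Corollary \ref{A.3} sharpens Lemma \ref{A.2}, and the bounds displayed in the present statement are literally the bounds appearing in Corollary \ref{A.3}. So the argument is a transcription together with one bookkeeping check.

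First I would assume, for contradiction, that $E$ is not $\sigma_{(\beta,\omega)}$-stable. Since $E$ is a torsion free sheaf with $\mu_\omega(E) > \beta\omega$ it lies in $\mca A$, so there is a $\sigma_{(\beta,\omega)}$-stable subobject $A$ of $E$ in $\mca A$ with $\arg Z(A) \geq \arg Z(E)$, fitting into a distinguished triangle $A \to E \to F \to A[1]$ in $\mca A$. By parts $(1)$ and $(2)$ of Lemma \ref{A.1}, $A$ is a torsion free sheaf with $p_\omega(A) < p_\omega(E)$, whence $\mu_\omega(A) \leq \mu_\omega(E)$. Because $A$ is $\sigma_{(\beta,\omega)}$-stable it is simple, so $\hom_X^0(A,A) = 1$, which is the last hypothesis needed to invoke Corollary \ref{A.3}.

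Next I would separate the two slope cases. If $\mu_\omega(A) = \mu_\omega(E)$, then Lemma \ref{A.1} $(3)$ already forces $\arg Z(A) < \arg Z(E)$, contradicting the choice of $A$; hence in fact $\mu_\omega(A) < \mu_\omega(E)$. Then, according as $v(E)^2 = -2$ or $v(E)^2 \geq 0$, I apply part $(1)$ or part $(2)$ of Corollary \ref{A.3}: under the hypothesis $\frac{1}{L_0^2}(\delta_E L_0 - r_E \beta L_0) \leq \omega^2/2$ (respectively the same quantity multiplied by $\frac{v(E)^2}{2r_E}+1$), Corollary \ref{A.3} yields $\arg Z(A) < \arg Z(E)$, again a contradiction. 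Hence no destabilizing $A$ exists and $E$ is $\sigma_{(\beta,\omega)}$-stable.

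There is no genuine obstacle at this level, since all the arithmetic content has already been discharged inside Corollary \ref{A.3}: there the assumption $\mr{NS}(X) = \bb Z L_0$ is used to force $n_A \in \bb Z$ (rather than merely $n_A \in \frac{1}{2d}\bb Z$ as in Lemma \ref{A.2}), so that $r_A n_E - r_E n_A$ is a \emph{positive integer} and the key estimate $dy^2(r_A n_E - r_E n_A) \geq \lambda_E/r_A$ survives under the weaker bound. The only care needed is the bookkeeping that this integrality of $n_A$ upgrades the admissible bound precisely by the factor $L_0^2 = 2d$, so that the hypotheses of the present statement coincide with those of Corollary \ref{A.3}, and that the two Mukai-square cases $v(E)^2 = -2$ and $v(E)^2 \geq 0$ are routed to parts $(1)$ and $(2)$ of that corollary respectively.
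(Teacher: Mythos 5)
Your proposal is correct and is exactly the argument the paper intends: the paper explicitly omits the proof of Corollary \ref{A.5}, stating that it is the same as the proof of Theorem \ref{A.4} with Corollary \ref{A.3} used in place of Lemma \ref{A.2}, which is precisely your transcription (including the reduction to the strict-slope case via Lemma \ref{A.1}~$(3)$ and the observation that $\sigma$-stability of $A$ gives $\hom_X^0(A,A)=1$).
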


The proof of Corollary \ref{A.5} is essentially the same as the proof of Theorem \ref{A.4}. 
The difference is to use Corollary \ref{A.3} instead of Lemma \ref{A.2}. 
Hence we omit the proof. 
Next we consider the case ``$\mu_{\omega }(E) = \beta \omega$''.

\begin{prop}\label{A.6}
Let $X$ be a projective K3 surface and $\sigma_{(\beta, \omega)}=(\mca A,Z) \in V(X)$. Assume that the Mukai vector of an object $E \in D(X)$ is $r_E\+ \delta _E \+ s_E$ with $r_E \neq 0$ and $\delta _E \omega /r_E = \beta \omega $. 

$(1)$ If $E$ is a $\mu_{\omega}$-semistable torsion free sheaf then $E$ is $\sigma_{(\beta, \omega)}$-semistable with phase $0$. 

$(2)$ The object $E$ is a $\mu_{\omega}$-stable locally free sheaf if and only if $E$ is $\sigma_{(\beta ,\omega)}$-stable with phase $0$. 
\end{prop}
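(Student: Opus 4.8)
The plan is to exploit the fact that the hypothesis $\delta_E\omega/r_E = \beta\omega$ places $E$ exactly on the wall where $\mf{Im}Z_{(\beta,\omega)}$ vanishes, so that the relevant objects sit at the extremal phase of the heart $\mca A$. First I would locate $E$ in the heart: if $E$ is a $\mu_{\omega}$-semistable torsion free sheaf with $\mu_{\omega}(E)=\delta_E\omega/r_E=\beta\omega$, then $\mu_{\omega}^+(E)=\mu_{\omega}(E)=\beta\omega$, so $E\in\mca F_{(\beta,\omega)}$ and hence $E[1]\in\mca A$. A direct computation gives $\mf{Im}Z(E)=\omega\cdot(\delta_E-r_E\beta)=r_E(\mu_{\omega}(E)-\beta\omega)=0$, so $Z(E[1])=-Z(E)$ is real. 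Since $E[1]$ is a nonzero object of $\mca A$, the stability-function axiom forces $\arg Z(E[1])\in(0,1]$, and as $Z(E[1])$ is real this leaves only $\arg Z(E[1])=1$. Because $1$ is the maximal possible phase in $\mca A$, every subobject $A\subset E[1]$ automatically satisfies $\arg Z(A)\leq 1=\arg Z(E[1])$; thus $E[1]$ is $Z$-semistable and $E$ is $\sigma_{(\beta,\omega)}$-semistable with phase $\arg Z(E[1])-1=0$. This proves $(1)$.

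For the forward direction of $(2)$, a $\mu_{\omega}$-stable locally free $E$ is in particular $\mu_{\omega}$-semistable, so $(1)$ gives that $E[1]$ is $Z$-semistable of phase $1$; it remains to upgrade this to stability. I would take a nonzero proper subobject $0\to A\to E[1]\to Q\to 0$ in $\mca A$ with $\arg Z(A)=1$ and derive a contradiction. The cohomology sequence of this triangle reads $0\to H^{-1}(A)\to E\to H^{-1}(Q)\to H^0(A)\to 0$ with $H^0(Q)=0$. The condition $\mf{Im}Z(A)=0$ splits as $\mf{Im}Z(H^0(A))-\mf{Im}Z(H^{-1}(A))=0$ with the two summands of fixed opposite signs (nonnegative resp.\ nonpositive, by membership in $\mca T_{(\beta,\omega)}$ resp.\ $\mca F_{(\beta,\omega)}$), forcing $H^0(A)$ to be $0$-dimensional and $H^{-1}(A)$ to be either $0$ or a subsheaf of $E$ of slope $\beta\omega$. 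Using $\mu_{\omega}$-stability of $E$, a nonzero $H^{-1}(A)$ of slope $\beta\omega$ must have full rank, whence torsion-freeness of $H^{-1}(Q)\in\mca F_{(\beta,\omega)}$ forces $H^{-1}(A)=E$ and then $A=E[1]$, contradicting properness; the remaining possibility $H^{-1}(A)=0$ makes $A$ a $0$-dimensional sheaf embedded in $E[1]$, which reflexivity of the locally free $E$ excludes. Hence $E[1]$ is $Z$-stable.

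For the converse I would first check that $\sigma_{(\beta,\omega)}$-stability with phase $0$ forces $E$ to be a torsion free sheaf: writing $E[1]\in\mca A$ and using the canonical sequence $0\to H^0(E)[1]\to E[1]\to H^1(E)\to 0$ in $\mca A$, stability together with $r_E\neq 0$ forces $H^1(E)=0$, so $E=H^0(E)\in\mca F_{(\beta,\omega)}$. As in $(1)$, membership in $\mca F_{(\beta,\omega)}$ with $\mu_{\omega}(E)=\beta\omega$ yields $\mu_{\omega}$-semistability. For $\mu_{\omega}$-stability, a proper saturated subsheaf $A'\subsetneq E$ of slope $\beta\omega$ would have $A',E/A'\in\mca F_{(\beta,\omega)}$, producing a short exact sequence $0\to A'[1]\to E[1]\to (E/A')[1]\to 0$ in $\mca A$ in which, by $(1)$, $A'[1]$ has phase $1$, contradicting $Z$-stability. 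Finally, if $E$ failed to be locally free, its double dual gives $0\to E\to E^{**}\to T\to 0$ with $T\neq 0$ of dimension $0$, and rotating yields a short exact sequence $0\to T\to E[1]\to E^{**}[1]\to 0$ in $\mca A$; since $Z(T)<0$ is real, $T$ is a proper subobject of phase $1$, again contradicting stability. Thus $E$ is $\mu_{\omega}$-stable and locally free.

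The delicate step, and the one I expect to be the main obstacle, is the analysis of the phase-$1$ subobjects of $E[1]$ inside the tilted heart $\mca A$: one must translate the abelian-category sub/quotient structure in $\mca A$ back into honest sheaf sequences via the cohomology long exact sequence, and then read off, from the torsion-freeness constraint on $H^{-1}$ of the quotient and from reflexivity of $E$, that no genuine destabilizing subobject can exist. The key conceptual point is that non-local-freeness of $E$ is detected precisely by the $0$-dimensional quotient $E^{**}/E$, which re-enters as a phase-$1$ subobject of $E[1]$.
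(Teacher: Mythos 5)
Your proof is correct, and it takes a genuinely more self-contained route than the paper in part $(2)$. Part $(1)$ is essentially identical to the paper's argument (phrased with $E[1]\in\mca A$ at the maximal phase rather than $E\in\mca A[-1]$ at phase $0$). For part $(2)$, however, the paper outsources both key steps to the literature: the forward direction is a one-line citation of Huybrechts' result (\cite[Theorem 0.2]{Huy}) that a $\mu_{\omega}$-stable locally free sheaf of slope $\beta\omega$ is a \emph{minimal} object of the tilted heart, and the converse begins by citing \cite[Lemma 10.1 (b)]{Bri2} to conclude that a $\sigma$-stable object of phase $0$ with nonzero rank is locally free, after which it rules out strict $\mu$-semistability by exhibiting a $\mu$-stable locally free subsheaf $A$ (locally free because it is saturated inside a locally free sheaf on a surface), which is $\sigma$-stable of phase $0$ by the forward direction and hence destabilizes $E$. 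You instead prove both ingredients directly: in the forward direction you analyze an arbitrary phase-$1$ subobject $A\subset E[1]$ through its cohomology sheaves, use the splitting $\mf{Im}Z(A)=\mf{Im}Z(H^0(A))-\mf{Im}Z(H^{-1}(A))$ into two terms of fixed sign to force $H^0(A)$ zero-dimensional and $H^{-1}(A)$ of slope $\beta\omega$, and then kill both cases via $\mu$-stability and normality of reflexive sheaves --- in effect reproving the relevant case of Huybrechts' theorem; in the converse you extract torsion-freeness from the canonical sequence $0\to H^0(E)[1]\to E[1]\to H^1(E)\to 0$ and local freeness from the observation that $T=E^{\vee\vee}/E$ would be a phase-$1$ subobject of $E[1]$ (the same mechanism the paper itself uses in Proposition \ref{B.2} $(1)$ and Lemma \ref{A.7}). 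The trade-off: the paper's proof is much shorter; yours is longer but self-contained modulo standard sheaf theory, and it makes explicit the structural point underlying everything, namely that every proper nonzero subobject and quotient of a phase-$1$ object automatically has phase $1$, so that $\sigma$-stability at the extremal phase is literally minimality in $\mca A$.
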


\begin{proof}
Let us prove the first assertion. 
Since $E$ is $\mu_{\omega}$-semistable, $E$ is in $\mca A[-1]$. 
Since $\mu_{\omega}(E) =\beta \omega$, the imaginary part $\mf{Im}Z(E)$ of $Z(E)$ is $0$. 
Thus the argument of $Z(E)$ is $0$. 

Assume that $E$ is not $\sigma_{(\beta, \omega)}$-semistable. 
Then there is a $\sigma_{(\beta, \omega)}$-semistable object $A \in \mca A[-1]$ such that 
\[
A \subset E \mbox{ in }\mca A[-1] \mbox{ with }\arg Z(A) > \arg Z(E)=0. 
\]
This contradicts the fact that $A$ is in $\mca A[-1]$. 
Hence $E$ is $\sigma_{(\beta, \omega)}$-semistable. 

Let us prove the second assertion. 
We assume that $E$ is a $\mu_{\omega}$-stable locally free sheaf. 
Then $E$ is minimal in $\mca A[-1]$\footnote{Namely $E$ has no non-trivial subobject in $\mca A[-1]$. } by \cite[Theorem 0.2]{Huy}. 
Thus $E$ is $\sigma_{(\beta, \omega)}$-stable with phase $0$.

Conversely we assume that $E$ is $\sigma_{(\beta,\omega)}$-stable with phase $0$. 
Since the rank of $E$ is not $0$, $E$ is a locally free sheaf by \cite[Lemma 10.1 (b)]{Bri2}. 
Since $E$ is in $\mca A[-1]$, we see $E \in \mca F_{(\beta, \omega)}$. 
Thus we have 
\[
\mu_{\omega}(E) \leq \mu_{\omega}^+(E) \leq \beta \omega.
\]
Thus equalities should hold. Hence $E$ is $\mu_{\omega}$-semistable. 

Suppose that $E$ is not $\mu_{\omega}$-stable. 
Then there is a $\mu_{\omega}$-stable subsheaf $A$ of $E$ such that $\mu_{\omega}(A) = \mu_{\omega }(E)$. 
If necessary by taking a saturation, we may assume that the quotient $E/A$ is a torsion free sheaf. 
We remark that $E/A$ is $\mu_{\omega}$-semistable. 
Then $A$ is locally free since $E$ is locally free and $\dim X =2$. 
Since $A$ is a $\mu_{\omega}$-stable locally free sheaf, 
$A$ is $\sigma_{(\beta, \omega)}$-stable with phase $0$. 
Thus the short exact sequence $A \to E \to E/A$ defines a distinguished triangle in $\mca A[-1]$. 
In particular $A$ is a subobject of $E$ in $\mca A[-1]$ with phase $0$. 
This contradicts the fact that $E$ is $\sigma_{(\beta,\omega)}$-stable. 
\end{proof}

Finally we treat the case ``$\mu_{\omega}(E) < \beta \omega$''. 

\begin{lem}\label{A.7}
Let $X$ be a projective K3 surface and $\sigma_{(\beta, \omega)} =(\mca A, Z)\in V(X)$. 
Assume that $F \to E \to A$ is a distinguished triangle in $\mca A[-1]$. 

$(1)$ If $E $ is a torsion free sheaf then $A$ is a torsion free sheaf. 

$(2)$ If $E$ is a $\mu_{\omega}$-stable locally free sheaf then $\mu_{\omega}(E) < \mu_{\omega}(A)$. 
\end{lem}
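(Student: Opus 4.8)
The plan is to reduce both statements to the long exact sequence of cohomology sheaves attached to the triangle, and then to a slope computation. Recall first that, since $\sigma_{(\beta,\omega)} \in V(X)$, every object $G \in \mca A[-1]$ has cohomology sheaves concentrated in degrees $0$ and $1$, with $H^0(G) \in \mca F_{(\beta,\omega)}$ (so torsion free with $\mu_\omega^+ \le \beta\omega$) and $H^1(G) \in \mca T_{(\beta,\omega)}$.

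For $(1)$, I would take the cohomology long exact sequence of $F \to E \to A \to F[1]$. Since $E$ is a sheaf, $H^i(E)=0$ for $i \ne 0$, while $F, A \in \mca A[-1]$ have vanishing cohomology outside degrees $0,1$. Comparing degrees $1$ and $2$ then squeezes $H^1(A)$ between $H^1(E)=0$ and $H^2(F)=0$, forcing $H^1(A)=0$; hence $A=H^0(A) \in \mca F_{(\beta,\omega)}$ is torsion free. The same long exact sequence produces the four-term exact sequence of sheaves
\[
0 \longrightarrow K \longrightarrow E \longrightarrow A \longrightarrow T \longrightarrow 0, \qquad K:=H^0(F)\in \mca F_{(\beta,\omega)}, \quad T:=H^1(F)\in \mca T_{(\beta,\omega)},
\]
which is the starting point for $(2)$. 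I set $I:=\Im(E\to A)$ and abbreviate $d(M):=c_1(M)\cdot \omega$.

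For $(2)$ we may assume $F\ne 0$ and $A\ne 0$, as otherwise there is nothing to prove. Since $E \in \mca F_{(\beta,\omega)}$ is $\mu_\omega$-semistable one has $\mu_\omega(E)=\mu_\omega^+(E)\le \beta\omega$, and by additivity of rank and of $d(\cdot)$ the desired inequality $\mu_\omega(A)>\mu_\omega(E)$ is equivalent to $d(A)>\mu_\omega(E)\,\rank A$. I split on whether $K$ vanishes. If $K\neq 0$, then $\rank I<\rank E$, and $I\neq 0$ (otherwise $A=T$ would be a non-zero torsion-free sheaf lying in both $\mca T_{(\beta,\omega)}$ and $\mca F_{(\beta,\omega)}$, which is impossible); so $I=E/K$ is a quotient of $E$ with $0<\rank I<\rank E$, and $\mu_\omega$-stability of $E$ gives $d(I)>\mu_\omega(E)\rank I$. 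Combining this with $d(T)\ge 0$ when $\rank T=0$, and with $d(T)>\beta\omega\,\rank T \ge \mu_\omega(E)\rank T$ when $\rank T>0$ (the latter from $\mu_\omega^-(T/\mathrm{tors})>\beta\omega$), summation over the two short exact sequences $0\to K\to E\to I\to 0$ and $0\to I\to A\to T\to 0$ yields $d(A)>\mu_\omega(E)\rank A$.

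The remaining case $K=0$, where $E\cong I\hookrightarrow A$, is the main obstacle: now $\mu_\omega(I)=\mu_\omega(E)$, so all the strictness must come from $T=A/E$, and this is exactly where local freeness of $E$ is indispensable. I would show that $T$ has no non-zero $0$-dimensional subsheaf. If $T_0\subseteq T$ is the maximal finite-length subsheaf and $A_0\subseteq A$ its preimage, then $0\to E\to A_0\to T_0\to 0$, and local freeness gives $\Ext^1(E,T_0)=H^1(E^\vee\otimes T_0)=0$ (as $E^\vee\otimes T_0$ has finite length), whence $\Ext^1(T_0,E)\cong \Ext^1(E,T_0)^{\ast}=0$ by Serre duality on the K3 surface; the sequence then splits and $A_0\cong E\oplus T_0$, so $T_0\neq 0$ would make $A$ non-torsion-free, contradicting $(1)$. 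Thus $T_0=0$. As the triangle is non-trivial, $T=H^1(F)\neq 0$, so either $\rank T=0$ and $T$ is pure of dimension one with $d(T)=c_1(T)\cdot\omega>0$, or $\rank T>0$ and $d(T)>\mu_\omega(E)\rank T$ as before. In both subcases $d(T)>\mu_\omega(E)\rank T$, and since $d(A)=d(E)+d(T)$ and $\rank A=\rank E+\rank T$ this gives $d(A)>\mu_\omega(E)\rank A$, i.e. $\mu_\omega(A)>\mu_\omega(E)$, completing the proof.
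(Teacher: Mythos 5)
Your proof is correct and follows essentially the same route as the paper's: the same case division on whether $H^0(F)$ vanishes, the same slope estimates using $H^0(F)\in\mca F_{(\beta,\omega)}$ and $H^1(F)\in\mca T_{(\beta,\omega)}$, and the same use of local freeness of $E$ together with Serre duality to rule out a zero-dimensional contribution in the quotient. Your packaging of that last step (splitting off the maximal finite-length subsheaf $T_0$ via $\Ext^1(T_0,E)=0$) is just a mild variant of the paper's argument, which instead lifts a skyscraper section $\mca O_x \to H^1(F)$ to $A$ using $\Hom^1_X(\mca O_x,E)=0$; both contradict the torsion-freeness of $A$ established in part $(1)$.
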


We remark that the proof of \cite[Lemma 4.4]{Kaw} completely works. 

\begin{proof}
One can easily prove the first assertion by taking cohomologies to the triangle $F \to E \to A$. 
Thus let us prove the second assertion. 
Since $F$, $E$ and $A$ are in $\mca A[-1]$, 
we have an exact sequence of sheaves
\[
\begin{CD}
0@>>> H^0(F) @ >>> E @>f>> A @>>> H^1(F) @>>> 0,
\end{CD}
\]
where $H^i(F)$ is the $i$-th cohomology of $F$. 

Assume that $H^0(F) \neq 0$. 
Since $H^0(F)$ is torsion free, $\rank \Im f< \rank E$, where $\Im f$ is the image of the morphism $f: E \to A$. 
Thus $\mu_{\omega}(E) < \mu_{\omega}(\Im f )$. 
By using the fact $H^1(F) \in \mca T_{(\beta, \omega)}$, one can prove $\mu_{\omega}(\Im f) \leq \mu_{\omega}(A)$. Thus we have $\mu_{\omega}(E)< \mu_{\omega}(A)$.

Assume that $H^0(F)=0$. We write $F$ instead of $H^1(F)$. 
Then $E$ is a subsheaf of $A$. 
If $\rank F$ is not $0$ then we have $\mu_{\omega}(A) \leq \beta \omega < \mu_{\omega}(F)$. 
Thus we have $\mu_{\omega }(E) < \mu_{\omega }(A)$. 
Suppose that $\rank F=0$. If the dimension of the support of $F$ is $1$ then $c_1(F) \omega >0$. 
Hence we see $\mu_{\omega}(E) < \mu_{\omega }(A)$. 
Thus suppose that $F$ is a torsion sheaf with $\dim \mr{Supp}(F)=0$. 
Take a closed point $x \in \mr{Supp}(F)$. 
By taking the right derived functor $\bb R \Hom_X(\mca O_x, -)$ to the triangle $E \to A \to F$, we have the following exact sequence of $\bb C$-vector spaces:
\[
\Hom_X^0(\mca O_x, E) \to \Hom_X^0(\mca O_x, A) \to \Hom_X^0(\mca O _x , F) \to \Hom_X^1(\mca O_x, E) 
\]
Since $E$ is locally free we see $\Hom_X^0(\mca O_x, E) = \Hom_X^1(\mca O_x , E)=0$ by the Serre duality. 
Since $x$ is in the support of $F $, $\Hom_X^0(\mca O_x, F) $ should not be $0$. 
This contradicts the torsion freeness of $A$. 
Hence we have proved the assertion. 
\end{proof}

\begin{lem}\label{A.8}
Let $X$ be a projective K3 surface, let $L_0$ be an ample line bundle,  let $\sigma_{(\beta, \omega )} =(\mca A, Z) \in V(X)_{L_0}$ and let $F \to E \to A$ be a distinguished triangle in $\mca A[-1]$. 
We put $v(E) = r_E \+ \delta _E \+ s_E$. 
Assume that $\hom_X^0(A,A)=1$, both $\rank E$ and $\rank A$ are positive and $\delta _E = n_E L_0$ for some integer $n_E$.

$(1)$ If $v(E)^2=-2$, $\mu_{\omega }(E) < \mu_{\omega }(A) < \beta \omega $ and $r_E \beta L_0 -\delta _E L_0 \leq \omega ^2/2$, 
then $\arg Z(E)< \arg Z(A)$. 

$(2)$ If $v(E)^2 \geq 0$, $\mu_{\omega }(E) < \mu_{\omega }(A) < \beta \omega $ and 
\[
(r_E \beta L_0 -\delta _E L_0) \Bigl(\frac{v(E)^2}{2r_E} +1\Bigr) \leq \frac{\omega ^2}{2},
\]
then $\arg Z(E)< \arg Z(A)$. 
\end{lem}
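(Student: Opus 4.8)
The plan is to mirror the proof of Lemma~\ref{A.2}, the essential difference being that all objects now lie in $\mca A[-1]$ rather than $\mca A$; consequently the relevant values of $Z$ sit in the lower half plane and the direction of every argument comparison is reversed. By Lemma~\ref{A.7}~$(1)$ the object $A$ is a torsion free sheaf. Writing $\beta = xL_0$, $\omega = yL_0$ with $y>0$ and $L_0^2 = 2d$, and decomposing $\delta_A = n_A L_0 + \nu_A$ with $\nu_A L_0 = 0$, I set $\lambda_A = n_A - r_A x$ and $\lambda_E = n_E - r_E x$, so that $\mf{Im}Z(A) = 2yd\lambda_A$ and $\mf{Im}Z(E) = 2yd\lambda_E$.

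First I would collect the sign data. The hypotheses $\mu_\omega(E) < \mu_\omega(A) < \beta\omega$ give $\lambda_E, \lambda_A < 0$ and $n_E/r_E < n_A/r_A$; moreover $F \in \mca A[-1]$ forces $\mf{Im}Z(F) \le 0$, so the short exact sequence $0 \to F \to E \to A \to 0$ in $\mca A[-1]$ yields $\lambda_E \le \lambda_A$. Since $\delta_E = n_E L_0$ we have $\nu_E = 0$, hence $Z(E) = Z^{L_0}(E)$, while $\nu_A^2 \le 0$ gives $\mf{Re}Z^{L_0}(A) \le \mf{Re}Z(A)$. As $Z(A)$ lies in the lower half plane, lowering its real part lowers its phase, so $\arg Z^{L_0}(A) \le \arg Z(A)$; it is therefore enough to prove $\arg Z^{L_0}(E) < \arg Z^{L_0}(A)$. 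Since both these values have strictly negative imaginary part, the determinant criterion for vectors in the lower half plane shows that this inequality is equivalent to $N_{A,E}(x,y) < 0$, precisely the opposite of the sign required in Lemma~\ref{A.2}.

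Then I would reuse verbatim the expansion of $N_{A,E}$ obtained in the proof of Lemma~\ref{A.2}:
\[
N_{A,E}(x,y) = dy^2(r_A n_E - r_E n_A) + \lambda_E\frac{v(A)^2}{2r_A} - \lambda_A\frac{v(E)^2}{2r_E} + d\lambda_A\lambda_E\Bigl(\frac{n_E}{r_E} - \frac{n_A}{r_A}\Bigr).
\]
Here $r_A n_E - r_E n_A < 0$, and since $\lambda_A\lambda_E > 0$ while $n_E/r_E - n_A/r_A < 0$ the last term is strictly negative; bounding $v(A)^2 \ge -2$ (from $\hom^0_X(A,A)=1$) to control $\lambda_E v(A)^2/(2r_A)$ gives $N_{A,E}(x,y) < N''_{A,E}(x,y)$ with $N''_{A,E}(x,y) := dy^2(r_A n_E - r_E n_A) - \lambda_E/r_A - \lambda_A v(E)^2/(2r_E)$. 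It then remains to check $N''_{A,E}(x,y) \le 0$; in both cases one first uses $r_A \ge 1$ to bound $-\lambda_E/r_A$ above by $-\lambda_E$. The arithmetic engine is the integrality estimate $r_E n_A - r_A n_E \ge 1/L_0^2$, valid because $n_E \in \bb Z$ and $n_A = (\delta_A L_0)/L_0^2$ with $\delta_A L_0 \in \bb Z$, which gives $dy^2(r_E n_A - r_A n_E) \ge y^2/2$. In case $(1)$, $v(E)^2 = -2$ turns the last term into $\lambda_A/r_E < 0$ and reduces matters to $dy^2(r_E n_A - r_A n_E) \ge |\lambda_E|$; since $r_E\beta L_0 - \delta_E L_0 = 2d|\lambda_E|$ and $\omega^2/2 = dy^2$, this is exactly the hypothesis. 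In case $(2)$, using $\lambda_E \le \lambda_A$ and $v(E)^2 \ge 0$ to replace $-\lambda_A v(E)^2/(2r_E)$ by $-\lambda_E v(E)^2/(2r_E)$ reduces matters to $dy^2(r_E n_A - r_A n_E) \ge |\lambda_E|\bigl(v(E)^2/(2r_E)+1\bigr)$, which likewise follows from the integrality estimate together with the hypothesis $(r_E\beta L_0 - \delta_E L_0)(v(E)^2/(2r_E)+1) \le \omega^2/2$.

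I expect the main obstacle to be purely a matter of sign bookkeeping: once each of $\lambda_A, \lambda_E$ has become negative and the target has flipped to $N_{A,E}(x,y) < 0$, one must keep every inequality in the chain pointing the correct way, in particular when clearing denominators against the negative quantities $\lambda_E$ and $r_A n_E - r_E n_A$. The sole genuinely arithmetic input, and the place where the standing assumption $\delta_E = n_E L_0$ with $n_E \in \bb Z$ is indispensable, is the lower bound $r_E n_A - r_A n_E \ge 1/L_0^2$; everything else is a sign-careful transcription of Lemma~\ref{A.2}.
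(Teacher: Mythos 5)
Your proof is correct and takes essentially the same approach as the paper's: the paper likewise reduces the claim to $N_{A,E}(x,y)<0$ via $\arg Z^{L_0}(A)\leq \arg Z(A)$, reuses the expansion of $N_{A,E}$ from Lemma \ref{A.2}, drops the same two terms using $v(A)^2\geq -2$ and the sign data, and closes both cases with the same integrality estimate $r_E n_A - r_A n_E \geq 1/L_0^2$ combined with the stated hypothesis. (Your appeal to Lemma \ref{A.7}(1) is superfluous --- torsion-freeness of $A$ is never used afterwards, and the paper's own proof does not invoke it --- but this is harmless.)
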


\begin{proof}
The proof is essentially the same as it of Lemma \ref{A.2}. 
We put $L_0^2 =2d$ and $v(A) = r_A \+ \delta _A \+ s_A$ with $\delta _A = n_A L_0 + \nu _A$, where $\nu_A \in \mr{NS}(X)_{\bb R}$ with $\nu _A L_0=0$. 
If we put $m_A = \delta _A L_0 \in \bb Z$ then we have $n_A = m_A /2 d$. 

Now we have 
\begin{eqnarray*}
Z(A)	&=&	\frac{v(A)^2}{2r_A} + \frac{r_A }{2} \Bigl(\omega + \sqrt{-1}(\frac{n_A L_0 }{r_A} - \beta)  \Bigr)^2  -\frac{\nu _A^2}{2r_A} \\
		&=& Z^{L_0}(A) - \frac{\nu _A^2}{2r_A}
\end{eqnarray*}
Since $\nu _A^2 \leq 0$ we have $\arg Z^{L_0}(A) \leq \arg Z(A)$. 
Thus it is enough to show that $\arg Z(E) < \arg Z^{L_0}(A)$. 
We put $\lambda _E = n_E - r_E x$ and $\lambda _A = n_A - r_A x$. 
We remark that both $\lambda _E$ and $\lambda _A$ are negative and $\lambda_{E} \leq \lambda _A <0$ by the fact $F \in \mca A[-1]$. 
Hence we see
\[
\arg Z(E) < \arg Z^{L_0}(A) \iff N_{A,E}(x,y) < 0.
\]

Now we have 
\[
N_{A,E}(x,y) = dy^2 (r_A n_E - r_E n_A ) + d \lambda _A \lambda _E (\frac{n_E }{r_E}-\frac{n_A }{r_A}) + \frac{v(A)^2}{2r_A}\lambda _E - \frac{v(E)^2}{2r_E}\lambda _A.
\]
Since $\mu_{\omega }(E) < \mu_{\omega }(A)$ we see $r_A n_E - r_E n_A <0$. Thus we have 
\[
N_{A,E}(x,y) < N_{A,E}'(x,y) := dy^2 (r_A n_E - r_E n_A ) +  \frac{v(A)^2}{2r_A}\lambda _E - \frac{v(E)^2}{2r_E}\lambda _A. 
\]
Since $\hom_X^0(A,A)=1$ we have $v(A)^2\geq -2$. 
Thus we see 
\[
N_{A,E}'(x,y) \leq  N_{A,E}''(x,y) := dy^2 (r_A n_E - r_E n_A ) - \frac{\lambda _E}{r_A} - \frac{v(E)^2}{2r_E}\lambda _A.
\]
Hence it is enough to show $N_{A,E}''(x,y)\leq  0$. 

Assume that $v(E)^2=-2$, then  
\begin{eqnarray*}
N_{A,E}''(x,y)	&=&	dy^2 (r_A n_E - r_E n_A ) - \frac{\lambda _E}{r_A} + \frac{\lambda _A}{r_E} \\
				&\leq &dy^2 (r_A n_E - r_E n_A ) - \frac{\lambda _E}{r_A} 
\end{eqnarray*}
Hence it is enough to show that 
\begin{equation}
dy^2 (r_A n_E - r_E n_A ) - \frac{\lambda _E}{r_A} \leq 0.  \label{temp}
\end{equation}

Recall that $n_A = m_A/2d$ for some integer $m_A$ and $d \in \bb Z$. 
Then the inequality (\ref{temp}) follows from the assumption  
\begin{equation}
r_E \beta L_0 -\delta _E L_0 \leq \frac{\omega ^2}{2},\notag
\end{equation}
Thus we have finished the proof. 

Assume that $v(E)^2 \geq 0$. 
Then we have 
\begin{eqnarray*}
N_{A,E}''(x,y)	&=&	dy^2 (r_A n_E - r_E n_A ) - \frac{\lambda _E}{r_A} - \frac{v(E)^2}{2r_E}\lambda _A \\
				&\leq & dy^2 (r_A n_E - r_E n_A ) - \lambda _E - \frac{v(E)^2}{2r_E}\lambda _E .
\end{eqnarray*}
Hence it is enough to show that 
\begin{equation}
dy^2 (r_A n_E - r_E n_A ) - \lambda _E - \frac{v(E)^2}{2r_E}\lambda _E \leq 0. \label{kome2}
\end{equation}
The inequality (\ref{kome2}) is equivalent to the following inequality 
\begin{equation}
\frac{-\lambda _E}{r_A (r_E n_A - r_A n_E)}\Bigl( \frac{v(E)^2}{2r_E} +1  \Bigr ) \leq  dy^2. \label{kimete3} 
\end{equation}
The last inequality (\ref{kimete3}) follows from the assumption
\[
(r_E \beta L_0 -\delta _E L_0) \Bigl(\frac{v(E)^2}{2r_E} +1\Bigr) \leq \frac{\omega ^2}{2}. 
\]
Thus we have proved the assertion. 
\end{proof}

Similarly to the case of Corollary \ref{A.5}, we have the following corollary. 
We omit the proof since the proof is as the same as the proof of Lemma \ref{A.8}. 

\begin{cor}\label{A.9}
Notations and assumptions are being as Lemma \ref{A.8}. 
Furthermore we assume that $\mr{NS}(X) = \bb Z L_0$. 

$(1)$ Assume that $v(E)^2 =-2$, $\mu_{\omega }(E) < \mu_{\omega }(A) < \beta \omega$ and $\frac{1}{L_0^2}(r_E \beta L_0  - \delta _E L_0) \leq \omega ^2 /2$.  Then we have $\arg Z(E) < \arg Z(A)$. 

$(2)$ Assume that $v(E)^2 \geq 0$, $\mu_{\omega }(E) < \mu_{\omega }(A) < \beta \omega$ and 
\[
\frac{1}{L_0^2}(r_E \beta L_0  - \delta _E L_0) \Bigl(\frac{v(E)^2 }{2 r_E}+1  \Bigr) \leq \omega ^2 /2.
\]  
Then we have $\arg Z(E) < \arg Z(A)$. 
\end{cor}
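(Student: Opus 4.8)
The plan is to rerun the proof of Lemma \ref{A.8} essentially line by line, with a single substantive change that is responsible for the sharper hypothesis. First I would adopt all the notation of that proof: put $L_0^2 = 2d$, write $v(A) = r_A \+ \delta _A \+ s_A$ with $\delta _A = n_A L_0 + \nu _A$ and $\nu _A L_0 = 0$, and set $\lambda _E = n_E - r_E x$, $\lambda _A = n_A - r_A x$. The hypotheses $F \in \mca A[-1]$ and $\mu_{\omega}(E) < \mu_{\omega}(A)$ again give $\lambda _E \leq \lambda _A < 0$ and $r_A n_E - r_E n_A < 0$, and the inequality $\nu _A^2 \leq 0$ reduces the claim $\arg Z(E) < \arg Z(A)$ to $N_{A,E}(x,y) < 0$. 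Exactly as before, the estimates using $\hom_X^0(A,A) = 1$ (hence $v(A)^2 \geq -2$) together with $\lambda _E \leq \lambda _A < 0$ reduce this to proving $N''_{A,E}(x,y) \leq 0$.

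The one real difference enters at the divisibility step. Under the additional assumption $\mr{NS}(X) = \bb Z L_0$ the orthogonal part $\nu _A$ vanishes and $n_A$ is an integer, so that $r_E n_A - r_A n_E$ is a \emph{positive integer} and hence is bounded below by $1$, rather than merely by $1/2d$ as in Lemma \ref{A.8}. This is precisely the gain that permits the hypothesis to be weakened by the factor $L_0^2 = 2d$: indeed one computes $\tfrac{1}{L_0^2}(r_E \beta L_0 - \delta _E L_0) = -\lambda _E$, whereas the quantity controlled in Lemma \ref{A.8} was $r_E \beta L_0 - \delta _E L_0 = -2d\lambda _E$.

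For the first assertion I would reduce, verbatim as in Lemma \ref{A.8}, to the inequality (\ref{temp}), which after rearrangement reads
\[
\frac{-\lambda _E}{r_A(r_E n_A - r_A n_E)} \leq dy^2.
\]
Using $r_E n_A - r_A n_E \geq 1$ and $r_A \geq 1$, the left-hand side is at most $-\lambda _E$, and the stated bound $\tfrac{1}{L_0^2}(r_E \beta L_0 - \delta _E L_0) \leq \omega^2/2$ reads exactly $-\lambda _E \leq dy^2$, which closes the case. The second assertion is identical: I would reduce to the inequality (\ref{kimete3}),
\[
\frac{-\lambda _E}{r_A(r_E n_A - r_A n_E)}\Bigl(\frac{v(E)^2}{2r_E}+1\Bigr) \leq dy^2,
\]
and then the integrality $r_E n_A - r_A n_E \geq 1$ together with the hypothesis $\tfrac{1}{L_0^2}(r_E \beta L_0 - \delta _E L_0)\bigl(\tfrac{v(E)^2}{2r_E}+1\bigr) \leq \omega^2/2$ finishes it.

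Since every manipulation is literally that of Lemma \ref{A.8} apart from replacing the lower bound $1/2d$ on $r_E n_A - r_A n_E$ by $1$, there is no genuine obstacle. The only point demanding care is the bookkeeping of the factor $L_0^2 = 2d$, so that the rescaled hypothesis (obtained by dividing through by $L_0^2$) is correctly matched against the sharpened divisibility estimate; getting this factor right is exactly what distinguishes the present statement from Lemma \ref{A.8}, and for this reason one may safely omit the otherwise identical computation.
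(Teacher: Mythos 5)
Your proposal is correct and follows exactly the route the paper intends: rerun the proof of Lemma \ref{A.8} verbatim, with the single change that $\mr{NS}(X)=\bb Z L_0$ forces $\nu_A=0$ and $n_A\in\bb Z$, so the positive quantity $r_E n_A - r_A n_E$ is bounded below by $1$ rather than $1/L_0^2$, which is precisely what matches the hypothesis rescaled by $1/L_0^2$ (this is the same integrality step the paper spells out in the parallel Corollary \ref{A.3}). Your bookkeeping $\tfrac{1}{L_0^2}(r_E\beta L_0-\delta_E L_0)=-\lambda_E$ and the reductions to (\ref{temp}) and (\ref{kimete3}) are all accurate.
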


\begin{thm}\label{A.10}
Let $X $ be a projective K3 surface, $L_0$ an ample line bundle and $\sigma_{(\beta, \omega )}=(\mca A, Z) \in V(X)_{L_0}$. 
Assume that $E$ is a $\mu_{L_0}$-stable locally free sheaf. 
We put $v(E) = r_E \+ \delta _E \+ s_E$. 
Assume that $\delta_E = n_E L_0 $ and $\mu_{\omega }(E) < \beta \omega $ where $n_E \in \bb Z$.  

$(1)$ Assume that $v(E)^2=-2$. If $(r_E \beta L_0 - \delta _E L_0 ) < \omega ^2 /2$ then $E$ is $\sigma _{(\beta, \omega)}$-stable. 

$(2)$ Assume that $v(E)^2 \geq 0$. If $(r_E \beta L_0 - \delta _E L_0 )\Bigl(\frac{v(E)^2}{2r_E}+1 \Bigr) < \omega ^2 /2$ then $E$ is $\sigma _{(\beta, \omega)}$-stable. 
\end{thm}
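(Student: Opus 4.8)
The plan is to argue by contradiction, in exact parallel with the proof of Theorem \ref{A.4} but carried out inside the subcategory $\mca A[-1]$ and using the two lemmas tailored to the regime $\mu_{\omega}(E) < \beta\omega$. First I would record the ambient facts. Since $\omega = yL_0$ with $y>0$, the hypothesis that $E$ is $\mu_{L_0}$-stable locally free is the same as $\mu_{\omega}$-stable locally free, and together with $\mu_{\omega}(E) < \beta\omega$ this places $E$ in $\mca A[-1]$ by the characterization of $\mca A[-1]$ given just before Lemma \ref{A.1}. Moreover $\lambda_E = n_E - r_E x$ is strictly negative, so $\mf{Im}Z(E) = 2yd\lambda_E < 0$ and hence $\arg Z(E)$ lies strictly in $(-1,0)$; this strict negativity is exactly where the assumption $\mu_{\omega}(E) < \beta\omega$ (rather than $=$) gets used.

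Next, suppose $E$ were not $\sigma_{(\beta,\omega)}$-stable. Then $E[1] \in \mca A$ fails to be $Z$-stable, so in $\mca A[-1]$ there is a nonzero proper quotient of $E$, and choosing a Jordan--H\"older factor of minimal phase I obtain a short exact sequence $0 \to F \to E \to A \to 0$ in $\mca A[-1]$ with $A$ being $\sigma_{(\beta,\omega)}$-stable and $\arg Z(A) \le \arg Z(E)$. Stability forces $A$ to be simple, so $\hom_X^0(A,A)=1$. Reading this sequence as a distinguished triangle $F \to E \to A$ in $\mca A[-1]$, Lemma \ref{A.7}(1) shows $A$ is a torsion free sheaf, whence $\rank A > 0$ (and $\rank E > 0$ is clear), and Lemma \ref{A.7}(2) gives the strict slope inequality $\mu_{\omega}(E) < \mu_{\omega}(A)$.

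I would then upgrade the phase inequality into the full slope chain needed downstream. From $\arg Z(A) \le \arg Z(E) < 0$ it follows that $\mf{Im}Z(A) = 2yd\lambda_A < 0$, i.e. $\lambda_A < 0$, which is equivalent to $\mu_{\omega}(A) < \beta\omega$; combined with the previous step this yields $\mu_{\omega}(E) < \mu_{\omega}(A) < \beta\omega$, precisely the slope hypothesis of Lemma \ref{A.8}. The numerical bound assumed in the theorem is strict and therefore implies the non-strict bound required by Lemma \ref{A.8}. Applying part (1) when $v(E)^2 = -2$ and part (2) when $v(E)^2 \ge 0$ (the needed inequality $v(A)^2 \ge -2$ being supplied inside that lemma by $\hom_X^0(A,A)=1$) produces $\arg Z(E) < \arg Z(A)$, contradicting $\arg Z(A) \le \arg Z(E)$. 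Hence no destabilizing quotient exists and $E$ is $\sigma_{(\beta,\omega)}$-stable.

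The genuinely computational work — the expansion of $N_{A,E}(x,y)$ and the reduction of each case to the stated bound on $r_E\beta L_0 - \delta_E L_0$ — is already discharged inside Lemma \ref{A.8}, so the content of this proof is organizational. I expect the only delicate points to be: extracting a quotient $A$ that is simultaneously $\sigma_{(\beta,\omega)}$-stable and an honest torsion free sheaf, and verifying that the three conditions $\mu_{\omega}(E) < \mu_{\omega}(A)$, $\mu_{\omega}(A) < \beta\omega$, and $\hom_X^0(A,A)=1$ hold together. The first two are handled by Lemma \ref{A.7}(2) and by the strict negativity of $\arg Z(E)$ respectively, and the third is automatic from $\sigma$-stability of $A$, so I anticipate no substantive obstacle beyond assembling these pieces in the correct order.
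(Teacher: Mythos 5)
Your proposal is correct and follows essentially the same route as the paper's proof: contradiction via a $\sigma_{(\beta,\omega)}$-stable quotient $A$ of $E$ in $\mca A[-1]$ with $\arg Z(A)\le \arg Z(E)$, Lemma \ref{A.7} to get torsion-freeness and $\mu_{\omega}(E)<\mu_{\omega}(A)$, exclusion of the case $\mu_{\omega}(A)=\beta\omega$ (the paper argues via phase $0$ versus $\arg Z(A)<0$, you argue via strict negativity of $\mf{Im}Z(A)$ — the same observation), and finally Lemma \ref{A.8} to force $\arg Z(E)<\arg Z(A)$, a contradiction. The additional details you supply (simplicity of $A$ giving $\hom_X^0(A,A)=1$, positivity of $\rank A$, strict bound implying the non-strict bound of Lemma \ref{A.8}) are exactly the implicit checks the paper relies on.
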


\begin{proof}
Since $\mu_{\omega }(E) < \beta \omega$, $E$ is in $\mca A[-1]$ and $\arg Z(E) < 0$. 
Suppose to the contrary that $E$ is not $\sigma_{(\beta, \omega)}$-stable. 
Then there is a $\sigma_{(\beta, \omega)}$-stable object $A$ such that $A$ is a quotient of $E$ in $\mca A[-1]$ with $\arg Z(A ) \leq \arg Z(E) $. 
Thus we have a distinguished triangle in $\mca A[-1]$:
\[
\begin{CD}
F @>>> E @>>> A @>>> F[1].
\end{CD}
\]
By Lemma \ref{A.7}, $A$ is a torsion free sheaf with $\mu_{\omega }(E) < \mu_{\omega }(A)$. 
Since $A$ is in $\mca A[-1]$, we see $\mu_{\omega }(A)  \leq \beta \omega$. 
If $\mu_{\omega }(A) = \beta \omega $, then the imaginary part of $Z(A)$ is $0$. 
Thus $A$ is $\sigma_{(\beta,\omega )}$-stable with phase $0$. 
This contradicts $\arg Z(A) < \arg Z(E) < 0$. 
Hence $\mu_{\omega }(A) < \beta \omega $. 
Then we see $\arg Z(E) < \arg Z(A) $ by Lemma \ref{A.8} whether $v(E)^2 =-2$ or $v(E) ^2 \geq 0$. 
This is a contradiction. 
Thus $E$ is $\sigma_{(\beta, \omega )}$-stable. 
\end{proof}

\begin{cor}\label{A.11}
Notations and assumptions are being as Theorem \ref{A.10}. 
Furthermore we assume that $\mr{NS}(X) = \bb Z L_0$. 

$(1)$ Assume that $v(E)^2=-2$. If $\frac{1}{L_0^2}(r_E \beta L_0 - \delta _E L_0 ) < \omega ^2 /2$ then $E$ is $\sigma _{(\beta, \omega)}$-stable. 

$(2)$ Assume that $v(E)^2 \geq 0$. If $\frac{1}{L_0^2}(r_E \beta L_0 - \delta _E L_0 )\Bigl(\frac{v(E)^2}{2r_E}+1 \Bigr) < \omega ^2 /2$ then $E$ is $\sigma _{(\beta, \omega)}$-stable. 
\end{cor}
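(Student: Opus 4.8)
The plan is to mimic the proof of Theorem \ref{A.10} almost verbatim, replacing the single appeal to Lemma \ref{A.8} by the corresponding appeal to Corollary \ref{A.9}. The extra hypothesis $\mr{NS}(X) = \bb Z L_0$ serves exactly to license that replacement: it forces every $\delta_A$ to be an integer multiple of $L_0$, which is precisely what sharpens the numerical threshold from the one in Lemma \ref{A.8} to the one in Corollary \ref{A.9}. Since I am free to assume Corollary \ref{A.9}, I need only thread the hypotheses through the same contradiction argument.

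Concretely, since $\mu_{\omega}(E) < \beta \omega$ the sheaf $E$ lies in $\mca A[-1]$ and $\arg Z(E) < 0$. I would argue by contradiction: if $E$ is not $\sigma_{(\beta, \omega)}$-stable, then there is a $\sigma_{(\beta, \omega)}$-stable quotient $A$ of $E$ in $\mca A[-1]$ with $\arg Z(A) \leq \arg Z(E)$, yielding a distinguished triangle $F \to E \to A \to F[1]$ in $\mca A[-1]$. Because $E$ is a $\mu_{L_0}$-stable (hence $\mu_{\omega}$-stable, as $\omega = yL_0$ is proportional to $L_0$) locally free sheaf, Lemma \ref{A.7} applies and shows that $A$ is a torsion free sheaf with $\mu_{\omega}(E) < \mu_{\omega}(A)$; in particular $\rank A > 0$. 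Since $A \in \mca A[-1]$ we have $\mu_{\omega}(A) \leq \beta \omega$, and the boundary case $\mu_{\omega}(A) = \beta \omega$ is excluded just as in Theorem \ref{A.10}: it would force $\mf{Im}Z(A) = 0$, hence $\arg Z(A) = 0$, contradicting $\arg Z(A) \leq \arg Z(E) < 0$. Therefore $\mu_{\omega}(E) < \mu_{\omega}(A) < \beta \omega$.

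At this stage all the hypotheses of Corollary \ref{A.9} are in force: $A$ is $\sigma_{(\beta, \omega)}$-stable and therefore simple, so $\hom_X^0(A,A) = 1$; both $\rank E$ and $\rank A$ are positive; $\delta_E = n_E L_0$; and $\mr{NS}(X) = \bb Z L_0$. The strict numerical hypothesis of the present statement implies the non-strict hypothesis of Corollary \ref{A.9} in each of the two cases $v(E)^2 = -2$ and $v(E)^2 \geq 0$, so I may invoke that corollary to obtain $\arg Z(E) < \arg Z(A)$. This contradicts $\arg Z(A) \leq \arg Z(E)$, and hence $E$ is $\sigma_{(\beta, \omega)}$-stable.

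I do not anticipate a genuine obstacle: the argument is the word-for-word analogue of the proof of Theorem \ref{A.10}, and the paper already signals (in the remarks preceding Corollaries \ref{A.5} and \ref{A.9}) that these $\mr{NS}(X) = \bb Z L_0$ refinements are obtained by exactly this substitution of Corollary \ref{A.9} for Lemma \ref{A.8}. The only points deserving a moment's care are the routine bookkeeping that the strict inequality in the hypothesis feeds correctly into the non-strict inequality required by Corollary \ref{A.9}, and that the side conditions $\hom_X^0(A,A) = 1$ and $\rank A > 0$ genuinely transfer to the stable quotient $A$; both are immediate.
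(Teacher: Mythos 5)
Your proposal is correct and is exactly the argument the paper intends: it repeats the contradiction scheme of Theorem \ref{A.10} (quotient $A$ in $\mca A[-1]$, Lemma \ref{A.7}, exclusion of the phase-$0$ boundary case) with Corollary \ref{A.9} substituted for Lemma \ref{A.8}, which is precisely the substitution the paper describes when it omits the proof. The bookkeeping you flag ($\hom_X^0(A,A)=1$ from stability, positivity of ranks, strict implying non-strict inequality) is handled correctly.
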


The proof is essentially as the same as it of Theorem \ref{A.10}. 
One can easily Corollary \ref{A.11} by using Corollary \ref{A.9} instead of \ref{A.8}. 
Hence we omit the proof.

\section{First application}\label{B}
The goal of this section is to prove Theorem \ref{B.4} as an application of Corollaries \ref{A.5} and \ref{A.11}. 
We shall give a classification of fine moduli spaces of Gieseker stable torsion free sheaves on a projective K3 surface with Picard number one. 
In this section the pair $(X, L) $ is called a \textit{generic K3} if $X$ is a projective K3 surface and $\mr{NS}(X)$ is generated by an ample line bundle.

We shall start this section with an easy observation. 
Suppose that $E$ is a Gieseker stable torsion free sheaf on a generic K3 $(X, L)$. 
Since $E$ is Gieseker stable we have $v(E) ^2 \geq -2$. 
Assume that $v(E) ^2 =-2$. Then $\hom_X^1 (E,E) =0$. 
Thus $E$ is a spherical sheaf. 
It is known that $E$ is $\mu$-stable locally free sheaf (For instance see \cite[Proposition 5.2]{Kaw}). 
Thus the notion of $\mu$-stability is equivalent to the notion of Gieseker stability if $v(E)^2=-2$. 

Next we consider the case $v(E) ^2 \geq 0$. 
We write down the following proposition which plays a key roll in this section. 

\begin{prop}\label{B.1}

Let $X$ be a projective K3 surface and $L$ an ample line bundle. 
Assume that $E$ is a Gieseker stable torsion free sheaf with respect to $L$ with $v(E)^2 =0$. 

$(1)$ Assume that $\rank E > 1$. If $E$ is $\mu$-stable with respect to $L$ then $E $ is locally free. 

$(2)$ Assume that $\mr{NS}(X) = \bb Z L$. 
If $E$ is locally free then $E$ is $\mu$-stable with respect to $L$. 

In particular if $\mr{NS}(X)= \bb Z L$ and $\rank E > 1$ then the following holds:
If $E$ is not $\mu$-stable locally free then $E$ is neither $\mu$-stable nor locally free. 
\end{prop}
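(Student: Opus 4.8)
The plan is to prove the two assertions separately, then note that the final "In particular" statement is a direct logical consequence. I will work throughout with a Gieseker stable torsion free sheaf $E$ on a projective K3 surface with $v(E)^2 = 0$, recalling that $v(E)^2 = 0$ forces $\hom_X^1(E,E) = 2$ via the relation $\chi(E,E) = -\langle v(E), v(E) \rangle = 0$ together with $\hom_X^0(E,E) = 1$ (stability) and Serre duality $\hom_X^2(E,E) = \hom_X^0(E,E) = 1$. This numerical constraint is the engine that keeps the moduli situation rigid.

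For assertion $(1)$, where $\rank E > 1$ and $E$ is assumed $\mu$-stable, the goal is local freeness. I would examine the double dual $E^{**}$, which is a reflexive, hence locally free, sheaf on the smooth surface $X$, with an inclusion $E \hookrightarrow E^{**}$ whose cokernel $Q$ is supported in dimension zero. The key point is that $E^{**}$ is again $\mu$-stable with the same slope as $E$ (the first Chern class is unchanged by reflexive hull in codimension one), and its Mukai vector satisfies $v(E^{**})^2 \leq v(E)^2 = 0$, with the defect measured by the length of $Q$. I expect to use the fact that for a $\mu$-stable locally free sheaf one has $v(E^{**})^2 \geq -2$, and more sharply that $v(E^{**})^2 \geq 0$ when $\rank > 1$ is incompatible with a strictly positive length of $Q$ unless $Q = 0$; squeezing the discriminant inequality should force $E = E^{**}$. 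This is where I expect the real work to lie.

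For assertion $(2)$, where $\mr{NS}(X) = \mathbb Z L$ and $E$ is assumed locally free, the goal is $\mu$-stability. Here the Picard-number-one hypothesis is decisive: any subsheaf $A \subset E$ has $c_1(A) = n_A L$ for an integer $n_A$, so slopes are heavily quantized. Since $E$ is Gieseker stable it is automatically $\mu$-semistable, so it suffices to rule out a destabilizing subsheaf $A$ of strictly smaller rank with $\mu_L(A) = \mu_L(E)$. Such an $A$ would force $\tfrac{n_A}{\rank A} = \tfrac{n_E}{\rank E}$; combining this with Gieseker stability $p_L(A) < p_L(E)$ and the discriminant equality $v(E)^2 = 0$ should yield a contradiction, essentially because equal slope plus $v(E)^2 = 0$ pins down the numerics too tightly for a proper Gieseker-stable subobject to coexist with local freeness. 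I would derive the arithmetic contradiction from the Hodge index theorem applied to $c_1(A)$ and $c_1(E)$ together with the $v^2 = 0$ constraint.

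Finally, the "In particular" statement follows formally: assume $\mr{NS}(X) = \mathbb Z L$ and $\rank E > 1$, and suppose $E$ fails to be a $\mu$-stable locally free sheaf. If $E$ were $\mu$-stable, then $(1)$ would make it locally free, a contradiction; if $E$ were locally free, then $(2)$ would make it $\mu$-stable, again a contradiction. Hence $E$ is neither $\mu$-stable nor locally free, which is exactly the claim. The main obstacle throughout is the discriminant bookkeeping in assertion $(1)$, where I must control how the length of the zero-dimensional cokernel interacts with the bound $v(E^{**})^2 \geq 0$ for $\mu$-stable bundles of rank at least two; the rest is comparatively mechanical once the numerical relations from $v(E)^2 = 0$ are in hand.
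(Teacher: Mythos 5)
Your sketch for part $(1)$ is correct and in fact takes a different, more self-contained route than the paper, which disposes of $(1)$ by citing step vii) of the proof of \cite[Proposition 4.1]{Huy}. Your double-dual argument closes cleanly: writing $\ell$ for the length of $Q=E^{\vee\vee}/E$, one has $v(Q)=0\oplus 0\oplus \ell$, hence $v(E^{\vee\vee})^2=v(E)^2+2\< v(E),v(Q)\>+v(Q)^2=-2r_E\ell$; since $E^{\vee\vee}$ is again $\mu$-stable (saturated subsheaves of $E^{\vee\vee}$ meet $E$ in subsheaves of the same rank and first Chern class), it is simple, so $v(E^{\vee\vee})^2\geq -2$, giving $r_E\ell\leq 1$, and $\rank E>1$ forces $\ell=0$. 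Note that you do not need, and should not assert, the ``sharper'' bound $v(E^{\vee\vee})^2\geq 0$ for $\mu$-stable bundles of rank $>1$: that is false in general (spherical $\mu$-stable bundles of higher rank have square $-2$), but the bound $\geq -2$ already suffices.

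Part $(2)$, however, has a genuine gap. The numerical data you propose to use --- $\mu_L(A)=\mu_L(E)$, $p_L(A)<p_L(E)$, $v(E)^2=0$ and $\mr{NS}(X)=\bb Z L$ --- yield exactly one conclusion: $\delta_A/r_A=\delta_E/r_E$ and $s_A/r_A<s_E/r_E$, hence $v(A)^2/r_A^2>v(E)^2/r_E^2=0$, i.e.\ $v(A)^2>0$. This is not a contradiction; it is perfectly consistent arithmetic, and the Hodge index theorem adds nothing since on a surface with $\mr{NS}(X)=\bb Z L$ the classes $c_1(A)$ and $c_1(E)$ are automatically proportional. Indeed, the paper's own proof extracts precisely this inequality $v(A)^2>0$ from the same numerics and then must invoke the Bridgeland machinery to obtain a contradiction: choose $\sigma_{(\beta,\omega)}\in V(X)$ with $\beta\omega$ equal to the common slope; then $E$ is $\sigma$-semistable of phase $0$ by Proposition \ref{A.6}$(1)$, while $A$ --- which is locally free because $E$ is locally free and $E/A$ is torsion free (this is exactly where the local freeness hypothesis enters) --- is $\sigma$-stable of phase $0$ by Proposition \ref{A.6}$(2)$; local finiteness then yields a maximal subobject $A'\subset E$ whose stable factors are all isomorphic to $A$ and with $\hom_X^0(A',E/A')=0$, and Lemma \ref{hom1} gives $\hom_X^1(A',A')\leq \hom_X^1(E,E)=2$, i.e.\ $v(A')^2\leq 0$, contradicting $v(A')^2=\ell^2v(A)^2>0$. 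A telltale sign that your outline cannot be completed as stated is that it never uses the local freeness of $E$: statement $(2)$ is false without that hypothesis, as the paper's Theorem \ref{B.4}, case $(2)(b)$, exhibits Gieseker stable torsion free sheaves with isotropic Mukai vector on a generic K3 which are neither $\mu$-stable nor locally free. Hence no purely numerical argument of the kind you propose can prove $(2)$.
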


\begin{proof}
The first assertion was proved in the step vii) in the proof of \cite[Proposition 4.1]{Huy}. 
%Let us prove the first assertion. 
%If $E$ is not locally free, by taking double dual of $E$, 
%we have the following short exact sequence:
%\[
%\begin{CD}
%0 @>>> E @>>> E ^{\vee \vee } @>>>  E^{\vee \vee }/E @>>> 0.
%\end{CD}
%\]
%Notice that $E^{\vee \vee }/E  \neq 0$ and $E^{\vee \vee }$ is locally free.  
%Then we can apply Lemma \ref{hom1} to the distinguished triangle $E^{\vee \vee }/E [-1] \to E \to E^{\vee \vee}$. 
%Here we put $S = E^{\vee \vee }/E$. 
%Thus we see 
%\[
%0 \leq \hom_X^1(E^{\vee \vee}, E ^{\vee \vee}) + \hom_X^1(S,S) \leq \hom_X^1(E,E)=2. 
%\]
%Since $S \neq 0$, we have $\hom_X^1(S,S)=2$ and $\hom_X^1(E^{\vee \vee}, E^{\vee \vee})=0$. 
%So $S $ is $\mca O_x$, where $\mca O_x$ is a structure sheaf of a closed point $x \in X$. 
%Since $E^{\vee \vee}$ is $\mu$-stable, $E^{\vee \vee}$ is spherical. 
%Thus $v(E^{\vee\vee})^2 =-2$. 
%
%We have $v(E^{\vee \vee } ) = v(E) + 0\+ 0\+ 1$. 
%By this formula we have 
%\[
%v(E^{\vee \vee})^2 = -2 \rank E.
%\] 
%This contradict $\rank E >1$. 

Hence, let us prove the second assertion. 
For any $F \in D(X)$ we put $v(F) = r_F \+ \delta _F \+ s_F $. 
Assume that $E$ is not $\mu$-stable. 
Then there is a $\mu$-stable subsheaf $A$ of $E$ such that $\mu_{L}(A) = \mu_{L}(E)$ and the quotient $E/A$ is torsion free. 
Since $E$ is locally free, $A$ is also locally free. 
We remark that $p_L (A) < p_L (E)$ since $E$ is Gieseker stable. 
We remark that $\frac{s_A}{r_A} < \frac{s_E }{r_E}$ by $\mu_L (A) = \mu_L (E)$. 
Hence we have
\[
0=\frac{v(E)^2 }{r_E^2} = \frac{\delta _E^2}{r_E ^2 } - 2 \frac{s_E }{r_E } < \frac{\delta _A^2}{r_A ^2 } - 2 \frac{s_A }{r_A } = \frac{v(A)^2 }{r_A^2}.  
\]
Thus $v(A) ^2 >0$. 

We choose $\sigma _{(\beta, \omega )}\in V(X)$ such that $\mu_{\omega }(E) = \mu_{\omega }(A) = \beta \omega$. 
Then $E$ is $\sigma _{(\beta, \omega)}$-semistable with phase $0$ and $A$ is $\sigma_{(\beta , \omega )}$-stable with phase $0$ by Proposition \ref{A.6}. 
Since $\sigma_{(\beta, \omega)}$ is locally finite we have a distinguished triangle 
\[
\begin{CD}
A' @>>> E @>>> E/A' ,
\end{CD}
\]
where all stable factors of $A'$ are $A$ and $\hom_X^0(A', E/A')=0$. 
Then by Lemma \ref{hom1}, we see $\hom_X^1(A' ,A') \leq 2$. 
Thus $v(A' )^2\leq 0$. 
However, since $A'$ is an extension of $A$, we have $v(A') = \ell v(A)$ for some $\ell \in \bb N$. Thus $v(A')^2 = \ell ^2 v(A)^2 >0$. 
This is contradiction. 
Hence $E$ is $\mu$-stable. 
\end{proof}

Suppose that $(X,L)$ is a generic K3 and take an element $v = r\+ \delta \+ s \in \mca N(X)$ with $r > 0$ and $v^2 \geq -2$. 
We define subsets of $V(X)$ depending on $v$. 

Case 1: $v^2 =-2$. 
\begin{eqnarray*}
V_{v}^+ &:=& \{ \sigma _{(\beta, \omega)}\in V(X) | \beta \omega < \frac{\delta \omega}{r}, \frac{1}{L^2}(\delta  L - r \beta L) \leq \frac{\omega ^2}{2}  \}. \\
V_{v}^0 &:=& \{ \sigma _{(\beta, \omega)}\in V(X) | \beta \omega = \frac{\delta \omega}{r} \}. \\
V_{v}^- &:=& \{ \sigma _{(\beta, \omega)}\in V(X) | \beta \omega > \frac{\delta \omega}{r}, \frac{-1}{L^2}(\delta  L - r \beta L) \leq \frac{\omega ^2}{2}  \}.
\end{eqnarray*}

Case 2: $v^2 \geq 0$. 
\begin{eqnarray*}
V_{v}^+ &:=& \{ \sigma _{(\beta, \omega)}\in V(X) | \beta \omega < \frac{\delta \omega}{r}, \frac{1}{L^2}(\delta  L - r \beta L)\Bigl( \frac{v^2}{2r}+1 \Bigr) \leq \frac{\omega ^2}{2}  \}. \\
V_{v}^0 &:=& \{ \sigma _{(\beta, \omega)}\in V(X) | \beta \omega = \frac{\delta \omega}{r}\}. \\
V_{v}^- &:=& \{ \sigma _{(\beta, \omega)}\in V(X) | \beta \omega > \frac{\delta \omega}{r}, \frac{-1}{L^2}(\delta  L - r \beta L)\Bigl( \frac{v^2}{2r}+1 \Bigr) \leq \frac{\omega ^2}{2}  \}.
\end{eqnarray*}

For instance, take a Gieseker stable torsion free sheaf $E$ on $(X, L)$ with $v(E)^2=0$ and put $v= v(E)= r\+ \delta \+ s $. 
Then the picture of the sets $V^+_v$, $V^0_v$ and $V^-_v$ are given by the following. 
%%%%%%%%%%ここに図があるよ
\begin{center}
\includegraphics[width=90mm,height=60mm,clip]{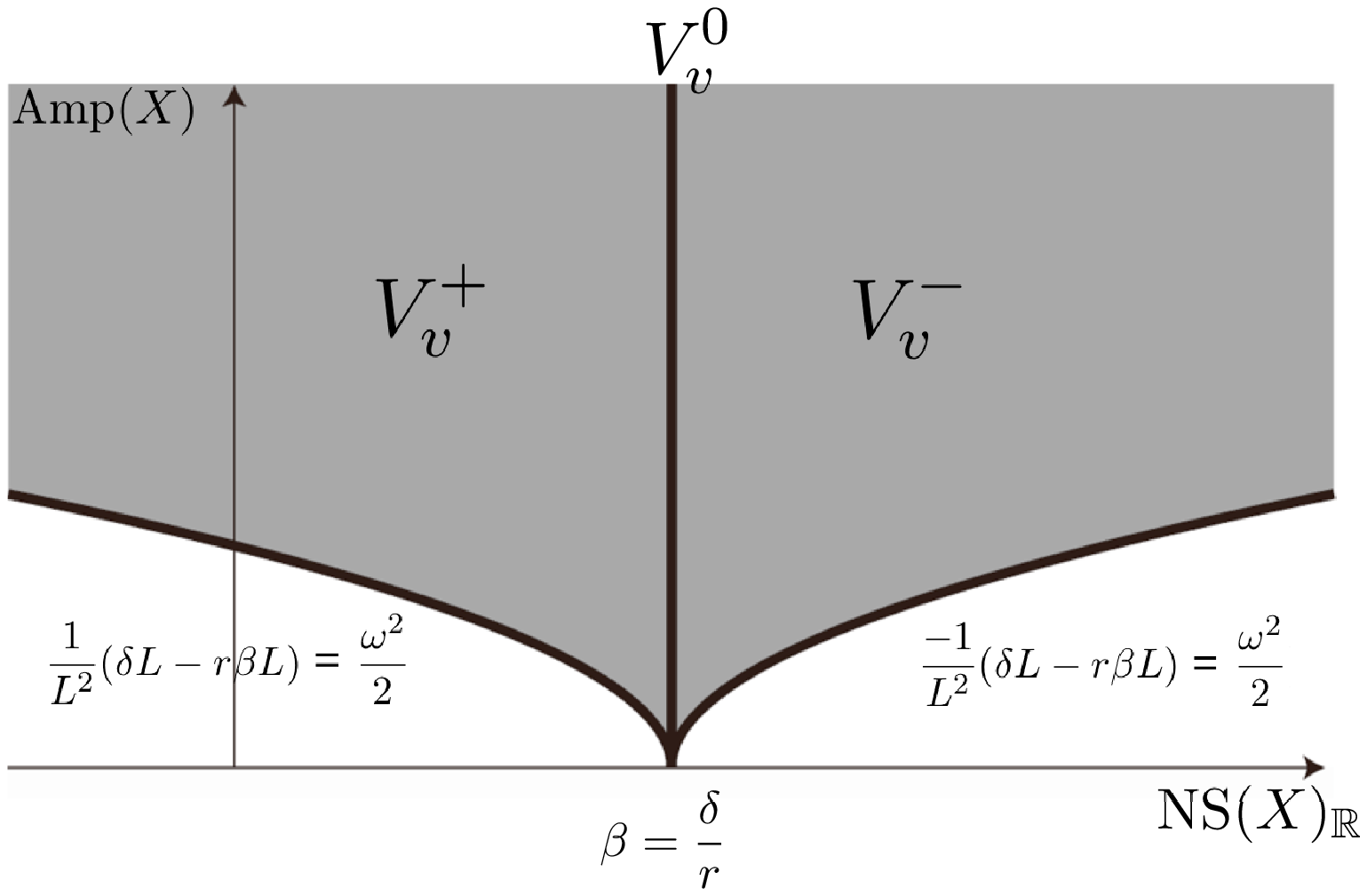}
\end{center}
%%%%%%%%%%
In Proposition \ref{B.2} (below), 
we show that the set $V_v^0$ is a ``wall'' if and only if $E$ is not a $\mu$-stable locally free but Gieseker stable torsion free.

\begin{prop}\label{B.2}
Let $(X,L) $ be a generic K3 and $E$ a Gieseker stable torsion free sheaf with $v(E)^2 \geq 0$. 

$(1)$ If the sheaf $E$ is not locally free then $E$ is not $\sigma$-semistable for any $\sigma \in V^-_{v(E)}$. 

$(2)$ If the sheaf $E$ is not $\mu$-stable then $E$ is not $\sigma$-semistable for any $\sigma \in V^-_{v(E)}$. 

$(3)$ Take an arbitrary $\sigma \in V^-_{v(E)}$. For the sheaf $E$, the following three conditions are equivalent$:$ $(a)$ $E$ is $\sigma$-stable, $(b)$ $E$ is $\sigma$-semistable and $(c)$ $E$ is $\mu$-stable and locally free. 
\end{prop}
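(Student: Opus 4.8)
The plan is to prove the three parts essentially in the order $(1)$, $(2)$, $(3)$, since $(3)$ will follow by combining $(1)$, $(2)$ with the $\sigma$-stability results of Section \ref{A}. Throughout, I fix $\sigma = \sigma_{(\beta,\omega)} \in V^-_{v(E)}$, so that $\beta\omega > \delta_E\omega/r_E$, i.e.\ $\mu_\omega(E) < \beta\omega$; this places $E$ in $\mca A[-1]$ and gives $\arg Z(E) < 0$. The governing idea is that the defining inequality of $V^-_{v(E)}$ is exactly the hypothesis (with the strict/non-strict subtlety noted below) needed to run Lemma \ref{A.8} and Corollary \ref{A.9}, so the burden is to produce, under the hypotheses of $(1)$ and $(2)$, a destabilizing \emph{quotient} $A$ of $E$ in $\mca A[-1]$ to which those estimates can be applied.

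For part $(1)$, suppose $E$ is not locally free. The natural candidate is the canonical exact sequence $0 \to E \to E^{\vee\vee} \to Q \to 0$ where $E^{\vee\vee}$ is the reflexive (hence, on a surface, locally free) hull and $Q$ is a nonzero torsion sheaf supported in dimension $0$. First I would check that $E^{\vee\vee}$ is again Gieseker stable with $\mu_\omega(E^{\vee\vee}) = \mu_\omega(E) < \beta\omega$ and that the inclusion $E \hookrightarrow E^{\vee\vee}$ furnishes, after passing to $\mca A[-1]$, a subobject (or equivalently a quotient in the triangulated sense) realizing $E$ as a non-split extension. The aim is to exhibit a distinguished triangle $F \to E \to A$ in $\mca A[-1]$ with $A$ a $\sigma$-stable quotient whose phase violates semistability; concretely I expect $Q[-1]$ or a piece thereof to sit as a subobject of $E$ in $\mca A[-1]$ with phase $0$ (torsion sheaves lie in $\mca T_{(\beta,\omega)}$, so $Q[-1]$ has maximal phase after the shift), directly contradicting the inequality $\arg Z(E) < 0$ required for $\sigma$-semistability. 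This mirrors the final paragraph of the proof of Proposition \ref{A.6}.

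For part $(2)$, suppose $E$ is not $\mu$-stable. Then there is a $\mu$-stable saturated subsheaf $A \subset E$ with $\mu_\omega(A) = \mu_\omega(E)$ and torsion-free quotient; here I would instead look at the quotient $E/A$, or rather build a triangle $F \to E \to A'$ in $\mca A[-1]$ as in the proof of Proposition \ref{B.1}, using Proposition \ref{A.6} to see that the $\mu$-stable locally free constituents are $\sigma$-stable of phase $0$. The destabilizing quotient of phase $0$ then contradicts $\arg Z(E) < 0$. The main obstacle I anticipate is precisely the boundary arithmetic: the inequality defining $V^-_{v(E)}$ is non-strict ($\leq$), whereas Theorem \ref{A.10} and Corollary \ref{A.11} require the \emph{strict} inequality to conclude stability. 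I must therefore argue semistability-destruction directly from the phase-$0$ quotient rather than route through those theorems, and keep careful track of whether $A$ has $\rank A < \rank E$ so that Lemma \ref{A.7}$(2)$ applies.

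Finally, part $(3)$ assembles the equivalence. The implication $(a)\Rightarrow(b)$ is trivial. For $(b)\Rightarrow(c)$, I argue contrapositively: if $E$ fails to be $\mu$-stable or fails to be locally free, then by $(1)$ and $(2)$ it is not $\sigma$-semistable, so $(b)$ fails. For $(c)\Rightarrow(a)$, the hypothesis $(c)$ says $E$ is $\mu_{L}$-stable locally free; since $\mr{NS}(X)=\bb Z L$ this is Corollary \ref{A.11}, provided the strict inequality holds. The delicate point is again the boundary case of $V^-_{v(E)}$ where equality holds in the defining inequality; there I expect to handle $\sigma$-stability by a limiting argument or by invoking Lemma \ref{A.8} together with Lemma \ref{A.7} to rule out a quotient $A$ with $\arg Z(A) \le \arg Z(E)$ by hand, exactly as in the proof of Theorem \ref{A.10} but tracking that no equality of phases can occur because $\hom^0_X(A,A)=1$ forces $v(A)^2 \ge -2$ and the orthogonal component $\nu_A^2 \le 0$ is strict unless $A$ is proportional to $L$. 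I expect this boundary analysis to be the genuinely technical heart of the argument.
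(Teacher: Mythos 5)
Your part $(1)$ is essentially the paper's argument: the triangle $S[-1] \to E \to E^{\vee\vee}$, with $S = E^{\vee\vee}/E$ a zero-dimensional torsion sheaf, exhibits $S[-1]$ as a phase-$0$ subobject of $E$ in $\mca A[-1]$, while $\sigma \in V^-_{v(E)}$ forces $\arg Z(E) \in (-1,0)$; this kills semistability. Your skeleton for part $(3)$ also matches the paper, and your worry about the non-strict inequality in the definition of $V^-_{v(E)}$ versus the strict one in Corollary \ref{A.11} is legitimate but harmless: the underlying estimates (Lemma \ref{A.8} and Corollary \ref{A.9}) are stated with $\leq$, so the proof of Theorem \ref{A.10} and Corollary \ref{A.11} runs verbatim on the boundary of $V^-_{v(E)}$.

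The genuine gap is in part $(2)$. For $\sigma_{(\beta,\omega)} \in V^-_{v(E)}$ one has $\beta\omega > \mu_\omega(E) = \mu_\omega(A)$ for every $\mu$-stable factor $A$ of $E$ of the same slope, so Proposition \ref{A.6} is inapplicable to $A$: its hypothesis is precisely $\delta_A\omega/r_A = \beta\omega$. Consequently $\mf{Im}Z(A) < 0$ and $A$ has phase in $(-1,0)$, just like $E$; there is no ``destabilizing quotient of phase $0$'', and the argument you borrow from Proposition \ref{B.1} (where $\sigma$ is deliberately chosen with $\beta\omega$ equal to the common slope, i.e.\ in $V^0$) does not transplant to $V^-$. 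What the paper actually does is: take a $\mu$-stable torsion-free quotient $A$ of $E$ with $\mu_L(A)=\mu_L(E)$, replace it by $A^{\vee\vee}$ so that it is locally free, keeping $\Hom_X^0(E,A) \neq 0$ and $s_E/r_E < s_A/r_A$ (Gieseker stability); prove the inclusion $V^-_{v(E)} \subset V^-_{v(A)}$, a computation using $\mr{NS}(X)=\bb Z L$, $r_A < r_E$, and the two cases $v(A)^2=-2$ and $v(A)^2\geq 0$; conclude from Corollary \ref{A.11} that $A$ is $\sigma$-stable; and finally compare phases through the identity $Z(A)/r_A = Z(E)/r_E + \bigl(v(A)^2/2r_A^2 - v(E)^2/2r_E^2\bigr)$, whose correction term is a negative real number, so that $\arg Z(A) < \arg Z(E)$, contradicting $\Hom_X^0(E,A)\neq 0$ between $\sigma$-semistable objects. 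None of these steps (the inclusion of the $V^-$ sets and the explicit phase comparison are the real content) appears in your proposal, so as written part $(2)$ fails, and with it the implication $(b) \Rightarrow (c)$ of part $(3)$. Your passing appeal to Lemma \ref{A.7}$(2)$ is also misplaced there, since that lemma assumes $E$ itself is $\mu$-stable and locally free, which is exactly what part $(2)$ denies.
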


\begin{proof}
For an object $F \in D(X)$ we put $v(F) = r_F \+ \delta _F \+ s_F$. 
Take an arbitrary element $\sigma_0=(\mca A, Z) \in V^-_{v(E)}$.

Let us prove the first assertion $(1)$. 
Suppose to the contrary that $E$ is $\sigma_0$-semistable. 
Since $E$ is not locally free, we have the following distinguished triangle by taking double dual of $E$:
\[
\begin{CD}
S[-1] @>>> E @>>> E^{\vee \vee },
\end{CD}
\]
where $S = E^{\vee \vee }/E$. 
Note that $S$ is a torsion sheaf with $\dim \mr{Supp}(S)=0$. 
Hence $S[-1]$ is $\sigma_0$-semistable with phase $0$. 
Since $\sigma_0 \in V_{v(E)}^-$ we see $\mf{Im}Z(E) < 0$. 
Hence $E$ is $\sigma$-semistable with phase $\phi \in (-1,0)$. 
Thus $\arg Z(E) < \arg Z(S[-1])$ and $\Hom _X(S[-1], E) $ should be $0$. 
This contradicts the above triangle. 
Hence $E$ is not $\sigma_0$-semistable.

Let us prove the second assertion $(2)$. 
Suppose to the contrary that $E$ is $\sigma_0$-semistable. 
Since $E$ is not $\mu$-stable, there is a torsion free quotient $A$ of $E$ such that $A$ is $\mu$-stable with $\mu_{L}(A)=\mu_L(E)$. 
Since $E$ is Gieseker stable we have $p_L (E) < p_L(A)$. 
Thus we see $\frac{s_E }{r_E} < \frac{s_A }{r_A}$. 
Moreover we can assume that $A$ is locally free. 
In fact if necessary it is enough to take the double dual of $A$. 
Then we see that $\mu_{L}(A^{\vee \vee}) = \mu_L(E)$, $\frac{s_E}{r_E }< \frac{s_{A^{\vee \vee}}}{r_A^{\vee \vee}}$ and $A^{\vee \vee}$ is $\mu$-stable. 
Thus we can assume that $A$ is a $\mu$-stable locally free sheaf.  
Note that $\Hom_X^0(E,A)\neq 0$.

We show $V_{v(E)}^{-} \subset V_{v(A)}^{-}$. 
Note that 
\begin{eqnarray}
r_A \beta L - \delta _A L	&=& r_A (\beta L - \frac{\delta_A }{r_A}L) \notag\\
						&=& r_A (\beta L - \frac{\delta_E }{r_E}L) \notag \\
						&<& r_E (\beta L - \frac{\delta_E }{r_E}L)=r_E \beta L - \delta _E L. \label{kimo}
\end{eqnarray}
Here we use the fact $\mr{NS}(X) = \bb Z L$ in the second inequality. 

Since $A$ is $\mu$-stable we have $v(A)^2\geq -2$. 
By the definition of $V_{v(A)}^-$, we have to consider two cases. 
We first assume that $v(A)^2 =-2$. 
Since $v(E)^2 \geq 0$, we have 
\[
1 \leq \frac{v(E)^2}{2r_E} +1.
\]
Then we see 
\[
r_A \beta L - \delta _A L < (r_E \beta L - \delta _E L) \Bigl( \frac{v(E)^2}{2r_E}+1  \Bigr).
\]
Hence  we see $V_{v(E)}^- \subset V_{v(A)}^-$ by the definition of $V_{v(E)}^-$. 

Next suppose that $v(A)^2 \geq 0$. 
Then by using the fact that $\mr{NS}(X) = \bb Z L$ we have 
\begin{eqnarray}
\frac{v(A)^2}{r_A}	&=&	\Bigl( \frac{\delta _A^2}{r_A^2} - 2\frac{s_A}{r_A}  \Bigr) r_A \notag \\
					&<&	\Bigl( \frac{\delta _E^2}{r_E^2} - 2\frac{s_E}{r_E}  \Bigr) r_A \notag \\
					&<&	\Bigl( \frac{\delta _E^2}{r_E^2} - 2\frac{s_E}{r_E}  \Bigr) r_E = \frac{v(E)^2}{r_E}. \label{kimo2}
\end{eqnarray}
By two inequalities (\ref{kimo}) and (\ref{kimo2}) we see 
\[
(r_A \beta L - \delta _A L) \Bigl( \frac{v(A)^2 }{2 r_A } +1  \Bigr) < (r_E \beta L - \delta _E L) \Bigl( \frac{v(E)^2 }{2 r_E } +1  \Bigr)
\]
Thus we have proved $V_{v(E)}^- \subset V_{v(A)}^-$. 

Recall that $A$ is a $\mu$-stable locally free sheaf. 
Since the stability condition $\sigma _0$ is in $ V_{v(A)}^-$, $A$ is $\sigma_0$-stable by Corollary \ref{A.11}. 
Now we have
\begin{eqnarray*}
\frac{Z(A)}{r_A}	&=&	\frac{v(A)^2}{2r_A^2} + \frac{1}{2}\Bigl(\omega  + \sqrt{-1}\bigl( \frac{\delta _A}{r_A}-\beta \bigr)\Bigr)^2\\
					&=&	\frac{v(A)^2}{2r_A^2} -\frac{v(E)^2}{2r_E^2}+ \frac{v(E)^2}{2r_E^2} + \frac{1}{2}\Bigl(\omega  + \sqrt{-1}\bigl( \frac{\delta _E}{r_E}-\beta \bigr)\Bigr)^2\\
					&=& \frac{Z(E)}{r_E} +\frac{v(A)^2}{2r_A^2} -\frac{v(E)^2}{2r_E^2}.
\end{eqnarray*}
Here we used the fact $\mr{NS}(X)= \bb Z L $ in the second equality. 
Since $\mu_{L}(A) = \mu_{L}(E)$, $\frac{s_E }{r_E} < \frac{s_A}{r_A}$ and $\mr{NS}(X) = \bb Z L$, we see that $\frac{v(A)^2}{2r_A^2} -\frac{v(E)^2}{2r_E^2}$ is a negative number. 
Hence we see 
\[
\arg \frac{Z(A)}{r_A} < \arg \frac{Z(E)}{r_E}.  
\]
This contradicts $\Hom_{X}^0(E,A)\neq0$ since both $A$ and $E$ are $\sigma_0$-semistable. 
Thus $E$ is not $\sigma_0$-semistable.

Let us prove the third assertion. 
We claim $(a) \Rightarrow (b) \Rightarrow (c) \Rightarrow (a)$. 
The first claim $(a) \Rightarrow (b)$ is trivial. 
The second claim $(b) \Rightarrow (c)$ follows from the contrapositions of Proposition \ref{B.2} $(1)$ and $(2)$. 
The third claim $(c) \Rightarrow (a)$ is nothing but Corollary \ref{A.11}. 
Thus we have finished the proof. 
\end{proof}

Take a stability condition $\sigma_{(\beta, \omega)} \in V(X)$ and  a $\mu$-semistable torsion free sheaf $E$ with $\mu_{\omega}(E) = \beta \omega$. 
By Proposition \ref{A.6}, if $E$ is not a $\mu$-stable locally free sheaf,  
then $E$ is properly $\sigma$-semistable. 
Hence it makes sense to consider a Jordan-H\"{o}lder filtration of $E$ with respect to $\sigma_{(\beta, \omega)}$. 

\begin{lem}\label{B.3}
Let $X$ be a projective K3 surface. Take a $\sigma_{(\beta, \omega)} \in V(X)$. 
Assume that $E$ is a $\mu_{\omega}$-semistable torsion free sheaf with $\mu_{\omega}(E)=\beta \omega$ and the filtration
\[
0=E_0 \subset E_1 \subset E_2 \subset \cdots \subset E_{n-1}\subset E_n =E
\] 
is a Jordan-H\"{o}lder filtration of $E$ with respect to $\mu_{\omega}$-stability. 
Namely $A_i = E_i/ E_{i-1}$ $(i=1, 2, \cdots ,n)$ is a $\mu_{\omega}$-stable torsion free sheaf with $\mu_{\omega }(A_i)=\mu_{\omega}(E)$. 
Then $\sigma$-stable factors of $E$ consist of all $A_i^{\vee \vee}$ and $\sigma$-stable factors of $A_i^{\vee \vee}/ A_i[-1]$ $(i=1,2,\cdots ,n)$. 
\end{lem}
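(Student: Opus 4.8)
The plan is to show that the given $\mu_\omega$-Jordan--H\"older filtration of $E$ is already a filtration inside the abelian category $\mca P(0)$ of $\sigma$-semistable objects of phase $0$, and then to refine it factor-by-factor into a $\sigma$-Jordan--H\"older filtration. First I would record that every relevant object sits in $\mca P(0)$. Since each $E_i$ is an extension of the $\mu_\omega$-stable torsion free sheaves $A_1, \dots, A_i$, all of slope $\mu_\omega = \beta\omega$, it is itself $\mu_\omega$-semistable, torsion free, and of slope $\beta\omega$; hence by Proposition \ref{A.6} $(1)$ each $E_i$, and in particular each quotient $A_i$, lies in $\mca P(0)$. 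Thus $0 = E_0 \subset E_1 \subset \cdots \subset E_n = E$ is genuinely a filtration in $\mca P(0)$ with successive quotients $A_i$.

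Next I would analyse a single $\mu_\omega$-stable factor $A_i$. On the smooth surface $X$ the reflexive hull $A_i^{\vee\vee}$ is locally free, and since the torsion quotient $S_i := A_i^{\vee\vee}/A_i$ is supported in dimension $0$ we have $c_1(A_i^{\vee\vee}) = c_1(A_i)$ and $\rank A_i^{\vee\vee} = \rank A_i$, so $\mu_\omega(A_i^{\vee\vee}) = \mu_\omega(A_i) = \beta\omega$. Intersecting any would-be destabilizing subsheaf of $A_i^{\vee\vee}$ with $A_i$ (the two agree off a codimension $2$ locus, so ranks and first Chern classes are unchanged) contradicts the $\mu_\omega$-stability of $A_i$, so $A_i^{\vee\vee}$ is again $\mu_\omega$-stable, and Proposition \ref{A.6} $(2)$ gives that $A_i^{\vee\vee}$ is $\sigma$-stable with phase $0$. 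Rotating the defining triangle $A_i \to A_i^{\vee\vee} \to S_i \to A_i[1]$ to $S_i[-1] \to A_i \to A_i^{\vee\vee} \to S_i$, and noting that $S_i[-1]$ is a phase-$0$ iterated extension of the objects $\mca O_x[-1]$ (each $\mca O_x$ being $\sigma$-stable of phase $1$ for $\sigma \in V(X)$), I obtain a short exact sequence $0 \to S_i[-1] \to A_i \to A_i^{\vee\vee} \to 0$ in $\mca P(0)$.

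Then I would refine. Pulling the subobject $S_i[-1] \subset A_i = E_i/E_{i-1}$ back along $E_i \twoheadrightarrow A_i$ inserts a term $E_{i-1} \subset E_i' \subset E_i$ with $E_i'/E_{i-1} \cong S_i[-1]$ and $E_i/E_i' \cong A_i^{\vee\vee}$; refining each $S_i[-1]$ further into its $\sigma$-stable factors (which exist since $\sigma$ is locally finite) produces, after doing this for every $i$, a filtration of $E$ in $\mca P(0)$ all of whose successive quotients are $\sigma$-stable of phase $0$. This is a $\sigma$-Jordan--H\"older filtration, and its stable factors are exactly the $A_i^{\vee\vee}$ together with the $\sigma$-stable factors of the $S_i[-1] = (A_i^{\vee\vee}/A_i)[-1]$, as asserted; since the multiset of $\sigma$-stable factors is independent of the chosen filtration, the claim follows.

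The main obstacle, I expect, is the homological bookkeeping needed to promote the distinguished triangles into honest short exact sequences in the abelian category $\mca P(0)$ and to verify that the pulled-back filtration is well defined there. The genuinely substantive input, namely the $\sigma$-stability of $A_i^{\vee\vee}$ and the $\sigma$-semistability of $E$ and of each $S_i[-1]$, is supplied directly by Proposition \ref{A.6} and the stability of the skyscrapers $\mca O_x$, so no new numerical estimates are required.
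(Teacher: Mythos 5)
Your proof is correct and follows essentially the same route as the paper: reduce to each $\mu_\omega$-stable factor $A_i$, use the double-dual triangle $S_i[-1] \to A_i \to A_i^{\vee\vee}$ together with Proposition \ref{A.6} and the $\sigma$-stability of skyscrapers to identify the stable factors, and then combine the filtrations. The only difference is presentational — you work systematically inside $\mca P(0)$ and justify the $\mu_\omega$-stability of $A_i^{\vee\vee}$ explicitly, details the paper leaves implicit.
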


\begin{proof}
We put $\sigma = \sigma _{(\beta,\omega)}$. 
All $A_i$ ($i = 1,2, \cdots k$) are $\sigma$-semistable by Proposition \ref{A.6}. 
If we obtain JH filtrations of $A_i$, we can construct a JH filtration of $E$ by combining JH filtrations of $A_i$. 
Hence it is enough to prove the assertion for $\mu$-stable torsion free sheaves. 

Suppose that $A$ is a $\mu_{\omega}$-stable torsion free sheaf with $\mu_{\omega }(A) = \beta \omega $ and put $S_A = A^{\vee \vee} /A$. 
Then we have a distinguished triangle:
\[
\begin{CD}
S_A[-1] @>>> A @>>> A^{\vee \vee} @>>> S_A. 
\end{CD}
\]
Since the dimension of the support of $S_A$ is $0$, there are finite closed points $\{ x_1, x_2 , \cdots ,x_k \}$\footnote{There may be $i$ and $j$ in $\{1,2, \cdots, k\}$ so that $x_i=x_j$. } such that 
\[
\resizebox{0.99\hsize}{!}{
\xymatrix{
0\ar[rr]	&	&	\mca O_{x_1}\ar[ld]\ar[rr]	& 	& F_2\ar[r]\ar[ld] & \cdots \ar[r] & F_{k-1}\ar[rr]	&	  &F_k=S_A .\ar[ld]\\ 
	&\mca O_{x_1}\ar@{-->}[ul]^{[1]}& 		&\mca O_{x_2}\ar@{-->}[ul]^{[1]}& 	  &		  &			&\mca O_{x_k}\ar@{-->}[ul]^{[1]}& 
}
%\label{HNF}
}
\]
Since $\mca O_{x_i}$ ($i=1,2, \cdots k$) and $A^{\vee \vee}$ are $\sigma$-stable, these are $\sigma$-stable factors of $A$ and the JH filtration of $A$ with respect to $\sigma$ is given by  
\[
\resizebox{0.99\hsize}{!}{
\xymatrix{
0 \ar[rr]	&	&	\mca O_{x_1}[-1]\ar[ld]\ar[rr]	& 	& F_2[-1]\ar[r]\ar[ld] & \cdots \ar[r] & F_{k-1}[-1]\ar[rr]	&	  &S_A[-1] \ar[ld]\ar[rr] & & A.\ar[ld]\\ 
	&\mca O_{x_1}[-1]\ar@{-->}[ul]^{[1]}& 		&\mca O_{x_2}[-1]\ar@{-->}[ul]^{[1]}& 	  &		  &			&\mca O_{x_k}[-1]\ar@{-->}[ul]^{[1]}& &A^{\vee \vee }\ar@{-->}[lu]^{[1]}&
}
%\label{HNF}
}
\]
Thus we have finished the proof. 
\end{proof}

In the next theorem, we give a classification of moduli spaces of Gieseker stable torsion free sheaves on a generic K3 $(X,L)$. 
Let $Y$ be the fine moduli space of Gieseker stable torsion free sheaves with Mukai vector $v=r \+ \delta \+ s$ and let $\mca E$ be a universal family of the moduli $Y$. 
We define an equivalence $\Phi_{\mca E}:D(Y) \to D(X)$ by 
\[
\Phi_{\mca E}(-)= \bb R \pi _{X*} (\mca E \stackrel{\bb L}{\otimes}\pi_Y^*(-) ), 
\]
where $\pi _X$ (respectively $\pi_Y$) is the projection $X \times Y \to X$ (respectively $X \times Y \to Y$). 
To avoid the complexity in notations, we write $V^+$ (respectively $V^0$ and $V^-$) instead of $V_{v(\Phi(\mca O_y))}^+$ (respectively $V_{v(\Phi(\mca O_y))}^0$ and $V_{v(\Phi(\mca O_y))}^-$) for the given equivalence $\Phi_{\mca E}:D(Y) \to D(X)$.

\begin{thm}\label{B.4}
Notations are being as above. 

$(1)$ If $r$ is not a square number then $Y$ is the fine moduli space of $\mu$-stable locally free sheaf. 

$(2)$ Assume that $r$ is a square number. Then one of the following two cases occurs$:$
\begin{itemize}
\item[$(a)$] $Y$ is the fine moduli space of $\mu$-stable locally free sheaves. 
\item[$(b)$] $Y$ is the fine moduli space of properly Gieseker stable torsion free sheaves and $Y$ is isomorphic to $X$. 
Moreover $\Phi_{\mca E} $ is the spherical twist by a spherical locally free sheaf up to an isomorphism $Y \to X$. 
\end{itemize}
\end{thm}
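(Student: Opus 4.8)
The plan is to analyze the sheaves $E_y := \Phi_{\mca E}(\mca O_y)$ for $y \in Y$, which all carry the Mukai vector $v$ with $v^2=0$ (since $\Phi_{\mca E}$ is an equivalence of derived categories of K3 surfaces and $Y$ is a surface, $v$ is primitive isotropic). Each $E_y$ is Gieseker stable and torsion free. By Proposition \ref{B.1}, on the generic K3 $(X,L)$ every such $E_y$ is \emph{either} $\mu$-stable locally free \emph{or} properly Gieseker stable, i.e.\ neither $\mu$-stable nor locally free; the rank one case is trivial, since a rank one torsion free sheaf with $v^2=0$ is a line bundle. Thus the whole statement reduces to proving that the mere occurrence of one properly Gieseker stable $E_{y_0}$ forces $r$ to be a square, $Y \cong X$, and $\Phi_{\mca E}$ to be a spherical twist.

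First I would locate the relevant wall. For $\sigma \in V^+$ every $E_y$ is $\sigma$-stable by Corollary \ref{A.5}, and they share a common phase because they share the Mukai vector $v$; hence $V^+ \subset \Phi_{\mca E *}(U(Y))$ by Proposition \ref{6.1}. For $\sigma \in V^-$, Proposition \ref{B.2} says $E_y$ is $\sigma$-semistable if and only if it is $\mu$-stable locally free. This yields a clean dichotomy. Either every $E_y$ is $\mu$-stable locally free, in which case $V^- \subset \Phi_{\mca E *}(U(Y))$ as well, $V^0$ is not a wall, and we land in case $(a)$ (hence also $(1)$); or some $E_{y_0}$ is properly Gieseker stable, in which case $E_{y_0}$ is not $\sigma$-semistable for $\sigma\in V^-$, so $V^- \cap \Phi_{\mca E *}(U(Y)) = \emptyset$, and therefore $V^0 \subset \Phi_{\mca E *}(\partial U(Y))$.

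In this second case I would pull the wall back to $\partial U(Y)$ and apply Theorem \ref{wall} at a general point. Since $Y$ is a Fourier--Mukai partner of a Picard number one K3 surface, its Picard number (equivalently the rank of its transcendental lattice) is preserved, so $\mr{NS}(Y)=\bb Z H$ carries no $(-2)$-curve and case $(C_k)$ is excluded. Hence $(A^+)$ or $(A^-)$ holds: there is a spherical locally free sheaf $A$ on $Y$, independent of $y$, whose twist controls the Jordan--H\"older filtration of every $\mca O_y$. Applying $\Phi_{\mca E}$ and using $\Phi_{\mca E}\circ T_A = T_{\Phi_{\mca E}(A)}\circ \Phi_{\mca E}$, I get that the $\sigma$-stable factors of $E_y$ for $\sigma \in V^0$ are $B:=\Phi_{\mca E}(A)$, occurring with multiplicity $\rank A$, together with a single further factor $T_B^{\pm 1}(E_y)$.

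Finally I would match this against Lemma \ref{B.3}, which lists the $\sigma$-stable factors of the $\mu$-semistable sheaf $E_y$ as the $\mu$-stable locally free sheaves $A_i^{\vee\vee}$ together with skyscrapers $\mca O_{x_j}[-1]$. Since $B$ is spherical, $v(B)^2=-2$, while $\mca O_x[-1]$ is isotropic, $B$ must be the common double dual of all the $A_i$, and the remaining factor $T_B^{\pm 1}(E_y)$ must be a single $\mca O_{x}[-1]$ (so $E_y^{\vee\vee}/E_y$ has length one). Thus $v(E_y) = (\rank A)\,v(B) + (0\+0\+(-1))$; imposing $v(E_y)^2=0$ together with $v(B)^2=-2$ and $\<v(B),\,0\+0\+(-1)\> = \rank B$ gives $\rank A = \rank B$, whence $r = \rank A \cdot \rank B = (\rank A)^2$ is a square. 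The assignment $y \mapsto x(y)$ determined by $T_B(E_y)=\mca O_{x(y)}[-1]$ makes $T_B\circ\Phi_{\mca E}$ carry every point object to a shifted point object, forcing an isomorphism $Y \cong X$ and exhibiting $\Phi_{\mca E}$ as a spherical twist up to that isomorphism. I expect the main obstacle to be the bookkeeping when transporting a general boundary stability condition of $U(Y)$ across $\Phi_{\mca E *}$ into $V(X)$: one must fix the shift ambiguity distinguishing $(A^+)$ from $(A^-)$ so that all factors genuinely sit in phase $0$, check that the relevant point of $\partial U(Y)$ may be taken general, and upgrade ``sends points to shifted points'' to an honest isomorphism; the Mukai vector computation yielding the square condition is then routine.
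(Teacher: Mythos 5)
Your proposal follows the paper's own strategy almost step for step: $V^+ \subset \Phi_{\mca E*}U(Y)$ via Corollary \ref{A.5} and Proposition \ref{6.1}; the dichotomy forced by Propositions \ref{B.2} and \ref{A.6}, placing $V^0$ in the boundary of $\Phi_{\mca E*}U(Y)$ as soon as one $E_{y_0}$ is properly Gieseker stable; Theorem \ref{wall} at a general boundary point, with case $(C_k)$ excluded because $Y$ has Picard number one; the Mukai-vector computation forcing $r$ to be a square; and the points-to-shifted-points endgame via \cite[Corollary 5.23]{Huy2}. The one place you genuinely deviate is in identifying the $\sigma_0$-stable factors: you match the multiset coming from Theorem \ref{wall} against Lemma \ref{B.3}, using $v^2=-2$ versus $v^2=0$ to force exactly one skyscraper factor and $A_i^{\vee\vee}\cong B$ for all $i$. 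The paper instead applies Lemma \ref{hom1} to the double-dual triangle $S_y[-1]\to \mca E_y \to \mca E_y^{\vee\vee}$ to get $\hom_X^1(S_y,S_y)=2$, hence $S_y\cong\mca O_x$, and only then invokes Theorem \ref{wall} to write $\mca E_y^{\vee\vee}=S^{\oplus\ell}$. Your matching argument is a legitimate alternative, and it actually puts Lemma \ref{B.3} to work, which the paper states but does not explicitly invoke in this proof; both routes yield the same numerical identity $r=(\rank A)^2$.

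There is, however, one genuine gap: the resolution of the $(A^+)/(A^-)$ ambiguity, which you flag as ``bookkeeping'' but never carry out, and which your closing sentence silently decides the wrong way. Writing $B=\Phi_{\mca E}(A)$ (up to shift), case $(A^+)$ is precisely the case $T_B(E_y)=\mca O_{x(y)}[-1]$ that you use; it would give $T_B\circ\Phi_{\mca E}=M\otimes f_*(-)[-1]$, i.e.\ $\Phi_{\mca E}$ an \emph{inverse} spherical twist up to isomorphism, which is not the assertion of $(2)(b)$. The paper shows case $(A^+)$ cannot occur at all: it would force $\Phi_{\mca E}(\mca O_y)=M\otimes f_*\bigl(T_B^{-1}(\mca O_y)\bigr)[-1]$, and the inverse twist of a point by a positive-rank spherical locally free sheaf is a genuine two-term complex (cohomology in two degrees), while the left-hand side is the sheaf $\mca E_y$ --- a contradiction; hence only $(A^-)$ survives, giving $\Phi_{\mca E}(-)=M\otimes f_*(T_B(-))[-1]$. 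Note that this is not fixable by the phase normalization you anticipate: after transport both cases have all stable factors sitting in phase $0$ (the spherical factor appears as a sheaf, shifted appropriately, in either case), so phases cannot distinguish them; the sheaf-versus-complex argument is what does. Your argument through ``$r$ is a square and $Y\cong X$'' is unaffected, since both cases send points to shifted points; only the ``Moreover'' clause of $(2)(b)$ requires this missing step.
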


\begin{proof}
We note that $Y$ is the fine moduli space of properly Gieseker stable torsion free sheaves or the moduli of $\mu$-stable locally free sheaf by Proposition \ref{B.1}. 
Let $\Phi_{\mca E*}$ be a natural map $\Phi_{\mca E *}:\Stab (Y) \to \Stab (X)$ induced by $\Phi_{\mca E}$. 
We put $\mca E_y = \Phi_{\mca E}(\mca O_y)$. 
Then for any $\sigma \in V^+$, $\mca E_y$ is $\sigma$-stable by Corollary \ref{A.5}, and the phase of $\mca E_y$ does not depend on $y \in Y$. 
Hence we see $V^+ \subset \Phi_{\mca E*}U(Y)$. 
By Proposition \ref{A.6} it is enough to show that $V^0 \cap \Phi_{\mca E *}U(Y)\neq \emptyset$. 

Suppose to the contrary that $V^0 \cap \Phi_{\mca E *}U(Y) = \emptyset$. 
%Then by Proposition \ref{B.2}, we have
%\[
%V^0 \subset \partial \Phi _{\mca E * }U(Y) 
%\] 
We first show that $V^0$ is contained in the boundary $\partial \Phi_{\mca E *}U(Y) $ under the assumption $V^0 \cap \Phi_{\mca E *}U(Y) = \emptyset$. 
Since $V^0$ is in the closure of $V^+$, $V^0$ is also in the closure of $\Phi_{\mca E *} U(Y)$. 
Then we claim $V^- \cap \Phi_{\mca E * } U(Y) = \emptyset$. 
In fact, if $V^- \cap \Phi_{\mca E * }U(Y) \neq \emptyset$ then $\mca E_y$ is a $\mu$-stable locally free sheaf for all $y \in Y$ by Proposition \ref{B.2}. 
Moreover $V^0$ is in $\Phi_{\mca E* }U(Y) $ by Proposition \ref{A.6}. 
This contradicts $V^0 \cap \Phi_{\mca E *}U(Y) = \emptyset$. 
Hence we see $V^- \cap \Phi_{\mca E*}U(Y) = \emptyset$. 
Thus $V^0$ is contained in the boundary $\partial (\Phi_{\mca E*}U(Y) )$. 
Moreover any $\sigma \in V^0$ is general in $\partial (\Phi_{\mca E*}U(Y) )$ since there are no walls in $V(X)$. 

Take a stability condition $\sigma_0 \in V^0$. 
Recall that the Picard number of $X$ is $1$. 
Since $Y$ is a Fourier-Mukai partner of $X$, the Picard number of $Y$ is also $1$. 
Since there is no $(-2)$-curve in $Y$, $\mca O_y$ is properly $\Phi_{\mca E*}^{-1}\sigma_0$-semistable for all $y \in Y$ by Theorem \ref{wall}. 
Hence $\mca E_y$ is not $\sigma_0$-stable but $\sigma _0$-semistable. 
Moreover we see that $\mca E_y$ is not a locally free sheaf by Propositions \ref{B.1} and \ref{B.2}. 
Hence we have the following distinguished triangle by taking the double dual of $\mca E_y$:
\[
\begin{CD}
S_y[-1] @>>> \mca E_y @>>> \mca E_y^{\vee \vee}@>>> S_y ,
\end{CD}
\]
where $S_y = \mca E_y^{\vee \vee }/\mca E_y$. 
By Lemma \ref{hom1}, we see 
\[
\hom_X^1(S_y, S_y) =2\mbox{ and }\hom_X^1(\mca E_y^{\vee \vee}, \mca E_y^{\vee \vee})=0.
\] 
Thus there is a closed point $x \in X$ such that $S_y=\mca O_x$.  
Since $\sigma _0$ is in $V(X)$, $\mca O_x$ is a $\sigma_0$-stable factor of $\mca E_y$. 
By Theorem \ref{wall}, $\mca E_y^{\vee \vee}$ is a direct sum of a spherical object $S$. 
Since $\mca E^{\vee \vee}_y$ is a locally free sheaf, $S$ is an also locally free sheaf with $\mu_{L}(S) = \mu_{L}(\mca E_y^{\vee \vee})$. 
Thus we can put $\mca E_y^{\vee \vee} = S^{\oplus \ell}$. 

Since $v(\mca E_y) = v(\mca E_y^{\vee \vee})-0\+ 0\+ 1$, we have
\begin{equation}
0 = v(\mca E_y)^2 = v(\mca E_y^{\vee \vee})^2-2 \< v(\mca E_y^{\vee \vee}), v(\mca O_x)\>.  \label{koutousiki}
\end{equation}
Furthermore we have $v(\mca E_y^{\vee \vee})^2 = -2 \ell ^2$ and 
\[
\< v(\mca E_y^{\vee \vee}), v(\mca O_x) \> = -\rank \mca E_y^{\vee\vee} = -\rank \mca E_y = -r.
\] 
%Since $\mca E^{\vee \vee}_y$ is locally free we have $\hom_X^1(\mca E_y^{\vee \vee}, \mca O_x)=\hom_X^2(\mca E_y^{\vee \vee}, \mca O_x)=0$. 
Thus we have 
\[
2 \ell ^2 = 2 r. 
\]
Hence if $r$ is not a square number then we have $V^0 \cap \Phi _{\mca E *} U(Y)\neq \emptyset $. 
Thus $\mca E_y$ is a $\mu$-stable locally free sheaf for all $y \in Y$ by Proposition \ref{A.6}. 
This gives the proof of the first assertion $(1)$. 

Suppose that $\rank \mca E_y$ is a square number. 
Then a JH filtration of $\mca E_y$ is given by the following triangle:
\[
\begin{CD}
\mca O_x[-1] @>>> \mca E_y @>>> S^{\oplus r} @>>> \mca O_x.
\end{CD}
\]
Since $\mca O_x[-1]$ is the unique stable factor of $\mca E_y$ with an isotropic Mukai vector, one of the following two cases will occur by Theorem \ref{wall} and by the uniqueness of stable factors up to permutations:
\begin{itemize}
\item[(i)] For any $y \in Y$, there is a closed point $x \in X$ such that $\Phi_{\mca E}\circ T_B (\mca O_y) = \mca O_x[-1]$ where $B$ is a spherical locally free sheaf on $Y$ and $T_B$ is the spherical twist by $B$. 
\item[(ii)] For any $y \in Y$, there is a closed point $x \in X$ such that $\Phi_{\mca E}\circ T_B^{-1} (\mca O_y) = \mca O_x[-1]$ where $B$ is a spherical locally free sheaf on $Y$. 
\end{itemize}
We remark that $B$ does not depend on $y$ by Theorem \ref{wall}. 

Assume that the first case (i) occurs. 
Then, as is well-known, there is a line bundle $M$ on $X$ and an isomorphism $f: Y \to X$ such that $\Phi _{\mca E} \circ T_B(-) = M \otimes f_*(-) [-1]$\footnote{For instance see \cite[Corollary 5.23]{Huy2}}. 
Thus we have 
\begin{equation}
\Phi_{\mca E}(\mca O_y) = M \otimes f_* (T_B^{-1}(\mca O_y))[-1]. \label{kome3}
\end{equation}
Then the right hand side of (\ref{kome3}) is properly complex and the left hand side is a sheaf. 
This is contradiction. 
Hence the second case (ii) should occur. 
Then $\Phi_{\mca E}(-)$ is given by $M \otimes f_* (T_B(-))[-1]$. 
This gives the proof of the second assertion $(2)$. 
\end{proof}

\begin{ex}\label{B.5}
Let $(X,L)$ be a generic K3 and let $E$ be a Gieseker stable torsion free sheaf with $v(E) = r\+ n L \+ s$. 
Since $\mr{NS}(X)= \bb Z L$, $E$ is $\mu$-stable if $\gcd \{ r,n \}=1$ by \cite[Lemma 1.2.14]{HL}. 
Then $E$ is a $\mu$-stable locally free sheaf by Proposition \ref{B.1}. 
Moreover if $\gcd \{ r, nL^2,s \}=1$ then the moduli space containing $E$ is a fine moduli space. 

Let $(X,L)$ be a generic K3 with $L^2=6$. 
Take $v \in \mca N(X)$ as $v= 12\+ 10 L \+ 25$. 
Then by \cite[Corollary 4.6.7]{HL} the moduli space $M_L(v)$ of Gieseker stable torsion free sheaves with Mukai vector $v$ is the fine moduli space since $\gcd  \{ 12, 10 L^2, 25 \}=1$. 
By Theorem \ref{B.4}, $M_L(v)$ is the moduli space of $\mu_L$-stable locally free sheaves, although $\gcd \{12,10  \}=2$. 
\end{ex}

\section{Second application}\label{C}

The goal of this section is to generalize \cite[Theorem 1.1]{Kaw} to arbitrary projective K3 surfaces. 

In \cite{Kaw} the author describes a picture of $(T_L)_*U(X) \cap V(X)$ by using \cite[Theorem 1.2]{Kaw} where $T_L$ is a spherical twist by an ample line bundle $L$. 
Instead of the theorem we use Lemma \ref{C.1} (below). 
Before we state the lemma we prepare the notations. 
Let $X$ be a projective K3 surface and take an ample line bundle $L$. 
For the line bundle $L$ we define the subset $V_L^{>0}$ of $V(X)$ by
\[
V_L^{>0} := \{ \sigma_{(\beta, \omega)} \in V(X)_L |  
(L^2 -\beta L) \leq \frac{\omega ^2}{2}  \}.
\]
The following lemma is essentially contained in Theorem \ref{A.6}. 
However we write down the lemma to make it much easier to use Theorem \ref{A.6}. 

\begin{lem}\label{C.1}
Notations are being as above. 
The set $V_L^{>0}$ is contained in $T_{L*}U(X)$. 
\end{lem}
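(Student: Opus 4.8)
The plan is to invoke Proposition \ref{6.1} with the equivalence $\Phi = T_L : D(X) \to D(X)$ (so that $Y = X$). Since $V_L^{>0} \subset V(X) \subset U(X)$, that proposition reduces the claim to the following statement: for every $\sigma = \sigma_{(\beta,\omega)} \in V_L^{>0}$, the object $T_L(\mca O_x)$ is $\sigma$-stable and its phase is independent of the closed point $x \in X$. The phase part is automatic, because the spherical twist $T_L$ acts on $\mca N(X)$ by the fixed reflection $v \mapsto v + \< v, v(L)\> v(L)$; hence $v(T_L(\mca O_x))$ does not depend on $x$, so all the $T_L(\mca O_x)$ share one central charge and, once they are stable, one common phase. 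Thus everything reduces to the $\sigma$-stability of a single object $T_L(\mca O_x)$.

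First I would identify this object explicitly. Since $\mca O_x$ is a skyscraper and $L$ is a line bundle, $\bb R\Hom_X(L,\mca O_x) = \bb C$ concentrated in degree $0$, so the defining triangle of the twist degenerates to
\[
L \longrightarrow \mca O_x \longrightarrow T_L(\mca O_x) \longrightarrow L[1],
\]
where $L \to \mca O_x$ is the evaluation, i.e. the restriction map. This map is surjective with kernel $I_x \otimes L$, the ideal sheaf of $x$ twisted by $L$, so $T_L(\mca O_x) \cong (I_x \otimes L)[1]$. As $\sigma$-stability is insensitive to shifts, it is enough to prove that the rank-one torsion-free sheaf $E := I_x \otimes L$ is $\sigma$-stable.

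Now I would match the hypotheses to the results of Section \ref{A}. The sheaf $E = I_x \otimes L$ is Gieseker stable with respect to $L$ (it has rank one), and $v(E) = 1 \+ L \+ \frac{L^2}{2}$, so $v(E)^2 = 0$ and $\delta_E = 1\cdot L$; thus the standing hypotheses of Theorem \ref{A.4} (with $L_0 = L$, $r_E = 1$, $n_E = 1$) hold. Because $v(E)^2 = 0$ we have $\frac{v(E)^2}{2r_E}+1 = 1$, so the numerical condition in Theorem \ref{A.4}$(2)$, namely $(\delta_E L - r_E\beta L)\big(\frac{v(E)^2}{2r_E}+1\big) = L^2 - \beta L \leq \frac{\omega^2}{2}$, is exactly the inequality defining $V_L^{>0}$. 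Hence Theorem \ref{A.4} yields the $\sigma$-stability of $E$, and therefore of $T_L(\mca O_x) = E[1]$; Proposition \ref{6.1} then gives $\sigma \in T_{L*}U(X)$, as desired.

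The main obstacle I anticipate is organizing the trichotomy built into Section \ref{A} so that it genuinely covers $V_L^{>0}$. Theorem \ref{A.4} operates on the side $\mu_\omega(E) > \beta\omega$ of the wall $V_L^0 = \{\mu_\omega(E) = \beta\omega\}$, so I must verify that the region cut out by $L^2 - \beta L \leq \frac{\omega^2}{2}$ lies on the correct side of this wall. On the wall itself, Proposition \ref{A.6} shows only that the non-locally-free sheaf $E$ is properly $\sigma$-semistable (it fails local freeness), so the delicate point is to guarantee genuine $\sigma$-stability, rather than mere semistability, throughout the interior of $V_L^{>0}$, and to confirm that the constant-phase conclusion persists as one approaches the boundary.
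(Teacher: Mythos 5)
Your core argument is precisely the paper's own proof: the paper likewise identifies $T_L(\mca O_x)\cong (L\otimes \mca I_x)[1]$, uses that $v(L\otimes\mca I_x)^2=0$ so the factor $\frac{v^2}{2r}+1$ equals $1$, applies Theorem \ref{A.4}$(2)$ to get $\sigma$-stability on $V_L^{>0}$, and concludes via the common phase together with Proposition \ref{6.1}. Up to that point the two arguments coincide.

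However, the obstacle you flag in your final paragraph is genuine, and it cannot be ``verified'' in the form you hope for: the region cut out by $L^2-\beta L\leq \omega^2/2$ does \emph{not} lie on the side $\mu_{\omega}(L\otimes\mca I_x)>\beta\omega$ of the wall. Writing $\beta=xL$, $\omega=yL$ and $L^2=2d$, the defining inequality of $V_L^{>0}$ reads $2d(1-x)\leq dy^2$, which holds vacuously for every $x\geq 1$, whereas the hypothesis of Theorem \ref{A.4} is exactly $x<1$. Moreover, on that extra locus the conclusion of the lemma actually fails. At $x=1$ the sheaf $L\otimes\mca I_x$ lies in $\mca A[-1]$ with phase $0$ and is not locally free, so it is not $\sigma$-stable by Proposition \ref{A.6}$(2)$. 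For $x>1$ all three terms of the rotated triangle $\mca O_x[-1]\to L\otimes\mca I_x\to L$ lie in $\mca A[-1]$, so $\mca O_x[-1]$ is a subobject of $L\otimes\mca I_x$ there; it has phase $0$, while $\mf{Im}Z(L\otimes\mca I_x)=2dy(1-x)<0$ forces the phase of $L\otimes\mca I_x$ into $(-1,0)$, so $L\otimes\mca I_x$ is not even $\sigma$-semistable. By Proposition \ref{6.1} such $\sigma$ do not belong to $T_{L*}U(X)$. So with the printed definition of $V_L^{>0}$ the lemma is false, and the paper's three-line proof contains the same gap: it quotes Theorem \ref{A.4} without checking its hypothesis $\mu_{\omega}(E)>\beta\omega$. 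The repair is to build the condition $\beta L< L^2$ into the definition of $V_L^{>0}$ (this is evidently what the notation is meant to suggest, and it is satisfied in the only places the lemma is used: in Proposition \ref{C.5} the twisting divisor is chosen with $\beta\omega_0<L\omega_0$, respectively $\beta=0$). Once that restriction is imposed, your argument --- and the paper's --- is complete.
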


\begin{proof}
Recall that $T_L(\mca O_x)= L \otimes \mca I_x[1]$ where $\mca I_x$ is the kernel of the evaluation map $\mca O_X \to \mca O_x$.  
If $\sigma $ is in $V_L^{>0}$ then $L \otimes \mca I_x$ is $\sigma$-stable  for all $x \in X$ by Theorem \ref{A.4}. 
Furthermore the phase of $L \otimes \mca I_x$ does not depend on $x \in X$. 
Thus we have proved the assertion. 
\end{proof}

The following lemma is also used in \cite{Kaw}. 
By using Lemma \ref{C.2}, we can see $\Phi(\mca O_y)$ is a sheaf up to shifts if an equivalence $\Phi:D(Y) \to D(X)$ satisfies the condition $\Phi_*U(Y) =U(X)$.

\begin{lem}\label{C.2}(\cite[Proposition 14.2]{Bri2}, \cite[Proposition 6.4]{Tod})
Let $X$ be a projective K3 surface, $E$ in $D(X)$ and $\sigma _{(\beta, \omega)}=(\mca A,Z ) \in V(X)$. 
We put $v(E) = r_E \+ \delta _E \+ s_E$. 

$(1)$ Assume that $r_E >0$ and $E \in \mca A$. 
If there exists a positive real number $\ell _0$ such that $E$ is $\sigma _{(\beta, \ell \omega )}$-stable for all $\ell > \ell _0$, 
then $E$ is a torsion free sheaf and is $(\beta , \omega)$-twisted stable. 

$(2)$  Assume that $r_E =0$ and $E \in \mca A$. 
If there exists a positive real number $\ell _0$ such that $E$ is $\sigma _{(\beta, \ell \omega )}$-stable for all $\ell > \ell _0$, 
then $E$ is a pure torsion sheaf. 
\end{lem}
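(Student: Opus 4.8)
The plan is to treat Lemma \ref{C.2} as a \emph{large volume limit} statement: as $\ell \to \infty$ the heart $\mca A_{(\beta, \ell \omega)}$ does not change, while the central charge degenerates in a controlled way so that $\sigma_{(\beta, \ell\omega)}$-stability converges to $(\beta,\omega)$-twisted Gieseker stability. First I would record that $\mca A_{(\beta,\ell\omega)} = \mca A_{(\beta,\omega)}$ for every $\ell>0$: scaling $\omega \mapsto \ell\omega$ multiplies both $\mu_{\omega}^{\pm}(-)$ and $\beta\omega$ by $\ell$, so the defining inequalities of $\mca T_{(\beta,\omega)}$ and $\mca F_{(\beta,\omega)}$ are preserved, and only $Z_{(\beta,\ell\omega)}$ depends on $\ell$. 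Expanding the Mukai pairing gives
\[
Z_{(\beta,\ell\omega)}(E) = \Bigl( \frac{r_E}{2}\ell^2 \omega^2 + \beta\delta_E - \frac{r_E \beta^2}{2} - s_E \Bigr) + \sqrt{-1}\,\ell\,\omega(\delta_E - r_E \beta),
\]
so that $\mf{Im}Z = \ell\,\omega(\delta_E - r_E\beta) = \ell\, r_E(\mu_\omega(E)-\beta\omega)$ and $\mf{Re}Z$ has leading term $\frac{r_E}{2}\ell^2\omega^2$. In particular a positive-rank object of $\mca A$ has phase tending to $0$, an object supported in dimension $1$ has phase tending to $1/2$, and the shift $F[1]$ of a torsion free sheaf $F \in \mca F$ has phase tending to $1$.

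Using these asymptotics I would first show that $E$ is a sheaf. From the truncation sequence $0 \to H^{-1}(E)[1] \to E \to H^0(E) \to 0$ in $\mca A$, if $H^{-1}(E)\neq 0$ then $H^{-1}(E)[1]$ is a nonzero subobject of $E$ whose phase tends to $1$, whereas the phase of $E$ stays bounded away from $1$ (it tends to $0$ in case $(1)$ and to a value $\le 1/2$ in case $(2)$). Hence the subobject would exceed $E$ in phase for all large $\ell$, contradicting stability, so $E = H^0(E)$ is a sheaf. In case $(1)$ I would then rule out a torsion subsheaf $T\subset E$: as an object of $\mca T\subset\mca A$ it is a subobject of phase $\ge 1/2$, again exceeding the phase of $E$ for large $\ell$; thus $E$ is torsion free. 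In case $(2)$ the same comparison of a zero-dimensional subsheaf (phase $1$) against $E$ (phase $\to 1/2$) shows $E$ has no zero-dimensional subsheaf, i.e. $E$ is pure.

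It remains, in case $(1)$, to identify $\sigma_{(\beta,\ell\omega)}$-stability for large $\ell$ with $(\beta,\omega)$-twisted stability. For a saturated subsheaf $A\subset E$ I would compare phases through the sign of $\mf{Im}Z(A)\,\mf{Re}Z(E) - \mf{Im}Z(E)\,\mf{Re}Z(A)$. Substituting the expansion above, the leading $\ell^3$ coefficient is proportional to $r_A r_E(\mu_\omega(A)-\mu_\omega(E))$, and when the slopes coincide the surviving $\ell^1$ coefficient is exactly the comparison of the constant terms of $p_{(\beta,\omega)}$. Thus $\arg Z(A) < \arg Z(E)$ for all large $\ell$ if and only if $p_{(\beta,\omega)}(A) < p_{(\beta,\omega)}(E)$, which is precisely twisted stability. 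One has to check that a twisted-destabilizing subsheaf really gives a subobject of $E$ in $\mca A$; since $E\in\mca T$ has $\mu_\omega^-(E)>\beta\omega$, any such destabilizer has slope $\ge \mu_\omega(E) > \beta\omega$, so after passing to a $\mu_\omega$-semistable piece of maximal slope it lies in $\mca T$ and hence is a genuine subobject in $\mca A$.

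The main obstacle I anticipate is not any single phase computation but the interchange of limits: the hypothesis only supplies stability for each fixed large $\ell$, whereas the conclusion needs a comparison that is \emph{uniform} in $\ell$. To make the asymptotic phase argument legitimate I would need a boundedness statement ensuring that the family of potential destabilizing subobjects of $E$ (in the sheaf sense and in the heart sense) stays bounded as $\ell$ varies, so that only finitely many candidate Mukai vectors occur and the signs of the $\ell^3$- and $\ell^1$-coefficients then govern the comparison for all $\ell\gg 0$ simultaneously. This finiteness—resting on the discreteness of the image of $Z$ on $\mca N(X)$ together with the local finiteness of $\sigma$—is the technical heart of the argument and is where I would expect to invest the most care.
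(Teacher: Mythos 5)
The paper itself gives no proof of Lemma \ref{C.2}: it is imported verbatim from \cite[Proposition 14.2]{Bri2} and \cite[Proposition 6.4]{Tod}, so your attempt can only be compared with those sources. Your argument is exactly the standard large-volume-limit proof used there: the heart $\mca A_{(\beta,\ell\omega)}$ is independent of $\ell$, the central charge expands as you write, and the sign of $\mf{Im}Z(A)\,\mf{Re}Z(E)-\mf{Im}Z(E)\,\mf{Re}Z(A)=a_3\ell^3+a_1\ell$ reproduces the lexicographic comparison of twisted Hilbert polynomials ($a_3$ detects the slope difference, $a_1$ the constant terms when slopes agree). The reduction of a twisted destabilizer to a genuine subobject in $\mca A$ via its maximal-slope HN piece also works: if that piece has strictly larger slope it destabilizes for $\ell\gg 0$ through the $\ell^3$ term, and if its slope equals $\mu_\omega(E)$ then the original destabilizer was already $\mu_\omega$-semistable, lies in $\mca T_{(\beta,\omega)}$, and destabilizes through $a_1$.

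Two corrections are needed. First, your closing paragraph misidentifies the technical heart: the boundedness/uniformity of destabilizers is required only for the \emph{converse} implication (twisted stable $\Rightarrow$ $\sigma_{(\beta,\ell\omega)}$-stable for $\ell\gg 0$), which is part of Bridgeland's full statement but \emph{not} of the lemma as quoted here. In the direction actually asserted, every destabilizer you must handle --- $H^{-1}(E)[1]$, the maximal torsion subsheaf, or a fixed subsheaf $A$ with $p_{(\beta,\omega)}(A)\geq p_{(\beta,\omega)}(E)$ --- is a single fixed object, and it suffices to exhibit one $\ell>\ell_0$ at which it violates stability; no interchange of limits or finiteness of walls enters. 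Second, your phase asymptotics need two small patches. In case $(1)$ you tacitly assume $\mf{Im}Z(E)>0$; this is automatic, since $\mf{Im}Z(E)=0$ would force $Z_{(\beta,\ell\omega)}(E)\in\bb R_{<0}$ for every $\ell$, contradicting $\mf{Re}Z_{(\beta,\ell\omega)}(E)=\tfrac{r_E}{2}\ell^2\omega^2+\mathrm{const}\to+\infty$, but this should be said. In case $(2)$ the subcase $\omega\delta_E=0$ is not covered by your claim that the phase of $E$ tends to a value at most $1/2$: there the phase of $E$ is identically $1$ and the comparison with $H^{-1}(E)[1]$ collapses. In that subcase one argues directly: if $H^{-1}(E)\neq 0$ it is torsion free of positive rank, so $r_{H^0(E)}=r_{H^{-1}(E)}>0$, which forces $\mf{Im}Z(H^0(E))>0$ and hence $\mf{Im}Z(E)>0$, a contradiction; therefore $E=H^0(E)$ is a torsion sheaf with $\omega\, c_1(E)=0$, i.e.\ supported in dimension $0$ and trivially pure. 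With these repairs your proof is complete for the stated direction.
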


In \cite{Kaw} the author proves that some spherical twists send sheaves to complexes in some special cases. In the following Lemma we generalize this result to arbitrary projective K3 surfaces. 

\begin{lem}\label{C.3}
Let $X$ be a projective K3 surface and let $E$ and $A$ be coherent sheaves with positive rank. We assume that $v(E)^2=0$ and $v(A)^2=-2$ and 
put $v(E) = r_E \+ \delta _E \+ s_E$ and $v(A)= r_A \+ \delta _A \+ s_A$. 

$(1)$ If $(\frac{\delta _E}{r_E}-\frac{\delta_A}{r_A})^2 \geq 0$ then $\chi (A,E)>0$. 

$(2)$ In addition to $1$, assume that $A$ is spherical and $\hom^0_X(A,E)=0$. 
Then the spherical twist $T_A(E)$ of $E$ by $A$ is a complex. 
In particular the $0$-th and $1$-st cohomologies survive. 
\end{lem}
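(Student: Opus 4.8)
The plan is to treat the two parts in turn, deriving the cohomological conclusion of part (2) from the numerical positivity established in part (1).

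For part (1), I would rewrite the Euler pairing through the Mukai pairing, using $\chi(A,E) = -\<v(A),v(E)\> = -\delta_A\delta_E + r_A s_E + r_E s_A$. The two numerical hypotheses then eliminate the $s$-coordinates: from $v(E)^2 = \delta_E^2 - 2r_E s_E = 0$ one gets $s_E = \delta_E^2/(2r_E)$, and from $v(A)^2 = \delta_A^2 - 2r_A s_A = -2$ one gets $s_A = (\delta_A^2+2)/(2r_A)$. Substituting and collecting terms, I expect the clean identity
\[
\chi(A,E) = \frac{r_A r_E}{2}\Bigl(\frac{\delta_E}{r_E} - \frac{\delta_A}{r_A}\Bigr)^2 + \frac{r_E}{r_A}.
\]
Since $r_A, r_E > 0$, the hypothesis $(\delta_E/r_E - \delta_A/r_A)^2 \geq 0$ makes the first summand non-negative while the second is strictly positive, yielding $\chi(A,E) \geq r_E/r_A > 0$.

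For part (2), I would work with the defining triangle of the spherical twist, $\bb R\Hom_X(A,E)\otimes A \xrightarrow{\mathrm{ev}} E \to T_A(E)$. Writing $P := \bb R\Hom_X(A,E)\otimes A = \bigoplus_i \Hom^i_X(A,E)\otimes A[-i]$ and using the hypothesis $\hom^0_X(A,E) = 0$ to kill the degree-zero summand, the complex $P$ has cohomology only in degrees $1$ and $2$, namely $H^1(P) = \Hom^1_X(A,E)\otimes A$ and $H^2(P) = \Hom^2_X(A,E)\otimes A$. Feeding this together with the sheaf $E$ (concentrated in degree $0$) into the long exact cohomology sequence of the triangle, I expect $T_A(E)$ to live only in degrees $0$ and $1$, with a short exact sequence $0 \to E \to H^0(T_A(E)) \to \Hom^1_X(A,E)\otimes A \to 0$ and an isomorphism $H^1(T_A(E)) \cong \Hom^2_X(A,E)\otimes A$.

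It then remains to verify that neither cohomology sheaf vanishes. Since $E$ has positive rank it is nonzero, so the inclusion $E \hookrightarrow H^0(T_A(E))$ forces $H^0(T_A(E)) \neq 0$. For the top cohomology I would invoke part (1): combining $\chi(A,E) = -\hom^1_X(A,E) + \hom^2_X(A,E)$ (using $\hom^0_X(A,E)=0$) with $\chi(A,E) > 0$ gives $\hom^2_X(A,E) > \hom^1_X(A,E) \geq 0$, hence $\Hom^2_X(A,E) \neq 0$ and therefore $H^1(T_A(E)) \neq 0$. Having nonzero cohomology in two distinct degrees shows $T_A(E)$ is a genuine complex, with the $0$-th and $1$-st cohomologies surviving. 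The only real bookkeeping hazard is getting the cohomological degrees of $P$ right and confirming that $\hom^0_X(A,E)=0$ is precisely what clears degree zero; the positivity that powers the whole argument is purely the quadratic identity of part (1), so no appeal to stability or to Serre duality is needed beyond that.
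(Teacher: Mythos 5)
Your proof is correct and follows essentially the same route as the paper: part (1) is the same algebraic identity $\chi(A,E) = \frac{r_A r_E}{2}\bigl(\frac{\delta_E}{r_E}-\frac{\delta_A}{r_A}\bigr)^2 + \frac{r_E}{r_A}$ (the paper states it divided through by $r_A r_E$), and part (2) uses the same long exact cohomology sequence of the spherical twist triangle, with $\hom^0_X(A,E)=0$ clearing degree zero and $\chi(A,E)>0$ forcing $\hom^2_X(A,E)\neq 0$. Your write-up is in fact slightly more careful than the paper's, since you make explicit the short exact sequence $0 \to E \to H^0(T_A(E)) \to \Hom^1_X(A,E)\otimes A \to 0$ and the isomorphism $H^1(T_A(E)) \cong \Hom^2_X(A,E)\otimes A$.
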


\begin{proof}
We first show the first assertion. 
Since $r_E$ and $r_A$ are positive, it is enough to show that $\frac{\chi(A,E)}{r_A r_E}$ is positive. 
We have 
\begin{eqnarray*}
\frac{\chi(A,E)}{r_A r_E} &=& - \< 1 \+ \frac{\delta _A}{r_A} \+ \frac{s_A }{r_A}  , 1\+ \frac{\delta _E}{r_E}\+ \frac{s_E}{r_E}\> \\
						&=& \frac{s_A}{r_A} + \frac{s_E }{r_E} - \frac{\delta _A \delta _E}{r_A r_E}.
\end{eqnarray*}
Since $v(A)^2 =-2$ and $v(E)^2=0$ we have 
\[
\frac{s_A}{r_A} = \frac{1}{2}\frac{\delta _A^2}{r_A^2} + \frac{1}{r_A^2} 
\mbox{ and }
\frac{s_E}{r_E} = \frac{1}{2}\frac{\delta _E^2}{r_E ^2}. 
\]
Thus we have
\begin{eqnarray*}
\frac{\chi (A,E)}{r_A r_E} &=& \frac{1}{2}\frac{\delta _A ^2}{r_A^2} + \frac{1}{r_A^2} + \frac{1}{2} \frac{\delta _E^2}{r_E^2} -\frac{\delta _A \delta _E }{r_A r_E} \\
						&=& \frac{1}{2}\Bigl( \frac{\delta _A}{r_A} -\frac{\delta _E}{r_E} \Bigr)^2 + \frac{1 }{r_A^2} >0.
\end{eqnarray*}
Thus we have proved the first assertion. 

We show the second assertion. 
By the assumption and $(1)$ of Lemma \ref{C.3} we have 
$\chi (A,E) = -\hom_X^1(A,E) + \hom_X^2(A,E) >0$. 
Hence $\hom_X^2(A,E)$ is not $0$. 
By the computing of the $i$-th cohomology $H^i$ of $T_A(E)$, we can prove the assertion. In fact we have the following exact sequence of sheaves: 
\[
\begin{CD}
@. \Hom_X^0(A,E) \otimes A@>>> E @>>> H^0 \\
@>>>\Hom^1_X(A,E) \otimes A @>>> 0 @>>> H^1\\
@>>>\Hom^2_X(A,E) \otimes A @>>> 0 . 
\end{CD}
\]
Since $\hom_X^2(A,E)$ is not $0$, we see $H^1 \neq 0$. 
Since $\hom_X^0(A,E) $ is $0$, the sheaf $H^0 $ contains $E$. 
Thus $H^0$ is not $0$. 
\end{proof}

For an equivalence $\Phi$ satisfying the condition $\Phi_{\mca E*}U(Y)=U(X)$ and for a closed point $y \in Y$, it is enough to prove $\Phi(\mca O_y) =\mca O_x[n]$ for some $x \in X$ and $n \in \bb Z$. 
By Lemma \ref{C.2}, if $\Phi_*U(Y) = U(X)$ then $\Phi(\mca O_y)$ should be a sheaf up to shifts. 
Thus we have to exclude the case $\Phi(\mca O_y)$ is a torsion free sheaf $F$ or pure torsion sheaf $T$ with $\dim \mr{Supp}(T)=1$ (up to shifts). 
If the Picard number of $X$ is one 
then it is not necessary to consider the case $\Phi(\mca O_y)=T$ with $\dim \mr{Supp}(T)=1$ since $v(\Phi(\mca O_y))^2=0$.  
We need the following lemma to exclude the case $\Phi(\mca O_y)=T$ with $\dim \mr{Supp}(T)=1$. 

\begin{lem}\label{C.4}
Let $X$ be a projective K3 surface, $E$ a pure torsion sheaf with $\dim \mr{Supp}(E)=1$ and $L$ a line bundle on $X$. 
If $\chi (L,E) < 0$ then the spherical twist $T_L(E)$ of $E$ is a sheaf containing a torsion sheaf or a properly complex. 
In particular $T_L(E)$ is not a torsion free sheaf. 
\end{lem}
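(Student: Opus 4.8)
The plan is to unwind the definition of the spherical twist and read off the cohomology sheaves of $T_L(E)$ from the long exact sequence attached to the defining triangle. With the convention fixed by Lemma \ref{C.1} (where $T_L(\mca O_x) = L \otimes \mca I_x[1]$), the twist is the cone
\[
\bb R\Hom_X(L, E) \otimes L \xrightarrow{\ ev\ } E \to T_L(E),
\]
and, since we work over a field, $\bb R\Hom_X(L,E)\otimes L$ has cohomology sheaf $\Hom_X^i(L,E)\otimes L$ in degree $i$. First I would record the vanishing $\Hom_X^2(L,E) = \Hom_X^0(E,L)^* = 0$: a nonzero map from the pure torsion sheaf $E$ to the locally free sheaf $L$ would have image simultaneously torsion and torsion free, hence is impossible, so by Serre duality $\hom_X^2(L,E)=0$. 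Combined with the hypothesis $\chi(L,E) = \hom_X^0(L,E) - \hom_X^1(L,E) < 0$, this forces $\hom_X^1(L,E) > 0$; this positivity is what ultimately produces a nonzero locally free quotient of $H^0(T_L(E))$.

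Next I would take the long exact sequence of cohomology sheaves of the triangle. Writing $V_i = \Hom_X^i(L,E)$ and $\alpha\colon V_0\otimes L \to E$ for the evaluation map, this yields $H^1(T_L(E)) = V_2\otimes L = 0$, $H^{-1}(T_L(E)) = \Ker\alpha$, and a short exact sequence of sheaves
\[
0 \to E/\Im\alpha \to H^0(T_L(E)) \to V_1\otimes L \to 0.
\]
Because $V_1\neq 0$, the sheaf $V_1\otimes L$ is a nonzero locally free sheaf, so in particular $H^0(T_L(E))\neq 0$ in every case.

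The proof then splits into two cases according to whether $\Ker\alpha$ vanishes. If $\Ker\alpha \neq 0$, then $T_L(E)$ has nonzero cohomology in both degrees $-1$ and $0$, hence is a properly complex and in particular is not a torsion free sheaf. If $\Ker\alpha = 0$, then $\alpha$ is injective, and here is the one point that needs a genuine argument: an injection $V_0\otimes L \hookrightarrow E$ would embed a line bundle into the pure $1$-dimensional torsion sheaf $E$, which is impossible since every subsheaf of $E$ is torsion. Hence $V_0 = 0$, so $\Im\alpha = 0$ and the short exact sequence reads $0\to E \to H^0(T_L(E)) \to V_1\otimes L \to 0$, with $T_L(E) = H^0(T_L(E))$ a genuine sheaf containing the nonzero torsion subsheaf $E$. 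In both cases $T_L(E)$ fails to be torsion free, which is the assertion.

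The main obstacle is the second case: one must rule out that the evaluation map could be an injective map of sheaves while $V_0\neq 0$, so that the torsion subsheaf $E/\Im\alpha$ we exhibit is genuinely nonzero. This is exactly where the purity and the $1$-dimensionality of the support of $E$ enter, forcing $\hom_X^0(L,E) = 0$ whenever $T_L(E)$ happens to be a sheaf and thereby guaranteeing that the sheaf $T_L(E)$ contains $E$ as a nonzero torsion part.
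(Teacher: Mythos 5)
Your proof is correct and follows essentially the same route as the paper: Serre duality kills $\hom_X^2(L,E)$, the hypothesis $\chi(L,E)<0$ forces $\hom_X^1(L,E)\neq 0$, and the long exact sequence of cohomology sheaves of the defining triangle yields the dichotomy. Your case split on $\Ker\alpha$ versus the paper's split on $\Hom_X^0(L,E)$ are equivalent via the same observation (a nonzero torsion-free sheaf cannot inject into a torsion sheaf), so the two arguments coincide.
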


\begin{proof}
Since $E$ is torsion and $L$ is torsion free we have $\hom^2_X(L,E)=0$ by the Serre duality. 
Thus  we have $\hom^1_X(L,E) \neq 0$ by $\chi (L,E ) < 0$. 
We can compute the $i$-th cohomology $H^i$ of $T_L(E)$ in the following way: 
\[
\begin{CD}
@. @. 0 @>>> H^{-1} \\
@>>> \Hom^0_X(L,E )\otimes L @>>> E @>>> H^0 \\
@>>> \Hom^1_X(L,E)\otimes L @>>> 0.
\end{CD}
\]
Since $\hom^1_X(L,E) \neq 0$ we see $H^0 \neq 0$. 

Suppose that $\Hom_X^0(L,E)=0$. Then $H^{-1}=0$. We can easily see $H^i=0$ if $i \neq 0$. 
Hence $T_L(E)$ is a sheaf containing the torsion sheaf $E$. 

Suppose that $\Hom_X^0(L,E)\neq 0$. Since $E$ is torsion, $H^{-1}$ is not $0$. 
Thus $T_L(E)$ is a comlex. 
\end{proof}

In Proposition \ref{C.5} and Corollary \ref{C.6}, we generalize \cite[Theorem 6.6]{Kaw}.

\begin{prop}\label{C.5}
Let $X$ be a projective K3 surface and $E$ in $D(X)$ with $v(E)^2=0$. 
We put $v(E) = r_E \+ \delta _E \+ s_E$. 

$(1)$ Suppose that $r_E\neq 0$. Then there is a $\sigma \in V(X)$ such that $E$ is not $\sigma $-stable. 

$(2)$ Suppose that $r_E=0$ and $E $ is $\sigma $-stable for all $\sigma \in V(X)$. Then $E$ is $\mca O_x[n]$ for some closed points $x \in X$ and $n \in \bb Z$. 
\end{prop}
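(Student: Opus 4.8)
The plan is to prove both parts by contradiction: assume that $E$ is $\sigma$-stable for \emph{every} $\sigma\in V(X)$ and either manufacture a contradiction (for $(1)$) or identify $E$ (for $(2)$). Throughout I may replace $E$ by a shift so that it lies in the relevant heart, since $\sigma$-stability is insensitive to shifts; so for part $(1)$ I assume $r_E>0$. First I would dispose of the easy case of $(1)$ on the wall where $\mu_{\omega}(E)=\beta\omega$: by Proposition \ref{A.6}$(2)$ an object of phase $0$ is $\sigma_{(\beta,\omega)}$-stable exactly when it is a $\mu_{\omega}$-stable locally free sheaf (and a stable object with $\mf{Im}Z(E)=0$ and $r_E>0$ must have phase $0$). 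Hence if $E$ fails to be $\mu_{\omega}$-stable locally free for even one such $\sigma_{(\beta,\omega)}$ we are already done, and I may assume that $E$ is a $\mu_{\omega}$-stable locally free sheaf for every $\omega$ with $\mu_{\omega}(E)=\beta\omega$; in particular $E$ is locally free (this is also consistent with Lemma \ref{C.2}$(1)$ along a ray $\omega\mapsto\ell\omega$).

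The heart of $(1)$ is then to destabilize such a $\mu$-stable locally free $E$ with $v(E)^2=0$ and $r_E>0$ by exploiting a nearby spherical object. Since $v(E)^2=0$ is isotropic, I would produce a spherical locally free sheaf $A$ (so $v(A)^2=-2$, $r_A>0$) whose slope is arranged so that $\bigl(\frac{\delta_A}{r_A}-\frac{\delta_E}{r_E}\bigr)^2\geq 0$ and $\hom^0_X(A,E)=0$, the vanishing being forced by the $\mu$-stability of $A$ and $E$ together with a slope inequality. Lemma \ref{C.3} then shows that the spherical twist $T_A(E)$ is a genuine two-term complex, with both $H^0$ and $H^1$ nonzero. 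On the other hand, if $E$ is $\sigma$-stable for all $\sigma\in V(X)$, then $T_A(E)$ is $T_{A*}\sigma$-stable for all such $\sigma$; tracking a large-volume ray into the region $T_{A*}V(X)$ and invoking Lemma \ref{C.2} would force $T_A(E)$ to have cohomology in a single degree, contradicting Lemma \ref{C.3}. The main obstacle here is twofold: constructing the spherical $A$ with the prescribed $\hom^0_X(A,E)=0$ and the square-positivity of the slope difference (a lattice-plus-stability argument in $\mca N(X)$, where the form on $\mr{NS}(X)_{\bb R}$ is indefinite), and controlling $T_{A*}V(X)$ well enough that a large-volume ray lands inside it so that Lemma \ref{C.2} becomes applicable.

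For part $(2)$ I would first apply Lemma \ref{C.2}$(2)$: since $E$ is $\sigma$-stable for all $\sigma\in V(X)$, along a large-volume ray $E$ is, up to shift, a pure torsion sheaf. As $r_E=0$ and $v(E)^2=0$ we get $\delta_E^2=0$, and the support of $E$ has dimension $0$ or $1$. If $\dim\mr{Supp}(E)=0$ then $\delta_E=0$ and $E$ is a length-$\ell$ sheaf concentrated at points; for such $E$ both $\mca O_x$ and $E$ have phase $1$, so any $\ell\geq 2$ gives a subsheaf $\mca O_x\subset E$ of equal phase, violating the strict inequality required for $\sigma$-stability. Hence $\ell=1$ and $E=\mca O_x[n]$, as claimed.

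The remaining, and main, obstacle in $(2)$ is to exclude $\dim\mr{Supp}(E)=1$ with $\delta_E^2=0$. Here I would choose a line bundle $L$ with $\chi(L,E)=-\<v(L),v(E)\>=s_E-c_1(L)\delta_E<0$, possible by taking $c_1(L)\delta_E$ large (legitimate since $\delta_E$ is effective). Lemma \ref{C.4} then shows that $T_L(E)$ is \emph{not} a torsion free sheaf. On the other hand, the hypothesis that $E$ is $\sigma$-stable throughout $V(X)$, combined with Lemma \ref{C.1} (which places $V_L^{>0}\subset T_{L*}U(X)$) and Lemma \ref{C.2}, should force $T_L(E)$ to be a torsion free sheaf up to shift, contradicting Lemma \ref{C.4}. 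Reconciling these two descriptions of $T_L(E)$ — that is, showing that stability everywhere really supplies the large-volume stability needed to invoke Lemma \ref{C.2} for the twisted object — is the delicate point, exactly as in part $(1)$.
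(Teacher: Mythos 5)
Your overall architecture coincides with the paper's: make $E$ a sheaf via Lemma \ref{C.2}, exhibit a spherical object whose twist turns $E$ into a genuine two-term complex (Lemma \ref{C.3}), then use stability of the twisted object in a large-volume region of the image of $U(X)$ under the twist to force it to be a sheaf (Lemma \ref{C.2}) and get a contradiction; and your part $(2)$ is essentially the paper's argument. But in part $(1)$ the two steps you flag as ``obstacles'' are not side issues --- they are the entire content of the proof, and the route you sketch for them would not go through. First, you propose to build an abstract spherical locally free sheaf $A$ of prescribed slope by ``a lattice-plus-stability argument''; no such construction is available in the paper, and none is needed. Second, and more seriously, even granted such an $A$, you have no way to ``track a large-volume ray into $T_{A*}V(X)$'': the only tool in the paper that locates explicit stability conditions inside the image of $U(X)$ under a spherical twist is Lemma \ref{C.1}, and that lemma is stated and proved only for twists $T_L$ by \emph{ample line bundles} (its proof rests on $T_L(\mca O_x)=L\otimes \mca I_x[1]$ and Theorem \ref{A.4} applied to $L\otimes \mca I_x$). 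For a general spherical sheaf $A$ there is no analogue in the paper, so Lemma \ref{C.2} can never be invoked for $T_A(E)$ and the contradiction does not materialize.

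The missing idea is that one single choice resolves both obstacles simultaneously: take the spherical object to be an ample line bundle $L$ with $c_1(L)=n\omega_0$ for $n\gg 0$, where $\sigma_{(\beta_0,\omega_0)}=(\mca A_0,Z_0)\in V(X)$ is fixed with $\omega_0$ integral and $\mf{Im}Z_0(E)>0$. After Lemma \ref{C.2}$(1)$ (applied along the ray $\sigma_{(\beta_0,\ell\omega_0)}$) makes $E$ a $(\beta_0,\omega_0)$-twisted stable, hence $\mu_{\omega_0}$-semistable, torsion free sheaf, every hypothesis of Lemma \ref{C.3} holds automatically for $n$ large: $v(L)^2=-2$ for any line bundle, $\mu_{\omega_0}(L)=n\omega_0^2>\mu_{\omega_0}(E)$ forces $\hom_X^0(L,E)=0$, $\bigl(\frac{\delta_E}{r_E}-c_1(L)\bigr)^2>0$, and $\chi(L,E)>r_E$, so that $E'=T_L(E)[1]$ has positive rank $r'=\chi(L,E)-r_E$. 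Then, choosing $\beta=bL$ with $\beta\omega_0<\min\{L\omega_0,\ \delta'\omega_0/r'\}$, the ray $\sigma_{(\beta,yL)}$ with $y\gg 0$ lies in $V_L^{>0}\subset T_{L*}U(X)$ by Lemma \ref{C.1}; since $E$ is $\sigma$-stable on all of $V(X)$, hence on all of $U(X)$ by the $\tilde{GL}^+(2,\bb R)$-invariance of stability, $E'$ is $\sigma_{(\beta,yL)}$-stable with $\mf{Im}Z_y(E')>0$, and Lemma \ref{C.2}$(1)$ forces it to be a torsion free sheaf, contradicting the surviving cohomologies $H^0$ and $H^1$. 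The same remark applies to your part $(2)$: the line bundle with $\chi(L,E)<0$ must be taken \emph{ample}, precisely so that Lemma \ref{C.1} applies; with that proviso your argument there is the paper's.
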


\begin{proof}
Let us prove the first assertion $(1)$. 
Suppose to the contrary that $E$ is $\sigma $-stable for all $\sigma \in V(X)$. 
Since $r_E\neq 0 $ we can assume $r_E >0$ by a shift if necessary. 
We choose a stability condition $\sigma_{(\beta_0 , \omega_0)}=(\mca A_0, Z_0) \in V(X)$ so that 
$\frac{\delta _E\omega _0}{r_E} > \beta_0 \omega _0$ and $\omega _0$ is an integral class. 
Since $\frac{\delta _E\omega _0}{r_E} > \beta_0 \omega _0$ the imaginary part $\mf{Im} Z_0(E)$ of $Z_0(E) $ is positive.  
Hence there is an even integer $2m$ such that $E[2m]$ is in $\mca A_0$. 
Thus we rewrite $E$ instead of $E[2m]$. Note that $E$ is in $\mca A_0$ and $r_E$ is positive. 

We consider the following one parameter family of stability conditions 
\[
\{ \sigma _{\ell} := \sigma _{(\beta _0, \ell \omega _0)} \in V(X) | \ell \in \bb R_{>>0} \}. 
\]
We put $\sigma _{\ell} =(\mca A_{\ell },Z_{\ell})$. 
By (1) of Lemma \ref{C.2}, $E$ is a $(\beta _0 , \omega _0)$-twisted stable torsion free sheaf. 

We choose am ample line bundle $L$ satisfying the following condition:
\begin{enumerate}
\item $c_1(L) = n \omega _0$ where $n$ is a positive integer. 
\item $\mu_{\omega _0}(L) > \mu_{\omega _0}(E)$. 
\item $(\frac{\delta _E}{r_E}-L)^2 >0$. 
\item $r_E -\chi (L,E) < 0$. 
\end{enumerate}
This choice is possible if we take a sufficiently large $n$. 
Since $E$ is twisted stable, $E$ is $\mu$-semistable with respect to $\omega _0$. Thus $\hom_X^0(L,E) =0$ by the second condition for $L$.  
Hence $T_L(E)$ is a complex by Lemma \ref{C.3}. 
In particular the $0$-th and $1$-st cohomologies survive. 

Now we put $E' = T_L(E)[1]$ and $v(E') = r' \+ \delta ' \+ s'$. 
Since $r' = \chi (L,E) -r_E$, $r'$ is positive. 
We choose a divisor $\beta $ so that 
\[
\beta = b L \ (b\in \bb R) \mbox{ and }\beta \omega _0 < \min\{L \omega _0, \frac{\delta ' \omega _0}{r'}  \}. 
\]
We consider the following family of stability conditions:
\[
\{ \sigma _{y}:= \sigma _{(\beta , y L_0)} \in V_L^{>0} | L_0^2 -\beta L_0 \leq   \frac{(yL_0)^2}{2}  \}. 
\]
We put $\sigma _y =(Z_y, \mca P_y)$. 
By Lemma \ref{C.1}, a stability condition $\sigma _y$ is in $(T_L)_*U(X)$. 
Since $E$ is $\tau$-stable for all $\tau \in U(X)$, the object $E'$ is $(T_L)_*\tau$-stable. 
Thus $E'$ is $\sigma _y$-stable since $\sigma_y $ is in $T_{L*}U(X)$. 
By the choice of $\beta$ we have $\mf{Im}Z_y(E')>0$. 
Hence $E'$ should be a torsion free sheaf up to shifts by $(1)$ of Lemma \ref{C.2}. 
This contradicts the fact that two cohomologies of $E'$ survive. 

Let us prove the second assertion $(2)$. 
We choose an arbitrary stability condition $\sigma _{(\beta _0, \omega _0)}=(\mca A_0, Z_0) \in V(X)$ and fix it. 
Since $E$ is $\sigma _{(\beta_0, \omega _0)}$-stable we can assume that $E$ is in $\mca A_0$ by shifts if necessary. 
By taking a limit $\omega _0 \to \infty $ we see that $E$ is a pure torsion sheaf by $(2)$ of Lemma \ref{C.2}. 

We shall show $\delta _E =0$. Suppose to the contrary that $\delta _E \neq 0$. 
Then $\delta _E L$ is positive for any ample line bundle $L$. 
Thus there is a sufficiently ample line bundle $L_0$ such that $\chi (L_0, E) < 0$. 
Here we put $v(T_{L_0}(E)) = r \+ \delta \+ s$. 
Since $r= -\chi (L_0, E)$, we see $r> 0$.  
Similarly to $1$ we consider the following family of stability conditions
\[
\{ \sigma _y := \sigma_{(0, y L_0)}=(\mca A_y, Z_y) | L_0^2 \leq \frac{(y L_0)^2}{2} \}. 
\]
Since $\mu_{L_0}(L_0) = L_0^2 >0$, $\sigma _y$ is in $(T_{L_0})_*U(X)$ by Lemma \ref{C.1}. 
Moreover we have 
\[
\frac{\delta L_0}{r} = \frac{\delta _E -\chi (L_0,E)L_0}{r}L_0>0. 
\]
Thus $\mf{Im}Z_y(T_{L_0}(E)) >0$. 
Hence we can assume that $T_{L_0}(E)$ is in $\mca A_y$ up to even shifts. 
By $(1)$ of Lemma \ref{C.2} $T_{L_0}(E)$ should be a torsion free sheaf. 
This contradicts Lemma \ref{C.4}. Thus we have $\delta _E =0$. 

Since $\delta _E=0$, $E$ is a pure torsion sheaf with $\dim \mr{Supp}(E) =0$. 
Since $E$ is $\sigma $-stable we have $\hom^0_X(E,E) =1$. 
Thus $E$ is a length $1$ torsion sheaf up to shifts. 
We have proved the assertions. 
\end{proof}

\begin{cor}\label{C.6}
Let $X$ be a projective K3 surface and $E$ in $D(X)$ with $v(E)^2=0$. 
If $E$ is $\sigma $-stable for all $\sigma \in V(X)$ then 
$E$ is $\mca O_x[n]$ for some $x \in X$ and $n \in \bb Z$. 
\end{cor}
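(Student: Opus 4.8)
The plan is to deduce this corollary directly from Proposition \ref{C.5}, splitting on whether the rank $r_E$ of $E$ is zero. Writing $v(E) = r_E \+ \delta_E \+ s_E$, I would first dispose of the case $r_E \neq 0$ by a contrapositive argument. If $r_E \neq 0$, then Proposition \ref{C.5}$(1)$ guarantees the existence of some $\sigma \in V(X)$ for which $E$ fails to be $\sigma$-stable. This directly contradicts the standing hypothesis that $E$ is $\sigma$-stable for \emph{all} $\sigma \in V(X)$. Hence the case $r_E \neq 0$ is impossible, and we are forced to have $r_E = 0$.

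Having established $r_E = 0$, the conclusion is then immediate: the hypotheses of Proposition \ref{C.5}$(2)$ are exactly ``$r_E = 0$ and $E$ is $\sigma$-stable for all $\sigma \in V(X)$,'' which are now both satisfied. Applying that part of the proposition yields $E = \mca O_x[n]$ for some closed point $x \in X$ and some $n \in \bb Z$, which is precisely the desired statement.

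In this sense the corollary is a formal repackaging of Proposition \ref{C.5}: part $(1)$ rules out positive (hence, after a shift, nonzero) rank, and part $(2)$ identifies the rank-zero stable-everywhere objects as shifts of skyscraper sheaves. I do not anticipate any genuine obstacle here, since all of the real work has already been carried out in the proof of Proposition \ref{C.5}; the only point to be careful about is simply that the two parts of that proposition together exhaust the dichotomy $r_E \neq 0$ versus $r_E = 0$, so that combining them leaves no remaining case. The apparent ``main difficulty,'' namely excluding the possibility that $E$ is a torsion-free sheaf or a one-dimensional pure torsion sheaf up to shift, is handled internally by Proposition \ref{C.5} via Lemmas \ref{C.2}, \ref{C.3}, and \ref{C.4}, so it need not be revisited at this stage.
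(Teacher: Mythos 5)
Your proposal is correct and follows exactly the paper's own argument: rule out $r_E \neq 0$ by Proposition \ref{C.5}$(1)$, then apply Proposition \ref{C.5}$(2)$ in the remaining case $r_E = 0$. Nothing is missing; the corollary is indeed just this two-case repackaging of the proposition.
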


\begin{proof}
We put $v(E) = r_E \+ \delta_E \+ s_E$. If $r_E \neq 0$ then this contradicts $(1)$ of Proposition \ref{C.5}. Hence $r_E=0$. 
The assertion follows from $(2)$ of Proposition \ref{C.5}. 
\end{proof}

\begin{thm}\label{C.7}
Let $X$ and $Y$ be projective K3 surfaces and $\Phi : D(Y) \to D(X)$ an equivalence. 
If $\Phi _* U(Y) = U(X)$ then $\Phi$ can be written by 
\[
\Phi (-) = L\otimes f_*(-) [n]
\]
where $L$ is a line bundle on $X$, $f$ is an isomorphism $f:Y\to X$ and $n \in \bb Z$. 
\end{thm}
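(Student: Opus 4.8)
The plan is to reduce the statement to understanding the images $\Phi(\mca O_y)$ of the skyscraper sheaves and then invoke a classical structure theorem for equivalences. First I would observe that, since $\Phi$ is an equivalence, it preserves the Mukai pairing, so $v(\Phi(\mca O_y))^2 = v(\mca O_y)^2 = 0$ for every closed point $y \in Y$. The hypothesis $\Phi_* U(Y) = U(X)$ means in particular that every $\sigma \in U(X)$ lies in $\Phi_* U(Y)$; by Proposition \ref{6.1} this is equivalent to the assertion that $\Phi(\mca O_y)$ is $\sigma$-stable with a phase independent of $y$. Since $V(X) \subset U(X)$, the object $\Phi(\mca O_y)$ is therefore $\sigma$-stable for every $\sigma \in V(X)$.

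With these two facts in hand---namely $v(\Phi(\mca O_y))^2 = 0$ and $\sigma$-stability for all $\sigma \in V(X)$---Corollary \ref{C.6} applies directly and yields $\Phi(\mca O_y) = \mca O_{x}[n_y]$ for some closed point $x = x(y) \in X$ and some integer $n_y$. Next I would check that $n_y$ does not depend on $y$: fixing a single $\sigma \in U(X)$, all $\mca O_{x(y)}$ are $\sigma$-stable with one common phase by the very definition of $U(X)$, while the phase of $\mca O_{x(y)}[n_y]$ is that phase shifted by $n_y$; since the phases of the $\Phi(\mca O_y)$ agree, the integer $n_y$ must be constant, say $n$. Writing $f(y) := x(y)$ produces a set-theoretic map $f : Y \to X$ with $\Phi(\mca O_y) = \mca O_{f(y)}[n]$ for all $y$.

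Finally I would replace $\Phi$ by $\Phi[-n]$, so that the shifted equivalence sends each skyscraper sheaf $\mca O_y$ to the skyscraper sheaf $\mca O_{f(y)}$. At this point the classical structure theorem for Fourier--Mukai equivalences carrying points to points---the same result \cite[Corollary 5.23]{Huy2} already used in the proof of Theorem \ref{B.4}---shows that $f$ is an isomorphism of K3 surfaces and that $\Phi[-n](-) = L \otimes f_*(-)$ for some line bundle $L$ on $X$, whence $\Phi(-) = L \otimes f_*(-)[n]$.

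The genuinely substantive input is Corollary \ref{C.6}, which has already been established; within the present argument the only points requiring care are the verification that $\Phi(\mca O_y)$ really is $\sigma$-stable for all $\sigma \in V(X)$ (immediate from Proposition \ref{6.1} and the inclusion $V(X) \subset U(X)$) and the identification of the common shift $n$ via the common-phase condition. The invocation of the structure theorem is then routine, so I expect no serious obstacle beyond correctly assembling these ingredients.
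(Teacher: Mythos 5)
Your proposal is correct and follows essentially the same route as the paper: reduce to $\sigma$-stability of $\Phi(\mca O_y)$ for all $\sigma \in V(X)$ via Proposition \ref{6.1} (the paper phrases this through the $\tilde{GL}^+(2,\bb R)$-invariance of stability, which amounts to the same thing), apply Corollary \ref{C.6} to get $\Phi(\mca O_y)=\mca O_x[n]$, and conclude with \cite[Corollary 5.23]{Huy2}. Your explicit verification that the shift $n$ is independent of $y$ is a detail the paper leaves implicit, but it does not change the argument.
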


\begin{proof}
Take an element $\sigma \in \Stab (X)$. 
By the definition of $\tilde {GL}^+ (2,\bb R)$ action we see that an object $E$ is $\sigma$-stable if and only if $E$ is $\sigma \tilde g$-stable for all $\tilde g \in \tilde {GL}^+(2, \bb R)$. 
Hence if $\Phi _*U(Y) = U(X)$ then $\Phi (\mca O_y)$ is written by $\mca O_x[n]$ for some $x \in X$ and $n \in \bb Z$ by Corollary \ref{C.6}. 
Then the assertion follows from \cite[Corollary 5.23]{Huy2}. 
\end{proof}

Then we immediately obtain the following corollary. 

\begin{cor}\label{9}
We put 
\[
\Aut (D(X), U(X)):= \{ \Phi \in \Aut (D(X)) | \Phi _*U(X) =U(X) \}. 
\]
Then $\Aut (D(X), U(X)) = (\Aut (X)\ltimes \mr{Pic}(X) )\times \bb Z [1]$. 
\end{cor}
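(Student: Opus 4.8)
The plan is to deduce the corollary directly from Theorem \ref{C.7} by specializing to $Y = X$, and then to verify that the resulting functors really do form the claimed group. Setting $Y = X$ in Theorem \ref{C.7}, any element of $\Aut(D(X), U(X))$ — that is, any autoequivalence $\Phi$ with $\Phi_* U(X) = U(X)$ — must be of the form $\Phi(-) = L \otimes f_*(-)[n]$ with $L \in \mathrm{Pic}(X)$, $f \in \Aut(X)$ and $n \in \bb Z$. This gives one containment immediately.

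For the reverse containment I would check that every functor $\Phi(-) = L \otimes f_*(-)[n]$ indeed satisfies $\Phi_* U(X) = U(X)$. The key observation is that $\Phi(\mca O_y) = L \otimes \mca O_{f(y)}[n] \cong \mca O_{f(y)}[n]$, since tensoring a skyscraper sheaf by a line bundle returns the same skyscraper. Hence for any $\sigma \in U(X)$ the object $\Phi(\mca O_y)$ is $\sigma$-stable with a phase independent of $y$, because the $\mca O_x$ are $\sigma$-stable with a common phase by the definition of $U(X)$ and a shift only adds an integer to the phase. Proposition \ref{6.1} then yields $U(X) \subseteq \Phi_* U(X)$. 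Applying the same argument to $\Phi^{-1}$, which is again of the form $(f^{-1})_*(L^{-1}) \otimes (f^{-1})_*(-)[-n]$, gives $U(X) \subseteq \Phi^{-1}_* U(X)$, i.e. $\Phi_* U(X) \subseteq U(X)$; combining the two gives equality.

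It then remains to compute the group structure. A direct composition shows
\[
\bigl( L \otimes f_*(-)[n] \bigr) \circ \bigl( M \otimes g_*(-)[m] \bigr) = \bigl( L \otimes f_*(M) \bigr) \otimes (f \circ g)_*(-)[n+m],
\]
using $f_*(M \otimes g_*(-)) = f_*(M) \otimes (f\circ g)_*(-)$ together with $f_* \circ g_* = (f\circ g)_*$. Thus the assignment $\Phi \mapsto (L, f, n)$ carries composition to the product rule $(L,f,n)\cdot(M,g,m) = (L\otimes f_*(M),\, f\circ g,\, n+m)$. The shift coordinate $n$ is central and additive, giving the factor $\bb Z[1]$, while the remaining coordinates give the semidirect product $\Aut(X) \ltimes \mathrm{Pic}(X)$ in which $\Aut(X)$ acts on $\mathrm{Pic}(X)$ by $f \cdot M = f_*(M)$. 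Injectivity of the assignment is clear, since $\Phi(\mca O_x) = \mca O_{f(x)}[n]$ recovers $f$ and $n$ and then $\Phi(\mca O_X) = L[n]$ recovers $L$.

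The whole computation is essentially routine once Theorem \ref{C.7} is available; the only points requiring genuine care are the reverse containment, which I handle by invoking Proposition \ref{6.1} for both $\Phi$ and $\Phi^{-1}$, and the verification that the bijection onto the parameter set is a group isomorphism with the correct semidirect product action rather than a mere set-theoretic bijection. I expect the reverse containment to be the main obstacle, since it is where one must actually use the structure of $U(X)$ rather than simply transporting the conclusion of the main theorem.
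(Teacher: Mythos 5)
Your proposal is correct and follows exactly the route the paper intends: the paper derives the corollary "immediately" from Theorem \ref{C.7} applied with $Y=X$, leaving the reverse containment (which you handle via Proposition \ref{6.1} applied to $\Phi$ and $\Phi^{-1}$) and the group-law computation as routine verifications. Your write-up simply makes these implicit steps explicit, and all of them check out.
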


\noindent
{Graduate School of Mathematics, Osaka University, Osaka 563-0043, Japan};
{e-mail: kawatani@cr.math.sci.osaka-u.ac.jp}

\end{document}